
\documentclass{amsart}


\usepackage[english]{babel}

\usepackage{mathrsfs, mathtools, amssymb}

\usepackage{dsfont}

\usepackage{tikz, tikz-cd}
\usepackage[paper=a4paper, margin=2cm]{geometry}

\usepackage{hyperref}
\hypersetup{
    unicode=false,          
    pdftoolbar=true,        
    pdfmenubar=true,        
    pdffitwindow=false,     
    pdfstartview={FitH},    
    pdftitle={Cohomology Rings of Quasitoric bundles},    
    pdfauthor={Khovanskii, A. G.; Limonchenko, I.; Monin, L.},     
    pdfkeywords={Quasitoric bundles} {Quasitoric manifolds} {moment-angle-complexes}  {Stanley-Raisner rings}, 
    pdfnewwindow=true,      
    colorlinks=true,       
    linkcolor=blue,          
    citecolor=red,        
    filecolor=magenta,      
    urlcolor=cyan           
}


\hfuzz=5.002pt


\theoremstyle{plain}
\newtheorem{theorem}{Theorem}[section]
\newtheorem{lemma}[theorem]{Lemma}
\newtheorem{proposition}[theorem]{Proposition}
\newtheorem{corollary}[theorem]{Corollary}

\theoremstyle{definition}
\newtheorem{definition}[theorem]{Definition}
\newtheorem{remark}[theorem]{Remark}
\newtheorem{example}[theorem]{Example}


\newcommand{\rleft}{\mathopen{}\mathclose\bgroup\left}
\newcommand{\rright}{\aftergroup\egroup\right}

\newcommand{\C}{{\mathbb{C}}}
\newcommand{\K}{{\mathbb{K}}}
\newcommand{\R}{{\mathbb{R}}}
\newcommand{\Z}{{\mathbb{Z}}}
\newcommand{\p}{{\mathbb{P}}}
\newcommand{\Q}{{\mathbb{Q}}}

\newcommand{\Pm}{{\mathcal{P}}}

\newcommand{\cL}{{\mathcal{L}}}

\newcommand{\Exp}{{\mathrm{Exp}}}
\newcommand{\Ann}{{\mathrm{Ann}}}
\newcommand{\BKK}{BKK }
\newcommand{\diff}{\mathop{}\!d}
\newcommand{\Diff}{{\mathrm{Diff}}}

\newcommand{\Hom}{{\mathrm{Hom}}}
\newcommand{\lspan}{{\operatorname{span}}}

\newcommand{\PP}{\mathrm{PP}}
\newcommand{\pt}{{\mathrm{pt}}}

\newcommand{\Sym}{{\mathrm{Sym}}}
\newcommand{\Vol}{{\mathrm{Vol}}}
\newcommand{\s}{{\mathrm{sign}}}


\begin{document}
\selectlanguage{english}


\title[Cohomology rings of quasitoric bundles]{Cohomology rings of quasitoric bundles}

\author{Askold Khovanskii}
\address[A.\,Khovanskii]{Department of Mathematics, University of Toronto, Toronto, Canada; Moscow Independent University, Moscow, Russia.}
\email{askold@math.utoronto.ca}
\author{Ivan Limonchenko}
\address[I.\,Limonchenko]{Department of Mathematics, University of Toronto, Toronto, Canada; Faculty of Computer Science, HSE University, Moscow, Russia}
\email{ilimonchenko@hse.ru}
\author{Leonid Monin}
\address[L.\,Monin]{Max Planck Institute for Mathematics in the Sciences, Leipzig, Germany}
\email{leonid.monin@mis.mpg.de}

\subjclass[2020]{57S12, 13F55, 55N45}
\keywords{Quasitoric manifolds, quasitoric bundles,  moment-angle-complexes, Stanley-Reisner rings}

\begin{abstract}
  The classical \BKK theorem computes the intersection number of divisors on toric variety in terms of volumes of corresponding polytopes. In \cite{KP}, it was observed by Pukhlikov and the first author that the \BKK theorem leads to a presentation of the cohomology ring of toric variety as a quotient of the ring of differential operators with constant coefficients by the annihilator of an explicit polynomial.
  
  In this paper we generalize this construction to the case of quasitoric bundles. These are fiber bundles with generalized quasitoric manifolds as fibers. First we obtain a generalization of the \BKK theorem to this case. Then we use recently obtained descriptions of the graded-commutative algebras which satisfy Poincar\'e duality to give a description of cohomology rings of quasitoric bundles.
\end{abstract}

\maketitle


\section{Introduction}
\label{sec:intro}

In this paper we study cohomology rings of quasitoric bundles, a class of fiber bundles over a closed orientable smooth manifold with a generalized quasitoric manifold as a fiber. A generalized quasitoric manifold is a generalization of a toric variety to smooth category, hence the first examples of a quasitoric bundle are a nonsingular complete toric variety and a quasitoric manifold, both viewed as fiber bundles over a point. 

{\bf Description of cohomology rings of a toric variety.}
Let us start with a brief recollection of the classical toric case. The cohomology ring of a smooth toric variety has several descriptions. In this paper we mostly focus on the following three descriptions: Stanley-Reisner, Brion's and Pukhlikov-Khovanskii.

Let $T\simeq (\C^*)^n$ be an algebraic torus with character lattice $M$ and lattice of one-parameter subgroups $N$.
Further, let $X_\Sigma$ be a smooth projective toric variety given by a fan $\Sigma \subseteq N_\R\coloneqq N \otimes_\Z \R$.
Denote the rays of $\Sigma$ by $\Sigma(1) = \{ \rho_1,\ldots,\rho_s\}$ and their primitive generators in $N$ by $e_1,\ldots, e_s$.
Recall the following well-known description of the cohomology ring of $X_\Sigma$ (see, for instance, \cite[Theorem 12.4.1]{tor-var}):
\[
  H^*(X_\Sigma, \R)\simeq \R[x_1,\ldots, x_s]/ (I + J) \eqqcolon R_\Sigma,
\]
where $I$ is generated by monomials $x_{i_1}\cdots x_{i_t}$ such that $\rho_{i_1},\ldots, \rho_{i_t}\in \Sigma(1)$ are distinct and do not form a cone in $\Sigma$ and $J = \rleft\langle \sum_{i=1}^s \chi(e_i)x_i \colon \chi \in M \rright\rangle$.
Note that $I$ depicts the Stanley-Reisner ideal of the fan $\Sigma$ and therefore we refer to this description as the \emph{Stanley-Reisner description} of $H^*(X_\Sigma,\R)$.

On the other hand, Brion used explicit localization theorem in equivariant cohomology to compute the cohomology ring of a smooth toric variety. Let $M$ be the character lattice of the torus $T$ and set $M_\R = M \otimes \R$ and $N_\R = \Hom_\Z(M, \R)$.
Let $\Sigma$ be a smooth projective fan in $N_\R$.
A map $f \colon N_\R \to \R$ is \emph{piecewise polynomial} if for any $\sigma \in \Sigma$, the map $f|_\sigma \colon \sigma \to \R$ extends to a polynomial function on the linear space $\lspan_\R\{ \sigma\}$, i.e. a piecewise polynomial function $f$ on $\Sigma$ is a collection of compatible polynomial functions $f_\sigma \colon \sigma \to \R$. 

Let us denote by $\PP_\Sigma$ the ring of piecewise polynomial functions on $\Sigma$ with respect to pointwise addition and multiplication. Note that the character lattice $M$ naturally injects in $\PP_\Sigma$ as global linear functions on $N_\R$. Brion shows that one has the following description of the cohomology ring of toric variety:
\[
 H^*(X_\Sigma, \R)\simeq \PP_\Sigma/\langle M\rangle,
\]
where $\langle M\rangle$ is the ideal generated by $M\subset \PP_\Sigma$.

Given line bundles $L_1, \ldots, L_t$ on $X_\Sigma$, one can directly compute a top degree cup product $c_1(L_1)^{k_1} \cdots c_1(L_t)^{k_t}$ in $H^*(X_\Sigma,\R)$ by using the \BKK theorem \cite{Kouchnirenko} (see also \cite{Bernstein, BKK}).
Here, $c_1(L_i) \in H^2(X_\Sigma,\R)$ denotes the first Chern class of $L_i$.
More precisely, as any line bundle on $X_\Sigma$ is the difference of two ample line bundles, it suffices to evaluate products $c_1(L_1)^{k_1} \cdots c_1(L_t)^{k_t}$ with all $L_i$ ample.
As is well-known in toric geometry, the ample line bundles $L_i$ correspond to polytopes $P_i$ whose normal fans coarsen the fan $\Sigma$.
Using the \BKK Theorem, we obtain
\[
  c_1(L_1)^{k_1} \cdots c_1(L_t)^{k_t} = n! \cdot V(\underbrace{P_1,\ldots, P_1}_{k_1\;\text{times}}, \ldots, \underbrace{P_t, \ldots, P_t}_{k_t\;\text{times}})
\]
where $V(P_1, \ldots, P_1, \ldots, P_t, \ldots, P_t)$ denotes the mixed volume of the $n$-tuple $(P_1, \ldots, P_1, \ldots, P_t, \ldots, P_t)$.
In \cite[Section 1.4]{KP}, Pukhlikov and the first author observed that the information on these cup products suffices to regain a description of the cohomology ring $H^*(X_\Sigma, \R)$. More precisely, they showed that
\[
H^*(X_\Sigma, \R)\simeq \Diff(\Pm_\Sigma)/\Ann(\Vol),
\]
where $\Diff(\Pm_\Sigma)$ is the ring of differential operators with constant coefficients on the space of virtual polytopes associated with $\Sigma$ and $\Ann(\Vol)$ is the ideal of differential operators which annihilate the volume polynomial.
As virtual polytopes play a crucial role in this description, we refer to it as the \emph{virtual polytope description} of the cohomology ring $H^*(X_\Sigma, \R)$.

{\bf Cohomology rings of quasitoric bundles.} Now let $T\simeq (S^1)^n$ be a compact torus. A quasitoric manifold is a smooth $2n$-dimensional manifold with a smooth effective locally standard $T$-action such that the orbit space is diffeomorphic to a simple polytope $P$. This notion was introduced as a topological counterpart of a nonsingular projective toric variety (or, toric manifold) with the restricted action of $(S^1)^n\subset (\C^*)^n$ in the seminal paper~\cite{davis1991convex}. Quasitoric manifolds have been studied intensively since 1990s and have already found various applications in homotopy theory~\cite{Choi2008QuasitoricMO,hasui2016p,hasui2017p}, unitary~\cite{Buchstaber2006SpacesOP,Lu2016EXAMPLESOQ} and special unitary bordism~\cite{lu2016toric,Limonchenko2017CalabiYH}, hyperbolic geometry~\cite{Buchstaber2016OnMD,Buchstaber2017CohomologicalRO,Baralic2020ToricOA}, and other areas of research. 

Quasitoric manifolds acquire smooth structures as orbit spaces of moment-angle manifolds by freely acting tori of maximal possible rank~\cite{buchstaber2002torus,Buchstaber2006SpacesOP}. In Section 2, we define a generalized quasitoric manifold to be a partial quotient obtained as an orbit space of a moment-angle-complex over a starshaped sphere by a freely acting torus of the maximal possible rank. It is a locally standard torus manifold~\cite{Masuda1999UnitaryTM,multifan,panovmasuda} with a poset of characteristic submanifolds being isomorphic to a complete simplicial fan.

Similar to quasitoric manifolds, generalized quasitoric manifolds have a combinatorial description: an omnioriented generalized quasitoric manifold is uniquely determined by a pair $(\Sigma,\Lambda)$, where $\Sigma$ is a complete simplicial fan and $\Lambda\colon \Sigma(1)\to N$ is a map such that $\Lambda(\rho_1),\ldots,\Lambda(\rho_l)$ can be extended to a basis of $N$, whenever $\rho_1,\ldots,\rho_l\in \Sigma(1)$ form a cone. 

In particular, for a smooth complete fan $\Sigma\subset N$ and $\Lambda(\rho)=e_\rho$, the primitive ray generator, the corresponding generalized quasitoric manifold is equivariantly diffeomorphic to a nonsingular complete toric variety $X_\Sigma$ (considered with the restricted action of $(S^1)^n\subset (\C^*)^n$).

For a principal $T$-bundle $p \colon E \to B$ over a closed orientable real manifold $B$ and a generalized quasitoric manifold $X_{\Sigma,\Lambda}$, let $E_{\Sigma,\Lambda} \coloneqq (E\times X_{\Sigma,\Lambda} / T)$ be the associated quasitoric bundle (see Section~\ref{sec:general} for details).

A particular example of quasitoric bundles is formed by toric bundles, where we consider associated bundles $E_\Sigma \coloneqq (E\times X_{\Sigma} / T)$ coming from smooth complete toric varieties. A generalization of the Stanley-Reisner type description for the cohomology ring $H^*(E_\Sigma,\R)$ was obtained by Sankaran and Uma \cite{US03}. In \cite{roch}, it was noticed that Brion's description also generalizes to the case of toric bundles. Finally, in \cite{hof2020} a generalization of the virtual polytope description was obtained for toric bundles with smooth projective fibers. Descriptions of cohomology rings obtained in \cite{roch} and \cite{hof2020} showed to be useful for computation of the ring of conditions of horospherical homogeneous spaces. Such a computation involves taking the direct limit of cohomology rings of toric bundles with the same base, which is descriptions of \cite{roch,hof2020} are more suitable.

On the other side, in \cite{davis1991convex} the Stanley-Reisner description was extended to the cohomology rings of quasitoric manifolds and in \cite{dasgupta2019cohomology} it was further extended to the case of quasitoric bundles. Moreover, in~\cite{ayzenberg2016volume} the cohomology rings of quasitoric manifolds were described via volume polynomial on the space of analogous  multi-polytopes. 

In this paper, we complete this picture and obtain a generalization of all the three descriptions of the cohomology ring of a smooth toric variety to the case of quasitoric bundles. In particular, we extend the results of \cite{hof2020} to the case of toric bundles with complete (but not necessarily projective) fibers and give an independent proof of the results of \cite{dasgupta2019cohomology} in the quasitoric bundle case.

Our main tool in the computation of cohomology rings is a topological version of \BKK theorem for quasitoric bundles which computes top-degree intersection numbers of cohomology classes of $E_{\Sigma,\Lambda}$ (Theorem~\ref{BKK}).

More precisely, we reduce the computation of intersection numbers on the quasitoric bundle to intersection numbers on the base.
This provides a ``\BKK-type theorem'' for any choice of a cohomology class in the base $\gamma \in H^*(B, \R)$.

We also provide an equivalent version of the generalized \BKK Theorem in Section 3. It can be made as follows.
Suppose the torus $T$ has rank $n$ and the real dimension of $B$ is $k$.
Similar to the toric case, a multi-polytope $\Delta$ defines a cohomology class $\rho(\Delta)\in H^2(E_{\Sigma,\Lambda},\R)$ on the quasitoric bundle $p \colon E_{\Sigma,\Lambda}\to B$. For any given $j$, we define a map which associates to a multi-polytope $\Delta$ a cohomology class $\gamma_{2j}(\Delta)\in H^{2j}(B,\R)$ such that
\[
  \rho(\Delta)^{n+j} \cdot p^*(\gamma) = \gamma_{2j}(\Delta) \cdot \gamma \text{,}
\]
for any $\gamma\in H^{k-2j}(B,\R)$.
Here ``$\cdot$'' denotes the cup product on the respective cohomology ring.
We call the class $\gamma_{2j}(\Delta)$ \emph{the horizontal part} of $\rho(\Delta)^{n+j}$. In Theorem~\ref{BKK1}, we compute the horizontal part of $\rho(\Delta)^{n+j}$ for any multi-polytope $\Delta$.

\begin{remark}
In this paper we deal with a topological version of BKK theorem. In particular, quasitoric bundles are locally trivial fiber bundles in smooth category over a smooth compact base. Thus, Theorems~\ref{BKK} and~\ref{BKK1} are natural extensions of the topological BKK theorems for toric bundles obtained in \cite{hof2020}. 

On the other hand, if one works only with algebraic toric bundles, one can use the theory of Newton-Okounkov bodies to relax assumptions on the base to be smooth or compact.  We will address this direction in the future work. 
\end{remark}

Our version of the \BKK theorem allows us to give a Pukhlikov-Khovanskii type description of the cohomology ring of quasitoric bundles. To do so we use the description of graded commutative algebras which satisfy Poincar\'e duality obtained in \cite{hof2020}. We then use this description to further prove a version of Stanley-Reisner and Brion descriptions of the cohomology ring of a quasitoric bundle in Theorems~\ref{thm:SR} and~\ref{thm:Br} respectively.

In the case when the base $B$ is an even-dimensional manifold (e.g. a quasitoric manifold), the subring 
\[
A = \bigoplus H^{2i}(E_{\Sigma,\Lambda},\R)
\]
of even-dimensional cohomology classes is a commutative Gorenstein ring. Using our version of the \BKK theorem stated in Section 3 and proved in Section 4 and the exact computation of a Macaulay's inverse system from~\cite{KhoM} and recalled in Section 5, we obtain an alternative description of the ring $A$ (see Theorem~\ref{subringevendim}) in Section 6.

\subsection*{Notations}
We will denote by $\cdot$ the cup product in cohomology. For a manifold $B$ and a cohomology class $\gamma \in H^*(B,\R)$, we will denote by $\langle\gamma,[B]\rangle$ the pairing of $\gamma$ with the fundamental class of $B$.

\subsection*{Acknowledgements}
The authors thank the Fields Institute for Research in Mathematical Sciences for the opportunity to work on this research project during the Thematic Program on Toric Topology and Polyhedral Products. We are grateful to Anton Ayzenberg, Victor Buchstaber, Johannes Hofscheier and Taras Panov for several helpful and inspiring conversations. 

The first author is partially supported by the Canadian Grant No.~156833-17. The research of the second author has been funded within the framework of the HSE University Basic Research Program and is also supported in part by Young Russian Mathematics award.


\section{Generalized quasitoric manifolds}
\label{sec:cohomology_toric}

In this section we introduce generalized quasitoric manifolds and discuss various approaches to describing their cohomology. 

\subsection{Definition}

We are going to define a notion, which is slightly more general than that of a quasitoric manifold; the methods we develop in this paper will work in this generality.

Let $K$ be an abstract simplicial complex of dimension $n-1$ on the vertex set $[s]=\{1,2,\ldots,s\}$. Recall that its moment-angle-complex $\mathcal Z_K$ is defined to be the $(s+n)$-dimensional cellular subspace in the unitary polydisc $(D^2)^s\subset\C^s$ given by the formula 
$\bigcup_{I\in K}\prod\limits_{i=1}^{s}{Y_i}$, where $Y_i=D^2$, if $i\in I$ and $Y_i=S^1$, otherwise. 

There is a natural (coordinatewise) action of the compact torus $T^s$ on $\mathcal Z_K$ and the orbit space $\mathcal Z_K/T^s$ is homeomorphic to the cone over the barycentric subdivision of $K$. When $K=\partial P^*$ is a polytopal sphere, the moment-angle-complex $\mathcal Z_K$ is homeomorphic to the moment-angle manifold $\mathcal Z_P$ and therefore acquires a smooth structure.

Let $\Sigma$ be a complete simplicial fan in $\R^n$ with the set of rays (1-dimensional generators) $\Sigma(1)=\{\rho_1,\ldots,\rho_s\}$. The intersection $K_\Sigma$ of the fan $\Sigma$ with the unit sphere $S^{n-1}\subset\R^n$ is, by definition, a starshaped triangulated sphere. Note that already in dimension 3, there exist non-polytopal starshaped spheres. Panov and Ustinovsky~\cite{PanUs} proved that $\mathcal Z_K$ has a smooth structure if and only if $K$ is a starshaped sphere.

Throughout the paper we denote by $M$ and $N$ the character and cocharacter lattices of $T^n$, respectively. 
Suppose $K=K_\Sigma$ is a starshaped sphere and $\Lambda: \Sigma(1) \to N$ is a \emph{characteristic map}, i.e. such a map that the collection of vectors
\[
\Lambda(\rho_1),\ldots, \Lambda(\rho_k) 
\] 
can be completed to a basis of the cocharacter lattice $N$, whenever $\rho_1,\ldots,\rho_k$ generate a cone in $\Sigma$.

Then the $(s-n)$-dimensional subtorus $H:=\ker\exp\Lambda\subset T^s$ acts freely on $\mathcal Z_K$ and the smooth manifold $X_{\Sigma,\Lambda}:=\mathcal Z_K/H$ will be called a \emph{generalized quasitoric manifold}. 

\begin{remark}
The maximal rank $s(K)$ of a freely acting subtorus in $T^s$ is called the \emph{Buchstaber number} of $K$; it satisfies the double inequality: $1\leq s(K)\leq s-n$. If $\Sigma$ is a normal fan of a convex simple $n$-dimensional polytope $P$ with $s$ facets and $s(\partial P^*)=s-n$, then $X_{\Sigma,\Lambda}$ is a \emph{quasitoric manifold} of $P$. This justifies the term 'generalized quasitoric manifold' we suggested above. Naturally, a quasitoric manifold serves as a key example and the most important particular case of a generalized quasitoric manifold.  
\end{remark}

A closed connected codimension-$k$ submanifold of $X_{\Sigma,\Lambda}$ is called \emph{characteristic} if it is fixed pointwise by a subtorus of dimension $k$ in $T^n$. Any such can be uniquely represented as a transverse intersection of $k$ characteristic submanifolds of codimension-2. It is easy to see that the face poset of characteristic submanifolds in $X_{\Sigma,\Lambda}$ is isomorphic to $K_\Sigma$.

In what follows we will always assume that our generalized quasitoric manifolds are \emph{omnioriented}; as in the case of a quasitoric manifold, we say that $X_{\Sigma,\Lambda}$ is omnioriented if an orientation is specified for $X_{\Sigma,\Lambda}$ and for each of the $s$ codimension-2 characteristic submanifolds $D_i$. The choice of this extra data is convenient for two reasons. First, it allows us to view the circle fixing $D_i$ as an element in the lattice $N=\Hom(S^1,T^n)\simeq\Z^n$. But even more importantly, the choice of omniorientation defines the fundamental class $[X_{\Sigma,\Lambda}]$ of $X_{\Sigma,\Lambda}$ as well as cohomology classes $[D_i]$ dual to the characteristic submanifolds. 

We further assume that $\Sigma \subset \R^n$ and $N_\R$ are endowed with orientation. This defines a sign for each collection of rays $\rho_{i_1},\ldots,\rho_{i_n}$ forming a maximal cone of $\Sigma$ in the following way. Let $I=\{i_1,\ldots,i_n\}$ be a set of indices ordered such that the collection of rays $\rho_{i_1},\ldots,\rho_{i_n}$ is positively oriented in $\R^n$, then
\[
\s(I)= \det(\Lambda(\rho_{i_1}),\ldots, \Lambda(\rho_{i_n})) = \pm 1.
\]
It is easy to see, that for codimension 2 characteristic submanifolds $D_{i_1},\ldots,D_{i_n}$ in a generalized quasitoric manifold one has
\[
[D_{i_1}]\cdots[D_{i_n}] = 
\begin{cases}
\s(I) \quad\text{ if } \rho_{i_1}\ldots,\rho_{i_n} \text{ form a cone in } \Sigma\\
0 \qquad \qquad\text{otherwise} 
\end{cases}
\]
\subsection{Cohomology ring: Stanley-Reisner-type description}

In this subsection we give a description of cohomology rings of generalized quasitoric manifolds based on the notion of a Stanley-Reisner ring of a simplicial complex.

Let $K$ be a simplicial complex on the vertex set $[s]$. The quotient algebra
$$
\Z[K]:=\Z[x_1,\ldots,x_s]/I,
$$
where $I$ is the monomial ideal generated by all square-free monomials $x_{i_1}\ldots x_{i_t}$ with $\{i_1,\ldots,i_t\}\notin K$, is called the \emph{Stanley-Reisner ring} (or, \emph{face ring}) of $K$ (with integer coefficients). In what follows we assume that $\Z[K]$ is a graded algebra with $\deg x_i=2, 1\leq i\leq s$. Denote by $\Z[\Sigma]:=\Z[K_\Sigma]$.

Recall that any generalized quasitoric manifold $X_{\Sigma,\Lambda}$ with $\dim\Sigma=n$ is a $2n$-dimensional \emph{locally standard torus manifold}; that is, a smooth connected closed orientable $2n$-dimensional manifold with an effective smooth locally standard action of the compact torus $T^n$ such that the fixed point set $M^T$ is non-empty and finite. 

Moreover, since the dual $Q^{\vee}$ to the manifold with corners $Q:=X_{\Sigma,\Lambda}/T^n$ is isomorphic to the starshaped sphere $K_\Sigma$, the orbit space $Q$ is a \emph{homology polytope}; that is, a manifold with corners in which all faces, including $Q$ itself are acyclic, and all nonempty intersections of faces are connected. 

The next result is a particular case of a theorem due to Masuda and Panov~\cite{panovmasuda}, who computed (equivariant) cohomology rings of torus manifolds. 
It describes the (equivariant) cohomology ring of $X_{\Sigma,\Lambda}$ in terms of the Stanley-Reisner ring $\Z[K_\Sigma]$ of $K_\Sigma$.

\begin{theorem}\label{cohgenqtrmfd}
\begin{enumerate}
\item Equivariant cohomology ring $H^*_{T^n}(X_{\Sigma,\Lambda})$ is isomorphic to $\Z[K_\Sigma]$;
\item Ordinary cohomology ring $H^*(X_{\Sigma,\Lambda})$ is isomorphic to $\Z[K_\Sigma]/J$, where 
$$
J = \rleft\langle \sum_{i=1}^s \chi(v_i)x_i \colon \chi \in M \rright\rangle\text{ and }v_i:=\Lambda(\rho_i), 1\leq i\leq s.
$$ 
\end{enumerate}
\end{theorem}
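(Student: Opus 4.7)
The plan is to prove both parts using the Borel construction together with the moment-angle complex model $X_{\Sigma,\Lambda} = \mathcal Z_{K_\Sigma}/H$, following the Davis--Januszkiewicz / Masuda--Panov strategy.

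For part (1), I would identify the Borel construction $ET^n \times_{T^n} X_{\Sigma,\Lambda}$ up to homotopy with the quotient $\mathcal Z_{K_\Sigma} \times_{T^s} ET^s$. This is legitimate because $H$ acts freely on $\mathcal Z_{K_\Sigma}$ and $T^n \cong T^s/H$. The latter space deformation retracts onto the Davis--Januszkiewicz space
\[
  DJ(K_\Sigma) = \bigcup_{I\in K_\Sigma} BT^I \subset (BT)^s,
\]
whose cohomology is computed by induction on the simplices of $K_\Sigma$ (a standard Mayer--Vietoris argument) to be precisely the Stanley--Reisner ring $\Z[K_\Sigma]$.

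For part (2), the key input is equivariant formality: the Serre spectral sequence of the Borel fibration $X_{\Sigma,\Lambda}\hookrightarrow (X_{\Sigma,\Lambda})_{hT^n}\to BT^n$ must collapse at $E_2$. I would establish this by showing $H^{\mathrm{odd}}(X_{\Sigma,\Lambda}) = 0$, using a Morse--Bott function built from a generic $\xi\in N_\R$ whose critical set is the finite $T^n$-fixed set $X_{\Sigma,\Lambda}^{T^n}$ (in bijection with the maximal cones of $\Sigma$) and whose indices at each fixed point are even. Combining equivariant formality with part (1) then gives
\[
  H^*(X_{\Sigma,\Lambda}) \;\cong\; H^*_{T^n}(X_{\Sigma,\Lambda})\otimes_{H^*(BT^n)} \Z \;\cong\; \Z[K_\Sigma]/J,
\]
once the restriction map $H^*(BT^n)\cong \Sym(M)\to \Z[K_\Sigma]$ is identified. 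Tracing through the $DJ$-model, the $i$-th coordinate projection $\mathcal Z_{K_\Sigma}\times_{T^s} ET^s \to BS^1$ is classified by $v_i=\Lambda(\rho_i)$, so a character $\chi\in M$ pulls back to $\sum_{i=1}^s \chi(v_i)\,x_i$, matching the generators of $J$.

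The main obstacle is the equivariant formality step. In the classical quasitoric case one uses the simple polytope structure on the orbit space to produce an explicit perfect Morse function via a generic linear functional. In the generalized setting $K_\Sigma$ is only required to be a starshaped sphere, possibly non-polytopal in dimension $\geq 3$, so the Morse-theoretic construction has to be adapted. I expect that a generic $\xi\in N_\R$ still yields a Morse--Bott function whose critical set is exactly $X_{\Sigma,\Lambda}^{T^n}$, with indices computed locally from $\Lambda$ at each maximal cone; verifying that all indices are even then reduces to a local combinatorial check that uses the characteristic condition on $\Lambda$ at each maximal cone.
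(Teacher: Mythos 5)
The paper does not actually prove this statement: the text immediately before the theorem states that it is a particular case of the Masuda--Panov theorem on torus manifolds (reference~\cite{panovmasuda}), and the preceding paragraph supplies the hypotheses that theorem needs, namely that $X_{\Sigma,\Lambda}$ is a locally standard torus manifold whose orbit space $Q = X_{\Sigma,\Lambda}/T^n$ is a \emph{homology polytope} (all faces acyclic, all nonempty face intersections connected) with $Q^\vee \cong K_\Sigma$. So you are attempting to supply a direct proof where the paper offers only a citation, and the comparison is really against the Masuda--Panov argument.

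Your Part~(1) is correct and follows the standard Davis--Januszkiewicz/Buchstaber--Panov line: since $H$ acts freely on $\mathcal Z_{K_\Sigma}$, one has $ET^n\times_{T^n} X_{\Sigma,\Lambda}\simeq ET^s\times_{T^s}\mathcal Z_{K_\Sigma}\simeq DJ(K_\Sigma)$, whose cohomology is $\Z[K_\Sigma]$ by the usual Mayer--Vietoris/colimit computation. None of this requires $K_\Sigma$ to be polytopal, and your identification of the characteristic classes of the coordinate circle bundles with $v_i=\Lambda(\rho_i)$ correctly produces the ideal $J$. This part matches the Masuda--Panov approach at the level of the equivariant computation.

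The genuine gap is exactly where you flag it: the equivariant formality step, i.e.\ $H^{\mathrm{odd}}(X_{\Sigma,\Lambda})=0$. The ``generic $\xi\in N_\R$ gives a perfect Morse--Bott function'' plan is what works for honest quasitoric manifolds over a simple polytope, because there one has the map $X\to P\subset M_\R$ and a generic linear functional on $M_\R$ becomes a height function whose flow produces a CW-decomposition with only even cells. In the generalized setting there is no convex body $P$; the moment image is a multi-polytope (a convex chain), so the pullback of a linear functional is not a priori a Morse--Bott function and the ``local combinatorial check'' you defer to is precisely the missing content. Non-polytopal starshaped spheres need not be shellable, and the polytopal shelling is what ordinarily guarantees the cell structure one is after; the Bialynicki--Birula argument that rescues the non-projective toric case is likewise unavailable because $X_{\Sigma,\Lambda}$ carries no holomorphic $\C^*$-action. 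Masuda and Panov prove $H^{\mathrm{odd}}(M)=0$ for torus manifolds over face-acyclic orbit spaces by an entirely different method --- an inductive cohomological argument on the face filtration of $Q$ (in the spirit of the Atiyah--Bredon sequence), not by Morse theory --- and this is the result the paper is appealing to. As written, your Part~(2) therefore rests on an unproved and nontrivial claim; either you would need to carry out the Masuda--Panov face-acyclicity argument, or actually construct the Morse--Bott function and verify the index computation, neither of which the sketch accomplishes.
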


When $\Sigma$ is a normal fan of a convex simple polytope $P$ with $s(\partial P^*)=s-n$, these results were obtained by Davis and Januszkiewicz~\cite{davis1991convex}. This description of the cohomology ring yields an algorithm to compute products of top-degree cohomology classes in $H^*(X_{\Sigma,\Lambda},\R)$. It can be done as follows.

As $R_{\Sigma,\Lambda}:=H^*(X_{\Sigma,\Lambda},\R)$ is generated in degree 1, it suffices to consider monomials $x_{i_1}^{k_1}\ldots x_{i_t}^{k_t}$.
Recall that the graded piece of top degree of $R_{\Sigma,\Lambda}$ is one-dimensional and is generated by $x_{i_1}\cdots x_{i_n}$, for any collection $i_1,\ldots, i_n$ of indices such that the rays $\rho_{i_1}, \ldots, \rho_{i_n}$ generate a full-dimensional cone in $\Sigma$.
Indeed, all such monomials $x_{i_1}\cdots x_{i_n}$ yield the same element in $R_{\Sigma,\Lambda}$.
Therefore, the evaluation of a monomial $x_{i_1}^{k_1}\ldots x_{i_t}^{k_t}$ in the top degree amounts to expressing it as a linear combination of square free monomials.

For a monomial $x_{i_1}^{k_1}\cdots x_{i_t}^{k_t}$, let $m=\sum_{i=1}^t(k_i-1)$ be its \emph{multiplicity}, so that being square free is equivalent to having multiplicity $0$.
To simplify notation, consider the monomial $x_1^{k_1}\cdots x_t^{k_t}$ (always possible by reordering the variables).
Suppose $x_1^{k_1}\cdots x_t^{k_t}$ is a monomial with multiplicity $m>0$ and $k_1>1$.
The goal is to express it as a linear combination of monomials of smaller multiplicity.
If $\rho_1,\ldots, \rho_t$ do not form a cone, then this monomial is equal to zero in $R_{\Sigma,\Lambda}$ and we are done.
Otherwise, the set $\{v_1,\ldots, v_t\}$ can be extended to a lattice basis of $N$, and therefore there is $\chi \in M$ such that $\chi(e_1)=1$ and  $\chi(e_j)=0$, for $j=2,\ldots,t$.
Note that $\chi$ induces a linear relation in $J$, so that we obtain
\[
  x_1^{k_1}\ldots x_t^{k_t} = x_1^{k_1-1} \ldots x_t^{k_t} \cdot \rleft(-\sum_{k=t+1}^s \chi(v_k)x_k\rright) \text{.}
\]
This is a linear combination of monomials of multiplicity $m-1$.
Applying this procedure inductively, we end up with a linear combination of square free monomials, and thus obtain an evaluation of $x_1^{k_1} \cdots x_t^{k_t}$.

\subsection{Cohomology ring: volume polynomial description}

Another description of the cohomology ring of $X_{\Sigma,\Lambda}$ comes from the volume polynomial on the space of analogous  multi-polytopes. To a simplicial fan $\Sigma$ with the characteristic map $\Lambda$ one can associate a vector space of analogous  multi-polytopes $\Pm_{\Sigma,\Lambda}$. Multi-fans and multi-polytopes were introduced in \cite{multifan}. 

Similar to a space of virtual polytopes, the space $\Pm_{\Sigma,\Lambda}$ can be identified with the space of $\Lambda$-piecewise linear functions on $\Sigma$. 

\begin{definition}
A function $h:\Sigma\to \R$ is called \emph{$\Lambda$-piecewise linear} if its restriction $f|_\sigma$ to any cone $\sigma$ of $\Sigma$ is a pullback of a linear function on $N$:
\[
f|_\sigma = (\Lambda^* \chi)|_\sigma, \text{ for some } \chi\in M_\R.
\]
A $\Lambda$-piecewise linear function is called \emph{integer}, if for any cone $\sigma$, the corresponding linear function $\chi_\sigma \in M$ is integer.
\end{definition}

We define the \emph{space of analogous  multi-polytopes} as the space of all $\Lambda$-piecewise linear functions on $\Sigma$:
\[
  \Pm_{\Sigma,\Lambda}= \rleft\{ h \colon \Sigma \to \R \colon \Lambda- \text{piecewise linear function on} \; \Sigma \rright\}. 
\]
Any element of $\Pm_{\Sigma,\Lambda}$ is determined by a choice of a point on each ray from $\Sigma(1)$. 

The above description generalizes the description of virtual polytopes via their support functions. In Section~4 we give a more detailed introduction to multi-polytopes. In particular, we discuss their geometric description as convex chains. By analogy with virtual polytopes, we will usually denote a multi-polytope by $\Delta$ and the corresponding $\Lambda$-piecewise linear function by $h_\Delta$.

Let $X_{\Sigma, \Lambda}$ be a generalized quasitoric manifold and let $D_1,\ldots, D_s$ be characteristic submanifolds of $X_{\Sigma, \Lambda}$ of codimension~$2$. Since $X_{\Sigma,\Lambda}$ is omnioriented, every multi-polytope $\Delta \in \Pm_{\Sigma,\Lambda}$ defines a degree 2 cohomology class of $X_{\Sigma, \Lambda}$ via
\[
\Delta \mapsto  \sum_{i=1}^s h_\Delta(e_i) [D_i],
\]
where $[D_i]\in H^2(X_{\Sigma, \Lambda},\R)$ is a class Poincar\'e dual to the characteristic submanifold $D_i$. For quasitoric manifolds, it was shown in \cite{davis1991convex} that the cohomology ring $H^*(X_{\Sigma, \Lambda},\R)$ is generated by the classes $[D_1],\ldots,[D_s]$ and, in particular, cohomology is generated by its degree-two part.

Commutative graded rings with Poincar\'e duality generated in degree 1 admit a convenient description via Macaulay's inverse systems. The following theorem is implicit in \cite{KP} and forms a key idea to get a description of the cohomology ring $H^*(X_\Sigma, \R)$.
The following version is taken from \cite[Theorem 1.1]{KavehVolume}.

\begin{theorem}
  \label{PKh}
  Suppose $A = \bigoplus_{i=0}^n A_i$ is a graded finite dimensional commutative algebra over a field $\K$ of characteristic $0$ such that $A$ is generated (as an algebra) by the elements $A_1$ of degree one, $A_0 \simeq A_n \simeq \K$, and the bilinear map $A_i \times A_{n-i} \to A_n$ is non-degenerate for any $i = 0, \ldots, n$ (Poincar\'{e} duality).
  Then 
  \[
    A\simeq \K[t_1, \ldots, t_r]/\{p(t_1, \ldots, t_r) \in \K[t_1, \ldots, t_r] \colon p(\tfrac{\partial}{\partial x_1}, \ldots, \tfrac{\partial}{\partial x_r}) f(x_1, \ldots, x_r) = 0 \}
  \]
  where we identify $A_1$ with $\K^r$ via a basis $v_1, \ldots, v_r$ and define $f \colon A_1 \simeq \K^r \to \K$ as the polynomial given by $f(x_1,\ldots, x_r) = (x_1v_1 + \ldots + x_rv_r)^n \in A_n \simeq k$.
\end{theorem}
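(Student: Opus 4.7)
The plan is to exhibit the natural graded surjection $\phi\colon \K[t_1,\ldots,t_r] \to A$ sending $t_i \mapsto v_i$ (surjective because $A$ is generated in degree one) and to identify its kernel with the ideal $I_f \coloneqq \rleft\{p \colon p(\tfrac{\partial}{\partial x_1},\ldots,\tfrac{\partial}{\partial x_r})\,f = 0\rright\}$; then $A \simeq \K[t_1,\ldots,t_r]/I_f$ follows immediately. Both rings are naturally graded (with $t_i$ and $v_i$ in degree one), so $\phi$ is a homomorphism of graded algebras and the identification of kernels can be made degree by degree.

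Let $\epsilon \colon A_n \xrightarrow{\sim} \K$ denote the chosen isomorphism. The heart of the argument is the apolarity identity
\[
  (pq)\rleft(\tfrac{\partial}{\partial x_1},\ldots,\tfrac{\partial}{\partial x_r}\rright)\,f \,\Big|_{x = 0} \;=\; n! \cdot \epsilon\rleft(\phi(p)\cdot\phi(q)\rright),
\]
valid for all homogeneous $p,q \in \K[t_1,\ldots,t_r]$ with $\deg p + \deg q = n$. To prove it, expand $f$ via the multinomial theorem as $f(x) = \sum_{|\alpha|=n} \binom{n}{\alpha}\,\epsilon(v^\alpha)\,x^\alpha$ with $v^\alpha = v_1^{\alpha_1}\cdots v_r^{\alpha_r}$, and apply $\partial^\beta x^\alpha|_{x=0} = \alpha!\,\delta_{\alpha,\beta}$; the factorials combine to yield the uniform constant $n!$, independent of $p$ and $q$.

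With this identity in hand, fix $p$ homogeneous of degree $d$. For $d > n$ both $\phi(p)$ and $p(\partial)\,f$ vanish trivially. For $d \le n$, Poincar\'e duality gives $\phi(p) = 0$ iff $\epsilon(\phi(p)\cdot\phi(q)) = 0$ for every homogeneous $q$ of degree $n-d$ (surjectivity of $\phi$ ensures that such $\phi(q)$ exhaust $A_{n-d}$). By the apolarity identity this is equivalent to $(pq)(\partial)\,f|_{x=0} = 0$ for all such $q$; and since $p(\partial)\,f$ is itself a homogeneous polynomial of degree $n-d$, which is determined by the values of all degree-$(n-d)$ monomial differential operators applied at the origin, this is in turn equivalent to $p(\partial)\,f \equiv 0$. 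Hence $\ker\phi = I_f$, proving the theorem. The main technical obstacle is the factorial bookkeeping in the apolarity identity: one must verify that the constant relating $(pq)(\partial)f|_0$ to $\epsilon(\phi(p)\phi(q))$ is exactly $n!$ uniformly in $p$ and $q$, so that a single ideal $I_f$ captures $\ker\phi$. Once this identity is in place, the Poincar\'e duality hypothesis translates the vanishing of $\phi(p)$ directly into the vanishing of $p(\partial)f$, and the result follows.
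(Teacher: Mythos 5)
The paper does not prove Theorem~\ref{PKh}; it cites it from \cite{KavehVolume} (and attributes the underlying idea to \cite{KP}), so there is no in-paper proof to compare against. That said, your proof is correct and complete, and it follows the standard apolarity/Macaulay-inverse-system route.

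A few remarks confirming the verification. The apolarity identity is the right pivot. Checking it on monomials $p = t^\beta$, $q = t^\gamma$ with $|\beta|+|\gamma|=n$: from $f(x)=\sum_{|\alpha|=n}\binom{n}{\alpha}\epsilon(v^\alpha)x^\alpha$ and $\partial^{\beta+\gamma}x^\alpha\big|_{x=0}=\alpha!\,\delta_{\alpha,\beta+\gamma}$ one gets
\[
(pq)(\partial)f\big|_{x=0}=\binom{n}{\beta+\gamma}(\beta+\gamma)!\,\epsilon(v^{\beta+\gamma})=n!\,\epsilon(v^{\beta+\gamma})=n!\,\epsilon\rleft(\phi(p)\phi(q)\rright),
\]
so the factorial bookkeeping works out uniformly, and bilinearity extends the identity to all homogeneous $p,q$ with $\deg p + \deg q = n$. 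Characteristic~$0$ is used exactly here (to divide by $n!$ and to have $f\ne 0$, which follows from $A_n\ne 0$ and generation in degree one via polarization). The remaining chain — that $\phi(p)=0$ is equivalent, via Poincar\'e duality and surjectivity of the graded map $\phi$, to $\epsilon(\phi(p)\phi(q))=0$ for all $q$ in degree $n-\deg p$, hence to vanishing of all top-order Taylor coefficients of the homogeneous polynomial $p(\partial)f$, hence to $p(\partial)f\equiv 0$ — is sound. You might add one sentence noting that $\ker\phi$ being an ideal automatically makes the right-hand set an ideal (or observe directly that $p(\partial)f=0$ implies $(qp)(\partial)f=q(\partial)\bigl(p(\partial)f\bigr)=0$), but this is cosmetic.
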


\begin{remark}
  \label{rem:diff-ops}
  Instead of identifying $A_1$ with $\K^r$, the previous theorem accepts a basis-free formulation in terms of $\Diff(A_1)$, the ring of differential operators with constant coefficients:
  $A \simeq \Diff(A_1) / \{ p \in \Diff(A_1) \colon p (f) = 0\}$.
  In particular, we formulate Theorems~\ref{thm-nsdquatient} and~\ref{potential} which generalize  Theorem~\ref{PKh}.
\end{remark}

Theorem~\ref{PKh} together with Theorem~\ref{cohgenqtrmfd} imply that to compute the cohomology ring of $X_{\Sigma,\Lambda}$ it is enough to know the self-intersection index of characteristic submanifolds $D_1,\ldots, D_s$. 
This was done by Ayzenberg and Masuda~\cite{ayzenberg2016volume}. They defined the notion of a volume polynomial of a multi-polytope and obtained the following generalization of the \BKK theorem to (generalized) quasitoric manifolds.

\begin{theorem}\label{thm:MABKK}
  Let $X_{\Sigma,\Lambda}$ be a (generalized) quasitoric manifold and let $\Delta \in \Pm_{\Sigma,\Lambda}$ be a multi-polytope. Then 
  \[
  \left(\sum_{i=1}^s h_\Delta(e_i) [D_i]\right)^n = n!\cdot \Vol(\Delta).
  \]
\end{theorem}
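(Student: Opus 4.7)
Both sides of the claimed identity are polynomials of degree $n$ in the variables $\xi_i := h_\Delta(e_i)$, $i = 1,\ldots,s$, on the space $\Pm_{\Sigma,\Lambda}$; hence it suffices to verify equality as polynomial functions in the $\xi_i$. The plan is to expand the left-hand side multinomially,
\[
\rleft(\sum_{i=1}^s \xi_i [D_i]\rright)^n \;=\; \sum_{k_1+\cdots+k_s=n} \binom{n}{k_1,\ldots,k_s}\, \xi_1^{k_1}\cdots \xi_s^{k_s}\, [D_1]^{k_1}\cdots [D_s]^{k_s},
\]
evaluate each top-degree monomial on the fundamental class $[X_{\Sigma,\Lambda}]$ using the Stanley-Reisner description of Theorem~\ref{cohgenqtrmfd}, and match the resulting polynomial in $(\xi_1,\ldots,\xi_s)$ with an explicit formula for $n!\Vol(\Delta)$.

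For the first step, a monomial $[D_{i_1}]^{k_1}\cdots [D_{i_r}]^{k_r}$ with $k_j \geq 1$ and $\sum k_j = n$ vanishes unless $\{\rho_{i_1},\ldots,\rho_{i_r}\}$ is contained in a cone of $\Sigma$, by the Stanley-Reisner ideal. For a non-vanishing term of positive multiplicity, I would apply the reduction algorithm described after Theorem~\ref{cohgenqtrmfd}: pick $\chi \in M$ with $\chi(\Lambda(\rho_{i_1})) = 1$ and $\chi(\Lambda(\rho_{i_j})) = 0$ for $j = 2,\ldots,r$, and use the corresponding linear relation in $J$ to rewrite one factor of $[D_{i_1}]$ and drop the multiplicity by one. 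Iterating this procedure, every top-degree monomial becomes a rational linear combination of square-free monomials $[D_{j_1}]\cdots [D_{j_n}]$, each of which evaluates to $\s(\sigma)\in\{\pm 1\}$ on $[X_{\Sigma,\Lambda}]$ whenever the rays span a maximal cone $\sigma$.

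The second step is to establish the parallel formula for $n!\Vol(\Delta)$ using the vertex description of a multi-polytope: for each maximal cone $\sigma = \langle \rho_{i_1},\ldots,\rho_{i_n}\rangle$, the function $h_\Delta$ determines a local ``vertex'' of $\Delta$, and $n!\Vol(\Delta)$ is the corresponding signed sum, with signs $\s(\sigma)$, of the Euclidean volumes of the resulting simplices. One then verifies that both polynomials in $(\xi_1,\ldots,\xi_s)$ satisfy the same defining relations: (i) vanishing on monomials whose support is not contained in a cone, and (ii) the linear relations coming from characters $\chi \in M$. Together these relations reduce a homogeneous polynomial of degree $n$ to its values on the square-free maximal-cone monomials, where both sides visibly give $\s(\sigma)$. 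The main obstacle is (ii) on the volume side: showing that $n!\Vol$, as a polynomial in $(\xi_i)$, is annihilated by the linear relations $\sum_i \chi(\Lambda(\rho_i))\,\partial/\partial \xi_i$ for $\chi \in M$. This is the translation-invariance property of the volume of a multi-polytope, and I would prove it either by a direct combinatorial computation using the vertex formula (cancelling contributions of adjacent maximal cones across a shared wall, where the characteristic vectors assembled from $\Lambda$ furnish the required dependencies), or more conceptually via equivariant localization of $\rleft(\sum \xi_i [D_i]\rright)^n$ on the torus manifold $X_{\Sigma,\Lambda}$, where the Atiyah-Bott-Berline-Vergne formula produces exactly the cone-by-cone vertex sum.
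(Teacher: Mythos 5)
The paper does not actually prove Theorem~\ref{thm:MABKK}: it attributes the result to Ayzenberg and Masuda~\cite{ayzenberg2016volume} and immediately reuses it as the base case ($i=0$) of the outer induction in the proof of the more general Theorem~\ref{BKK}. So there is no ``paper's own proof'' to match against; what there \emph{is} is the paper's proof of Theorem~\ref{BKK} in Section~\ref{BKKproof}, and your plan is essentially that argument specialized to $i=0$: expand multinomially, kill non-face monomials by Stanley--Reisner, reduce multiplicities via the linear relations $\chi\in M$ (the role of $L_v$ in the paper's ``inner induction''), and match square-free order-$n$ derivatives on both sides. Structurally you have rediscovered the right strategy.

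That said, you have misidentified where the real work is. The translation invariance of $\Vol$ --- the point you flag as ``the main obstacle'' --- is essentially free: adding a character $\chi$ to $\Delta$ is literally a translation of the multi-polytope, so $\Vol(\Delta+\chi)=\Vol(\Delta)$ and hence $L_\chi\Vol=0$ by definition, with no need for wall-crossing cancellations or ABBV localization. The genuinely non-trivial facts your argument quietly assumes are the ones the paper isolates as Theorem~\ref{ultimatepoly} and Lemma~\ref{Ider} (specialized to $f=1$): (a)~that $\Vol$ extends to a well-defined homogeneous degree-$n$ polynomial on $\Pm_{\Sigma,\Lambda}$ (recall multi-polytopes are convex chains, not ordinary polytopes, so this requires a construction), (b)~that $\partial_{i_1}^{k_1}\cdots\partial_{i_r}^{k_r}\Vol$ vanishes whenever $\rho_{i_1},\ldots,\rho_{i_r}$ do not span a cone of $\Sigma$ --- a locality statement with no quick one-line proof --- and (c)~that the square-free derivative at a maximal cone equals $\s(I)\,|\det(e_{i_1},\ldots,e_{i_n})|=\s(I)$. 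Your ``vertex description'' heuristic gestures at (a) and (c) but does not establish (b), and (b) is exactly the volume-side analogue of the Stanley--Reisner vanishing you carefully justified on the cohomology side. The paper sidesteps all of (a)--(c) by importing them from \cite{quasimulti}. If you take them as given, your reduction argument closes the proof correctly; if you intend to prove this theorem from first principles, (a)--(c) are the content, and they deserve the bulk of the argument you currently spend on the easy translation-invariance point.
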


\begin{corollary}
   Let $X_{\Sigma,\Lambda}$ be a generalized quasitoric manifold, then the cohomology ring $H^*(X_{\Sigma,\Lambda},\R)$ can be computed as 
   \[
   H^*(X_{\Sigma,\Lambda},\R) \simeq \Diff(\Pm_{\Sigma,\Lambda})/\Ann(\Vol),
   \]
   where $\Ann (\Vol)$ is the ideal of differential operators annihilating the volume polynomial on $\Pm_{\Sigma,\Lambda}$.
\end{corollary}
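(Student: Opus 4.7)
The plan is to apply Theorem~\ref{PKh} to the cohomology ring $A = H^*(X_{\Sigma,\Lambda},\R)$ with the standard doubled grading (so $H^{2i}$ has degree $i$), and then translate the resulting presentation from $\Diff(A_1)$ to $\Diff(\Pm_{\Sigma,\Lambda})$ via the natural surjection coming from multi-polytopes. First, I would verify that $A$ satisfies the hypotheses of Theorem~\ref{PKh}: it is a finite-dimensional graded commutative $\R$-algebra with $A_0\simeq A_n\simeq\R$; it satisfies Poincar\'e duality since $X_{\Sigma,\Lambda}$ is a closed orientable $2n$-manifold (orientation provided by the omniorientation); and it is generated in degree one by the classes $[D_1],\ldots,[D_s]$, as a consequence of the Stanley–Reisner description in Theorem~\ref{cohgenqtrmfd} (which also guarantees that the cohomology is concentrated in even degrees).

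Next, I would consider the natural linear map
\[
\varphi \colon \Pm_{\Sigma,\Lambda} \to A_1 = H^2(X_{\Sigma,\Lambda},\R), \qquad \Delta \mapsto \sum_{i=1}^s h_\Delta(e_i)\,[D_i].
\]
Surjectivity is immediate since each $[D_i]$ lies in the image (take the $\Lambda$-piecewise linear function with a single nonzero value on $e_i$), and by Theorem~\ref{cohgenqtrmfd} the $[D_i]$ span $A_1$. The kernel of $\varphi$ is exactly the subspace $M_\R \subset \Pm_{\Sigma,\Lambda}$ of globally linear functions on $N_\R$ restricted to $\Sigma$, matching the linear relations generating $J$. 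By Theorem~\ref{thm:MABKK}, the pullback along $\varphi$ of the top-power polynomial $f(v) = v^n$ on $A_1$ equals $n!\cdot \Vol$ on $\Pm_{\Sigma,\Lambda}$.

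Applying Theorem~\ref{PKh} in the basis-free form of Remark~\ref{rem:diff-ops} yields
\[
A \simeq \Diff(A_1)/\Ann(f).
\]
To rewrite this in terms of $\Pm_{\Sigma,\Lambda}$, I would invoke the following elementary linear-algebraic fact. Let $\pi\colon V \twoheadrightarrow V'$ be a surjection of finite-dimensional vector spaces with kernel $K$, let $g$ be a polynomial on $V'$, and write $\tilde g = g\circ \pi$. Choosing a splitting $V = K\oplus W$ with $W \xrightarrow{\sim} V'$ gives $\Sym(V) = \Sym(K)\otimes \Sym(W)$, and since $\tilde g$ depends only on the $W$-coordinates, one computes
\[
\Ann_V(\tilde g) = \bigl(\Sym_+(K)\cdot \Sym(V)\bigr) + \Ann_W(g),
\]
so that $\Diff(V)/\Ann(\tilde g) \simeq \Diff(V')/\Ann(g)$. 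Applying this to $V = \Pm_{\Sigma,\Lambda}$, $V' = A_1$, and $g = f/n!$ (using that $\Ann$ is invariant under nonzero scaling) gives $\Diff(\Pm_{\Sigma,\Lambda})/\Ann(\Vol) \simeq \Diff(A_1)/\Ann(f) \simeq A$, as desired.

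The argument is essentially a bookkeeping exercise once the three ingredients (Theorem~\ref{cohgenqtrmfd}, Theorem~\ref{thm:MABKK}, Theorem~\ref{PKh}) are combined. The only point requiring care is the translation between $\Diff(A_1)$ and $\Diff(\Pm_{\Sigma,\Lambda})$, namely checking that the kernel of $\varphi$ is absorbed into $\Ann(\Vol)$; this is the lemma above, whose verification is straightforward but is the step where one must be most careful.
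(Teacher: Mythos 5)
Your proposal is correct and follows essentially the same route the paper sketches: it combines Theorem~\ref{cohgenqtrmfd} (to get generation in degree two and Poincar\'{e} duality), Theorem~\ref{thm:MABKK} (to identify the pullback of the top-power polynomial with $n!\cdot\Vol$), and Theorem~\ref{PKh} in the basis-free form of Remark~\ref{rem:diff-ops}. The paper leaves the translation from $\Diff(A_1)$ to $\Diff(\Pm_{\Sigma,\Lambda})$ implicit; your linear-algebraic lemma (kernel of the surjection $\Pm_{\Sigma,\Lambda}\twoheadrightarrow H^2$ gets absorbed into the annihilator) is the correct and careful way to fill that gap, and its verification is sound.
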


\subsection{Cohomology ring: Brion-type description}
In this subsection we give the third description of the cohomology ring of a quasitoric manifold via the ring of piecewise polynomial functions on the multi-fan $(\Sigma, \Lambda)$.

For a multi-fan $(\Sigma,\Lambda)$, we call a continuous function $f:\Sigma \to \R$ ($\Lambda$-)\emph{piecewise polynomial} if its restriction $f|_\sigma$ to any cone $\sigma$ of $\Sigma$ is a pullback of a polynomial on $N_\R$ via $\Lambda$:
\[
f|_\sigma = \Lambda^* g_\sigma, \quad g_\sigma \in \R[M].
\]
The set of all piecewise polynomial functions on $(\Sigma,\Lambda)$ forms an $\R$-algebra with respect to pointwise addition and multiplication. Let us denote the algebra of piecewise polynomial functions on $(\Sigma,\Lambda)$ by $\PP_{\Sigma, \Lambda}$.

\begin{theorem}\label{thm:Brion}
  As before, let $X_{\Sigma,\Lambda}$ be a generalized quasitoric manifold given by the multi-fan $(\Sigma,\Lambda)$. 
  Then equivariant cohomology ring of $X_{\Sigma,\Lambda}$ is given by the ring of piecewise polynomial functions on $(\Sigma,\Lambda)$:
  \[
  H^*_T(X_{\Sigma,\Lambda},\R) \simeq \PP_{\Sigma,\Lambda}.
  \]
  Furthermore, the cohomology ring $H^*(X_{\Sigma,\Lambda},\R)$ is given by
  \[
  H^*(X_{\Sigma,\Lambda},\R) \simeq \PP_{\Sigma,\Lambda}/\langle M\rangle.
  \]
\end{theorem}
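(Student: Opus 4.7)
The strategy is to deduce both statements from the Stanley-Reisner description in Theorem~\ref{cohgenqtrmfd} by exhibiting an explicit graded $\R$-algebra isomorphism
\[
  \bar\Psi \colon \Z[K_\Sigma] \otimes_\Z \R \xrightarrow{\;\sim\;} \PP_{\Sigma,\Lambda}
\]
sending each generator $x_i$ to a Courant-type $\Lambda$-piecewise linear function $\varphi_i$, and identifying the ideal $J$ with $\langle M\rangle$. To construct $\varphi_i$, on each maximal cone $\sigma$ the characteristic condition guarantees that $\{\Lambda(\rho_j) \colon \rho_j \in \sigma(1)\}$ is an $\R$-basis of $N_\R$; letting $\{\chi_j^\sigma\}$ be the dual basis in $M_\R$, I set $\varphi_i|_\sigma = \Lambda^*\chi_i^\sigma$ if $\rho_i \in \sigma(1)$ and $\varphi_i|_\sigma = 0$ otherwise. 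Continuity across a common wall $\tau$ holds because the linear functions defined from the two sides agree at every ray of $\tau$ (both take value $1$ at $\Lambda(\rho_i)$ when $\rho_i \in \tau(1)$ and $0$ at the other $\Lambda(\rho_j)$ with $\rho_j \in \tau(1)$, and both vanish identically when $\rho_i \notin \tau(1)$). The assignment $x_i \mapsto \varphi_i$ extends to a graded homomorphism $\Psi \colon \R[x_1,\ldots,x_s] \to \PP_{\Sigma,\Lambda}$ which kills the Stanley-Reisner ideal: for any non-face $\{i_1,\ldots,i_k\} \notin K_\Sigma$, each maximal cone $\sigma$ must omit at least one $\rho_{i_\ell}$, forcing $\varphi_{i_1}\cdots\varphi_{i_k}|_\sigma = 0$. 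Hence $\bar\Psi$ is well-defined.

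To establish bijectivity of $\bar\Psi$, I compare both sides as graded modules over $\Sym(M_\R)$, where $\Sym(M_\R)$ acts on the left via $\chi \mapsto \sum_i \chi(v_i) x_i$ (the linear generators of $J$) and on the right as the global linear functions. Since $K_\Sigma$ is a starshaped simplicial sphere, Reisner's criterion ensures that $\Z[K_\Sigma] \otimes \R$ is Cohen-Macaulay, hence free over this linear system of parameters of rank equal to the number of maximal cones of $\Sigma$. On the other hand, restriction to maximal cones embeds $\PP_{\Sigma,\Lambda}$ as the subring of tuples in $\bigoplus_{\sigma \,\text{max}} \Sym(M_\R)$ compatible across walls; for a complete simplicial fan this subring is likewise a free $\Sym(M_\R)$-module of the same rank. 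A Hilbert-series comparison in each graded piece, combined with surjectivity of $\bar\Psi$ after inverting $\Sym(M_\R)$, forces $\bar\Psi$ to be an isomorphism.

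Finally, for any $\chi \in M$ the identity $\Lambda^*\chi = \sum_{i=1}^s \chi(v_i)\,\varphi_i$ holds because both sides are piecewise linear and coincide at every ray; this matches the generators of $J$ with those of $\langle M\rangle$, so $\bar\Psi(J) = \langle M\rangle$. Combining $\bar\Psi$ with Theorem~\ref{cohgenqtrmfd}(1) (tensored with $\R$) yields the equivariant statement, and quotienting both sides by the matched ideals together with Theorem~\ref{cohgenqtrmfd}(2) yields the ordinary statement. The principal difficulty is the bijectivity step, which must be argued without circular appeal to the very description being proved; the argument relies crucially on the Cohen-Macaulayness of starshaped spheres and on the freeness of $\PP_{\Sigma,\Lambda}$ as a $\Sym(M_\R)$-module for complete simplicial fans, after which the dimension count is routine.
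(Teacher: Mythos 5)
Your route is genuinely different from the paper's. The paper deduces both statements from GKM theory: it observes that $X_{\Sigma,\Lambda}$ has finitely many fixed points with pairwise independent isotropy weights and that $H^{\mathrm{odd}}(X_{\Sigma,\Lambda})=0$, so $X_{\Sigma,\Lambda}$ is an equivariantly-formal GKM manifold, and then reads off $H^*_T$ as the ring of compatible polynomial tuples on the GKM graph (vertices $=$ maximal cones, edges $=$ walls), which is exactly $\PP_{\Sigma,\Lambda}$. You instead build an explicit algebra map $\bar\Psi\colon\R[K_\Sigma]\to\PP_{\Sigma,\Lambda}$ out of Courant functions, kill the Stanley-Reisner ideal, match $J$ with $\langle M\rangle$, and try to force bijectivity by a Cohen-Macaulay dimension count.

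The gap is in the bijectivity step. Your argument needs two facts: (i) $\PP_{\Sigma,\Lambda}$ is a free $\Sym(M_\R)$-module, and (ii) its graded Hilbert series coincides with that of $\R[K_\Sigma]$. You assert (i) ``for a complete simplicial fan'' as if it were Billera's theorem, but Billera's freeness result concerns the \emph{untwisted} ring $\PP_\Sigma$; here the compatibility conditions across walls are governed by the $\Lambda$-images $\tilde\tau$, which in general do \emph{not} assemble into a fan (that is precisely the multi-fan phenomenon), so the result does not apply off the shelf. Even granting (i), ``same rank'' over $\Sym(M_\R)$ is not enough: one needs the free basis to sit in the same degrees, i.e.\ the full Hilbert series must agree with the $h$-vector of $K_\Sigma$, and that is a non-trivial claim in the twisted setting. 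Finally, ``surjectivity of $\bar\Psi$ after inverting $\Sym(M_\R)$'' is stated without proof; the recoverable part is \emph{injectivity} of $\bar\Psi$ (because $\ker\Psi=\bigcap_\sigma(x_i\colon \rho_i\notin\sigma(1))$ is exactly the Stanley-Reisner ideal), and injectivity plus equal Hilbert series would give surjectivity --- but that loops back to (ii). In short, the freeness-with-correct-Hilbert-series assertion is essentially as deep as the equivariant formality input used by the paper's GKM argument, and it is the very thing your proof is missing rather than a routine dimension count. A correct completion of your route would need either a direct combinatorial proof of the twisted Billera freeness with degree control, or an appeal to equivariant formality of $X_{\Sigma,\Lambda}$ --- at which point one is close to the paper's argument anyway.
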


\begin{proof}
  A generalized quasitoric manifold $X_{\Sigma,\Lambda}$ has only finitely many $T$-fixed points, and for each $p\in X_{\Sigma,\Lambda}^T$ the tangent space $T_pX_{\Sigma,\Lambda}$ is decomposed as a direct sum of irreducible representations of $T$ given by linearly independent characters. Moreover, $H^{odd}(X_{\Sigma,\Lambda})=0$ hence $X_{\Sigma,\Lambda}$ is an equivariantly-formal GKM manifold. The description of equivariant cohomology of $X_{\Sigma,\Lambda}$ follows from computation of equivariant cohomology of the corresponding GKM-graph (\cite{goresky1998equivariant, kuroki2009introduction}). 
  
  Indeed, the fixed points of $T$-action on $X_{\Sigma,\Lambda}$ are in bijection with maximal cones of $\Sigma$ and for the two maximal cones $\sigma_1, \sigma_2$ connected by a codimension 1 face, the label of the corresponding edge in the GKM-graph is given by a lattice generator of $(\sigma_1\cup \sigma_2)^\perp$. Hence we have:
  \[
  H^*_T(X_{\Sigma,\Lambda}^T,\R) \simeq  \left\{(f_p)_{p\in X_{\Sigma,\Lambda}}\in \Sym(M)\times\ldots\times\Sym(M) \,\Big|\, f_{p_{\sigma_1}}|_{\sigma_1\cap\sigma_2}=f_{p_{\sigma_2}}|_{\sigma_1\cap\sigma_2}\right\} \simeq \PP_{\Sigma,\Lambda}
  \]
  and
  \[
    H^*(X_{\Sigma,\Lambda},\R) \simeq \PP_{\Sigma,\Lambda}/\langle M\rangle.
  \]
\end{proof}


\section{BKK theorems for quasitoric bundles}
\label{sec:general}

In this section, we first collect several facts on quasitoric bundles which will be used below. Then we formulate the main results of this paper: Theorems~\ref{BKK} and~\ref{BKK1}.
Details and proofs will be given in Section~\ref{BKKproof}, and in Section~\ref{sec:applications} these theorems will be used to describe cohomology rings of toric bundles.


\subsection{Preliminaries on quasitoric bundles}
\label{sec:toric_bundles}

Let $G$ be a topological group and let $p \colon E \to B$ be a \emph{principal $G$-bundle} over a topological space $B$.
Recall that to any principal $G$-bundle $p \colon E \to B$ and any topological space $X$ equipped with a continuous action by $G$, one associates a fiber bundle by introducing a (right) action on the product $E \times X$:
\[
  (e, x) \cdot g \coloneqq (e\cdot g, g^{-1}\cdot x) \text{.}
\]
The \emph{associated fiber bundle} is given as the quotient $E \times_G X \coloneqq (E \times X) / G$.
It is a fiber bundle with fiber $X$.
If $G = T\simeq(S^1)^n$ is an $n$-dimensional torus, then a $T$--principal bundle is also called a \emph{torus bundle}.

Crucial to the understanding of the cohomology of fiber bundles is the following theorem (see, for instance,  \cite[Theorem 4D.1]{Hatcher}):
\begin{theorem}[Leray--Hirsch]
  \label{thm:Leray-Hirsch}
  Let $E$ be a fiber bundle with fiber $F$ over a compact manifold $B$.
  If there are global cohomology classes $u_1,\ldots , u_r$ on $E$ whose restrictions $i^*(u_i)$ form a basis for the cohomology of each fiber $F$ (where $i \colon F \to E$ is the inclusion), then we have an isomorphism of vector spaces:
  \[
    H^*(B, \R)\otimes H^*(F, \R) \to H^*(E, \R); \quad \sum_{i,j} b_i \otimes i^*(u_j) \mapsto \sum_{i,j} p^*(b_i) \cdot u_j\text{.}
  \]
\end{theorem}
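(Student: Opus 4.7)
The plan is to prove this by the standard Mayer--Vietoris/five-lemma argument, following Hatcher's approach (referenced in the excerpt). Define
\[
  \Phi_E \colon H^*(B,\R)\otimes H^*(F,\R)\to H^*(E,\R), \qquad \Phi_E\Bigl(\sum_{i,j} b_i\otimes i^*(u_j)\Bigr)=\sum_{i,j} p^*(b_i)\cdot u_j .
\]
The first observation is that $\Phi$ is natural with respect to open inclusions $U\hookrightarrow B$: restricting the $u_j$ to $E_U\coloneqq p^{-1}(U)$ still gives classes whose fiberwise restrictions form a basis of $H^*(F,\R)$, and the diagram relating $\Phi_{E_U}$ to $\Phi_E$ via the two restriction maps commutes, since $p^*$ and the cup product commute with pullback.

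Next, since $B$ is a compact manifold I would choose a finite open cover $U_1,\ldots,U_N$ such that $p$ trivializes over each $U_i$, and prove that $\Phi_{E|_W}$ is an isomorphism for $W$ any finite union of the $U_i$, by induction on the number of opens needed. The base case is a single trivializing open $W$: then $E|_W\cong W\times F$, and under the Künneth isomorphism $H^*(W\times F,\R)\cong H^*(W,\R)\otimes H^*(F,\R)$ the map $\Phi_{E|_W}$ becomes precisely the identity, using that the fiberwise restrictions $i^*(u_j)$ form a (finite) basis of $H^*(F,\R)$ and that $p^*$ corresponds to the first projection. (Finite-dimensionality of $H^*(F,\R)$ is crucial here.)

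For the inductive step, write $W=V_1\cup V_2$ where the theorem is already known for $V_1$, $V_2$, and $V_1\cap V_2$. Consider the Mayer--Vietoris sequences for the covers $W=V_1\cup V_2$ and $E|_W=E|_{V_1}\cup E|_{V_2}$. Tensoring the downstairs sequence with the finite-dimensional graded vector space $H^*(F,\R)$ preserves exactness (one is working over the field $\R$). Naturality of $\Phi$ with respect to restrictions and the connecting homomorphisms produces a commutative ladder between the two long exact sequences, and the five lemma gives the result for $W$. Iterating finitely many times — e.g.\ by grouping $V=U_1\cup\cdots\cup U_{N-1}$ against $U_N$ and noting that $V\cap U_N$ is itself covered by $N-1$ trivializing opens — yields the isomorphism on $B$.

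The main technical obstacle is verifying that $\Phi$ commutes with the Mayer--Vietoris connecting homomorphism; this is a direct cochain-level check, resting on the fact that $\Phi$ is given by cup product with globally defined classes $u_j$, so it commutes with restriction to opens and hence with the boundary maps built from such restrictions. As an alternative (and conceptually cleaner) route, one could run the Serre spectral sequence of $F\to E\to B$: the existence of global $u_j$ whose fiber restrictions span $H^*(F,\R)$ forces trivial monodromy, giving $E_2^{p,q}\cong H^p(B,\R)\otimes H^q(F,\R)$, and the same globality lets each basis class survive to $E_\infty$, forcing degeneration at $E_2$ and yielding $\Phi$ as the induced isomorphism on associated graded.
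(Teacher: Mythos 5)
The paper does not prove this statement at all: it cites it as Theorem~4D.1 of Hatcher and uses it as a black box, so there is no proof in the paper to compare against. Your reconstruction of the standard Mayer--Vietoris/five-lemma argument (which is indeed Hatcher's proof) is essentially correct. One small imprecision in the base case: under the K\"unneth isomorphism for $E|_W\cong W\times F$, the restricted class $u_j|_{W\times F}$ is not literally $1\otimes i^*(u_j)$ but rather $1\otimes i^*(u_j)$ plus terms living in $H^{>0}(W)\otimes H^{*}(F)$, so $\Phi_{E|_W}$ is not ``precisely the identity.'' What one actually gets is a map that is upper-triangular with respect to the filtration by $W$-degree, with identity on the associated graded, which is still an isomorphism---that is the correct way to close the base case. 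Your remark about needing to check that $\Phi$ commutes with the Mayer--Vietoris connecting map is the genuine technical point, and your justification (that $\Phi$ is built from pullback and cup product with globally defined classes, both of which are natural) is the right one. The alternative Serre spectral sequence route you sketch is also standard and valid; it is conceptually cleaner but requires more machinery to set up degeneration at $E_2$, whereas the Mayer--Vietoris approach is self-contained.
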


\begin{corollary}
  \label{cor:cohmodule}
  If $T$ is a torus, $p \colon E \to B$ a torus bundle as in Theorem~\ref{thm:Leray-Hirsch}, and $X$ a generalized $T$--quasitoric manifold, then as a group the cohomology of $E_X = E \times_T X$ is given by
  \[
    H^*(E_X, \R) \simeq H^*(B, \R) \otimes H^*(X, \R) \text{.}
  \]
\end{corollary}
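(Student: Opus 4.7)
The plan is a direct application of the Leray-Hirsch theorem (Theorem~\ref{thm:Leray-Hirsch}); the real content is constructing global cohomology classes on $E_X$ whose restrictions to a fiber yield a vector-space basis of $H^*(X,\R)$.

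First, I would invoke the Stanley-Reisner description of Theorem~\ref{cohgenqtrmfd}: $H^*(X,\R)$ is generated as an $\R$-algebra by the Poincar\'e duals $[D_1],\ldots,[D_s]$ of the codimension-$2$ characteristic submanifolds of $X = X_{\Sigma,\Lambda}$. Being a finite-dimensional graded algebra generated in degree $2$, it admits a vector-space basis consisting of monomials in the $[D_i]$; fix such a basis $\{[D_{i_1}]\cdots [D_{i_k}]\}_\alpha$ once and for all.

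Next I would lift each $[D_i]$ to a global class on $E_X$. Since $T$ is abelian, each characteristic submanifold $D_i \subset X$ is stable under the full $T$-action (it is the fixed locus of a circle $S^1_i \subset T$, hence $T$-invariant as a set). Therefore the associated bundle construction produces a codimension-$2$ submanifold
\[
\tilde D_i \;\coloneqq\; E\times_T D_i \;\subset\; E\times_T X = E_X,
\]
and we may let $\tilde u_i \in H^2(E_X,\R)$ denote its Poincar\'e dual. For a fiber inclusion $\iota\colon X \hookrightarrow E_X$ over a point $b\in B$, the intersection $\iota(X)\cap \tilde D_i$ is transverse and equals $D_i$, so $\iota^*(\tilde u_i) = [D_i]$. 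Consequently, the products $\tilde u_{i_1}\cdots \tilde u_{i_k}$ corresponding to the chosen monomial basis restrict under $\iota^*$ to the basis $[D_{i_1}]\cdots [D_{i_k}]$ of $H^*(X,\R)$.

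At this point the hypotheses of Leray-Hirsch are met, and applying Theorem~\ref{thm:Leray-Hirsch} yields the required isomorphism
\[
H^*(B,\R)\otimes H^*(X,\R) \xrightarrow{\sim} H^*(E_X,\R),\qquad \sum_{\alpha,\beta} b_\alpha \otimes [D_{i_1^\beta}]\cdots[D_{i_k^\beta}] \;\longmapsto\; \sum_{\alpha,\beta} p^*(b_\alpha)\cdot \tilde u_{i_1^\beta}\cdots \tilde u_{i_k^\beta}.
\]
The main obstacle I expect is the clean verification that $\iota^*(\tilde u_i) = [D_i]$; this ultimately rests on naturality of Poincar\'e duality together with the transversality of the fiber inclusion with $\tilde D_i$, both of which are standard but need to be pinned down carefully in the bundle setting. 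Everything else is a formal consequence of Stanley-Reisner generation in degree $2$ and Leray-Hirsch.
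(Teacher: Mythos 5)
Your proof is correct and takes essentially the same approach as the paper's: both lift Poincar\'e duals of characteristic submanifolds to classes on $E_X$ via the associated-bundle construction $E\times_T(\cdot)$ and then invoke Leray--Hirsch. The paper lifts a basis of Poincar\'e duals of characteristic submanifolds of all codimensions directly, while you lift only the codimension-$2$ ones and recover the basis from cup products; since higher-codimension characteristic submanifolds are transverse intersections of codimension-$2$ ones, this is the same construction said slightly differently.
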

\begin{proof}
  As a group, the cohomology of generalized quasitoric manifolds is generated by classes Poincar\'{e} dual to characteristic submanifolds.
  For any characteristic submanifold $X_\sigma$ of $X$, let $E_{\sigma} = E \times_T X_\sigma$ be the associated bundle which is a submanifold of $E_X$. Let us choose a linear basis $X_{\sigma_1}, \ldots, X_{\sigma_r}$ of $H^*(X,\R)$, then the cohomology classes $u_1, \ldots, u_r$ Poincar\'{e} dual to $E_{\sigma_1},\ldots, E_{\sigma_r}$ satisfy the condition of Theorem~\ref{thm:Leray-Hirsch}.
  The statement follows.
\end{proof}

Corollary~\ref{cor:cohmodule} yields a description of the cohomology \emph{group} of $E_X$.
Crucial for this description is the map which associates to any characteristic submanifold $X_\sigma$ of $X$ the Poincar\'{e} dual of the corresponding $T$-invariant submanifold of $E_X$.
By restricting this map to codimension $2$ characteristic manifolds and extending using linearity, we obtain
\[
  \rho\colon \Pm_{\Sigma,\Lambda} \to H^2(E_X, \R) \qquad \text{(where $(\Sigma,\Lambda)$ is the multi-fan of $X = X_{\Sigma,\Lambda}$)}
\]
which plays an important role in our description of the cohomology ring of $E_X$.
We provide more details for $\rho$.
Recall that $\rho_1,\ldots,\rho_r$ denote the rays of $\Sigma$ with  
\[
\Lambda(\rho_1)=e_1,\ldots, \Lambda(\rho_r)=e_r.
\]
Let $D_1,\ldots,D_r$ be the corresponding codimension 2 characteristic submanifolds in $X$.
We also write $D_i$ for the submanifold $E \times_T D_i$ of $E_X$, note that it comes with orientation induced by omniorientation of $X$.
Then for $\Delta \in \Pm_{\Sigma,\Lambda}$, we have:
\[
  \rho(\Delta) = \sum_{i=1}^r h_\Delta(e_i) [D_i] \in  H^2(E_X, \R),
\]
where $[D_i]$ is the class Poincar\'{e} dual to $D_i\subseteq E_X$ and $h_\Delta \colon N_\R \to \R$ is the support function of multi-polytope~$\Delta$.

As before, any character $\mu\in M$ of $T$ can be viewed as a multi-polytope, since $\Lambda^* \mu$ is a $\Lambda$-piecewise linear function on $\Sigma$. Therefore, the map $\rho$ defines a group homomorphism $\rho: M\to H^2(E_X,\Z)$, which we denote by the same symbol. We extend this map by linearity to a map of vector spaces $\rho: M_\R \to H^2(E_X,\R)$

On the other hand, any character $\lambda \in M$ defines a one--dimensional representation $\C_\lambda$ of $T$, namely $t \cdot z = \lambda(t) z$ for $t \in T$, and $z \in \C_\lambda$.
If $\cL_\lambda$ denotes the associated complex line bundle on $B$, i.e. $\cL_\lambda\simeq E\times_T\C_\lambda$, then $\cL_{\lambda+\mu}=\cL_\lambda\otimes\cL_\mu$, and thus we obtain a group homomorphism:
\[
  c\colon M\to H^2(B,\Z), \quad \lambda \mapsto c_1(\cL_\lambda),
\]
where $c_1(\cL_\lambda)$ is the first Chern class.
By linearity, we extend the homomorphism to a map of vector spaces:
\[
  c\colon  M_\R \to H^2(B,\R).
\]

The following observation about connection between $\rho(\cdot)$ and $c(\cdot)$ will be crucial for our approach.

\begin{proposition}\label{cherneq}
  Let $X$ be a generalized quasitoric bundle given by a characteristic pair $(\Sigma, \Lambda)$ and let $p\colon E_X \to B$ be as before.
  Then for any character $\lambda \in M$:
  \[
    p^* c(\lambda)= \rho(\lambda),
  \]
  where on the right hand side of the equality $\lambda$ is regarded as a virtual polytope.
\end{proposition}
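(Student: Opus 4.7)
The plan is to reduce the identity to an analogous one in the equivariant cohomology of the fiber $X = X_{\Sigma,\Lambda}$ and then transfer it to $E_X$. First, I would realize $p\colon E_X \to B$ as the pullback of the Borel fibration $\pi\colon X_T = ET \times_T X \to BT$ along a classifying map $B \to BT$ of the principal bundle $E$. This produces a natural map $q \colon E_X \to X_T$ with two key properties: \textbf{(a)} for each codimension-two characteristic submanifold $D_i^X \subset X$, transversality of the pullback square gives $q^*[D_i^X]_T = [D_i]$, where $[D_i^X]_T \in H^2_T(X,\R)$ is the equivariant Poincar\'e dual of $D_i^X$ and $[D_i] \in H^2(E_X,\R)$ is the Poincar\'e dual of $E\times_T D_i^X \subset E_X$; \textbf{(b)} for each $\lambda \in M$, the pullback through $\pi \circ q$ of the line bundle $ET \times_T \C_\lambda \to BT$ is $p^*\cL_\lambda$, so that $q^*(\lambda) = p^* c(\lambda)$, where on the left $\lambda$ is viewed inside $H^2(BT,\R)$ and pushed into $H^2_T(X,\R)$ via the structure map of the fibration $\pi$.

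Second, I would use Theorem~\ref{thm:Brion} to identify $H^*_T(X,\R) \simeq \PP_{\Sigma,\Lambda}$. Under this isomorphism, $\lambda$ corresponds to the global linear function $\Lambda^*\lambda$, while $[D_i^X]_T$ corresponds to the piecewise-linear function $\psi_i \in \PP_{\Sigma,\Lambda}$ determined on rays by $\psi_i(v_j) = \delta_{ij}$; this matching can be verified at $T$-fixed points via the GKM picture used in the proof of Theorem~\ref{thm:Brion} (at $p_\sigma$ the restriction of $[D_i^X]_T$ is $v_i^*$ if $\rho_i \in \sigma$ and $0$ otherwise, matching $\psi_i|_\sigma$). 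On any maximal cone $\sigma = \langle v_{i_1},\ldots,v_{i_n}\rangle$, the characteristic property of $\Lambda$ makes $\{v_{i_1},\ldots,v_{i_n}\}$ a basis of $N_\R$, and expanding $\Lambda^*\lambda|_\sigma$ in the dual basis $\{\psi_{i_k}|_\sigma\}$ yields $\Lambda^*\lambda|_\sigma = \sum_k \lambda(v_{i_k})\,\psi_{i_k}|_\sigma$. Since every ray lies in a maximal cone, this globalizes to the identity $\lambda = \sum_i \lambda(v_i)\,[D_i^X]_T$ in $H^2_T(X,\R)$.

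Applying $q^*$ to this identity then concludes the proof: by \textbf{(b)} the left-hand side becomes $p^*c(\lambda)$, and by \textbf{(a)} the right-hand side becomes $\sum_i \lambda(v_i)\,[D_i] = \rho(\lambda)$. The step I expect to require the most care is \textbf{(a)}, the transfer of Poincar\'e duals through the pullback square $E_X \to X_T$, which relies on transversality and the naturality of Poincar\'e duality; a secondary item is the explicit identification of $[D_i^X]_T$ with $\psi_i$ under Brion's isomorphism, which follows from the fixed-point computation sketched above. Both are standard but deserve to be spelled out cleanly in the write-up.
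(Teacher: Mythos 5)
Your proof is correct, but it takes a genuinely different (and longer) route than the paper's. The paper's argument stays at the level of line bundles over $E_X$: it introduces the $T$-equivariant line bundle $L_\lambda$ on the fiber $X$, observes that the associated bundle $E\times_T L_\lambda$ over $E_X$ simultaneously (i) has first Chern class $\rho(\lambda)$ (since $c_1(L_\lambda)$ on $X$ decomposes into characteristic divisors, and the associated-bundle construction carries this decomposition to the $[D_i]$ on $E_X$), and (ii) coincides with $p^*\cL_\lambda$ (this is a direct bundle isomorphism $E\times_T(X\times\C_\lambda)\simeq (E\times_T X)\times_B(E\times_T\C_\lambda)$), so the identity falls out of naturality of $c_1$. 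You instead pass through the Borel construction, realize $E_X$ as the pullback of $X_T\to BT$, and prove the decomposition $\lambda=\sum_i\lambda(v_i)[D_i^X]_T$ in $H^2_T(X,\R)$ using Brion's isomorphism and the GKM localization, then push it to $E_X$ by pulling back through $q$. Your route has the virtue of actually proving the Chern-class decomposition that the paper asserts as "given by," but it requires additional machinery: one must justify equivariant Poincar\'e duality and transversality in the (infinite-dimensional) Borel space $X_T$ (your step (a)), typically via finite-dimensional approximations of $ET$, and one must get the sign conventions right when matching $[D_i^X]_T$ with the basic piecewise-linear function $\psi_i$ at fixed points. Both approaches hinge on the same two facts --- naturality of Chern classes and the divisor decomposition of a character --- but yours makes the latter explicit at the cost of a detour through equivariant cohomology, while the paper keeps the argument short by citing that fact directly at the level of $E_X$.
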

\begin{proof}
  A character $\lambda$ of $T$ defines an equivarient line bundle $L_\lambda$ on the generalized quasitoric manifold $X$. The associated fiber bundle $E\times_TL_\lambda$ is a line bundle over $E_X$. Moreover, the Chern class of $E\times_TL_\lambda$ given by $c_1(E\times_TL_\lambda)=\rho(\lambda)$. It is easy to see that $E\times_TL_\lambda$ is a pullback of line bundle $\cL_\lambda$ on $B$
  \[
  E\times_TL_\lambda = p^*\cL_\lambda.
  \]
  Hence the proposition follows from naturality of Chern classes.
\end{proof}

We will finish this subsection with the following observation. The map of lattices $c\colon M\to H^2(B,\Z)$ is an invariant that uniquely describes torus bundles in the topological category.
Indeed, principle $G$-bundles over a smooth manifold $B$ are classified by the homotopy classes of maps $f:B\to BG$ to the classifying space. Since  the classifying space of a compact torus $T$ is homotopy equivalent to $\C\p^\infty\times \ldots \times \C\p^\infty$ and, in particular, is an Eilenberg–MacLane space $K(\Z^n,2)$,  homotopy classes of maps are uniquely determined by 
\[
c:=f^*: H^2(BT)\simeq M(T) \to H^2(B)
\]
We arrive at the following result.

\begin{proposition}
  \label{prop:toricbun}
  Let $B$ be a closed oriented manifold and $T$ be a compact torus with  lattice of algebraic characters $M(T)$.
  Then $T$-principal  bundles over $B$ are in bijection with homomorphisms $c\colon M(T)\to H^2(B,\Z)$.
\end{proposition}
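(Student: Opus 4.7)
The plan is to reduce everything to standard classifying space theory and then identify the target group $[B,BT]$ explicitly as $\Hom(M(T),H^2(B,\Z))$. The proposition is really a repackaging of the computation $H^*(BT,\Z)=\Sym^\bullet M(T)$, so the work lies in making the dictionary precise and checking that the bijection is induced by the map $c$ described in the statement.

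First, I would invoke the classical fact that for a paracompact space $B$ and a topological group $G$, isomorphism classes of principal $G$-bundles over $B$ are in natural bijection with $[B,BG]$, the set of homotopy classes of maps to the classifying space, with the bijection given by pullback of the universal bundle $EG\to BG$. Applying this to $G=T=(S^1)^n$ and using $BT\simeq (BS^1)^n\simeq (\C\p^\infty)^n$, together with the Eilenberg--MacLane identification $\C\p^\infty\simeq K(\Z,2)$, one has $BT\simeq K(N,2)$, where $N$ is the cocharacter lattice of $T$ (recall $N=\pi_1(T)=\pi_2(BT)$).

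Next, I would compute $[B,BT]=[B,K(N,2)]=H^2(B,N)$ by the representability of singular cohomology by Eilenberg--MacLane spaces. Since $B$ is a closed oriented manifold and $N$ is a finitely generated free abelian group, the universal coefficient theorem (or direct use of $N=\Hom(M,\Z)$) gives a natural isomorphism
\[
H^2(B,N)\;\cong\;H^2(B,\Z)\otimes_\Z N\;\cong\;\Hom\bigl(M,H^2(B,\Z)\bigr).
\]
Finally, I would check that this chain of bijections agrees with the assignment described before the proposition: a principal $T$-bundle $p\colon E\to B$ classified by $f\colon B\to BT$ corresponds to the homomorphism $c=f^*\colon H^2(BT,\Z)\to H^2(B,\Z)$ under the identification $H^2(BT,\Z)\cong M(T)$ sending a character $\lambda$ to $c_1(\cL_\lambda^{\text{univ}})$. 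Naturality of Chern classes, together with the fact that $\cL_\lambda\simeq E\times_T\C_\lambda=f^*\cL_\lambda^{\text{univ}}$, gives $c(\lambda)=c_1(\cL_\lambda)=f^*c_1(\cL_\lambda^{\text{univ}})$, so the two constructions agree.

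The only mild subtlety — which I expect to be the most technical step — is the identification $H^2(BT,\Z)\cong M(T)$ with the correct sign convention, i.e.\ checking that $c_1$ of the line bundle associated with the character $\lambda$ is exactly $\lambda$ under the isomorphism above rather than its dual. Once this is fixed on the universal example $T=S^1$, $BT=\C\p^\infty$, it propagates to general $T$ by naturality of $BT$ under products, and the bijection of the proposition follows.
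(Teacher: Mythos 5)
Your proposal is correct and follows essentially the same route as the paper: classify $T$-bundles by homotopy classes of maps to $BT\simeq(\C\p^\infty)^n\simeq K(\Z^n,2)$, identify $[B,K(\Z^n,2)]$ with $H^2(B,\Z^n)\cong\Hom(M(T),H^2(B,\Z))$, and match this with the first Chern class assignment $c$. Your additional attention to the sign convention on $H^2(BT,\Z)\cong M(T)$ and the explicit naturality check is a welcome refinement, but the underlying argument is the one given in the paper.
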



\subsection{\BKK theorems for quasitoric bundles}
\label{sec:bkk-tb}
As before, let $p\colon E\to B$ be a principal torus bundle with respect to a torus $T\simeq (S^1)^n$ over a compact smooth orientable manifold $B$ of real dimension $k$.
Let $M$ be the character lattice of $T$ and $(\Sigma, \Lambda)$ be a complete multi-fan which gives rise to a generalized quasitoric manifold $X = X_{\Sigma,\Lambda}$.
Let $E_X$ be the total space of the associated quasitoric bundle.
Note that $E_X$ is a compact smooth orientable manifold of real dimension $k+2n$.
To keep notation simple we denote the projection map of the toric bundle by $p\colon E_X \to B$ as well.

Our main theorems show that a choice of a natural number $i\leq\tfrac{k}{2}$ and $\gamma\in H^{k-2i}(B, \R)$ gives rise to a \mbox{\BKK-type} theorem.
First, we define two functions $I_\gamma$ and $F_\gamma$ on $\Pm_{\Sigma,\Lambda}$ as follows.

Let $f_\gamma\colon M_\R \to \R$ be given by 
\[
  f_\gamma(x)= \langle c(x)^{i} \cdot \gamma,[B]\rangle\text{,}
\]
where ``$\cdot$'' denotes the cup product of the cohomology ring $H^*(B,\R)$. Since $c\colon M_\R\to H^2(B,\R)$ is a linear map, $f_\gamma$ is a homogeneous polynomial of degree $i$ on $M_\R$.
This leads to the definition of $I_\gamma$:
\[
  I_\gamma\colon \Pm_{\Sigma,\Lambda}\to \R; \quad I_\gamma(\Delta) \coloneqq \int_\Delta f_\gamma(x) \diff\mu  \quad \text{for} \; \Delta\in \Pm_{\Sigma,\Lambda},
\]
where $\mu$ denotes the Lebesgue measure on $M_\R$ normalized with respect to the lattice $M$, i.e. a cube spanned by an affine lattice basis of $M$ has volume $1$. By Theorem~\ref{ultimatepoly}, $I_\gamma$ is well defined homogeneous polynomial of degree $n+i$ on $\Pm_{\Sigma,\Lambda}$.

Recall the definition of $\rho\colon \Pm_{\Sigma,\Lambda}\to H^{2}(E_X, \R)$ from Section~\ref{sec:toric_bundles}.
This leads to the definition of the function~$F_\gamma$:
\[
  F_\gamma\colon \Pm_{\Sigma,\Lambda} \to \R; \quad F_\gamma(\Delta) \coloneqq \langle \rho(\Delta)^{n+i}\cdot p^*(\gamma), [E_X]\rangle.
\]
Clearly, $F_\gamma$ is a homogeneous polynomial of degree $n+i$ on $\Pm_{\Sigma,\Lambda}$.

The main result of this section is the following analog of the \BKK theorem for quasitoric bundles.
Indeed, it expresses certain intersection numbers of cohomology classes as mixed integrals.

\begin{theorem}
  \label{BKK}
  The polynomials $I_\gamma$ and $F_\gamma$ are proportional with coefficient of proportionality given by:
  \[
    (n+i)!\cdot I_\gamma(\Delta)=i!\cdot F_\gamma(\Delta) \qquad \text{for any $\Delta\in\Pm_{\Sigma,\Lambda}$.}
  \]
  In particular, the polarizations of $I_\gamma$ and $F_\gamma$ are proportional multilinear forms, i.e. for any $\Delta_1,\ldots,\Delta_{n+i}\in \Pm_{\Sigma,\Lambda}$
  \[
    (n+i)!\cdot I_\gamma(\Delta_1,\ldots,\Delta_{n+i})=i!\cdot F_\gamma(\Delta_1,\ldots,\Delta_{n+i}).
  \]
\end{theorem}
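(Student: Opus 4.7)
The plan is to establish the equivalent class-level identity
\[
p_*(\rho(\Delta)^{n+i}) = \frac{(n+i)!}{i!}\int_\Delta c(x)^i\, d\mu \quad \text{in } H^{2i}(B,\R), \qquad(\ast)
\]
where $p_*\colon H^{2(n+i)}(E_X)\to H^{2i}(B)$ is integration along the fiber and the right-hand side is the $H^{2i}(B,\R)$-valued integral obtained by regarding $c(x)^i$ as a polynomial of degree $i$ in $x\in M_\R$ with coefficients in $H^{2i}(B,\R)$, well-defined on $\Pm_{\Sigma,\Lambda}$ by Theorem~\ref{ultimatepoly}. Pairing $(\ast)$ with $\gamma$ and $[B]$ and invoking the projection formula recovers $(n+i)!\,I_\gamma(\Delta)=i!\,F_\gamma(\Delta)$; polarizing yields the multilinear form version. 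Equivalently, summing over $i$ packages everything into the single exponential identity $p_*(e^{\rho(\Delta)})=\int_\Delta e^{c(x)}\,d\mu$, a Brion-type statement comparing fiber integration of the exponential first Chern class with the Laplace transform of $\Delta$.

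To prove $(\ast)$ I would polarize both sides into symmetric multilinear forms in $n+i$ variables on $\Pm_{\Sigma,\Lambda}$ and use the decomposition $\Pm_{\Sigma,\Lambda}=M\oplus V$ for any complement $V$ of the character subspace. By multilinearity it suffices to verify the polarized identity on pure type splits with $d$ character arguments $\chi_1,\dots,\chi_d\in M$ and $n+i-d$ non-character arguments $v_1,\dots,v_{n+i-d}\in V$. Proposition~\ref{cherneq} gives $\rho(\chi)=p^*c(\chi)$, so the projection formula rewrites the left-hand side as $p_*(\rho(v_1)\cdots\rho(v_{n+i-d}))\cdot c(\chi_1)\cdots c(\chi_d)$. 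On the right-hand side, translation invariance of the Lebesgue measure combined with $c(x+\chi)=c(x)+c(\chi)$ and binomial expansion, together with character-linear extraction in the polarization, produces $\frac{(n+i-d)!}{(i-d)!}$ times the mixed integral of $c(x)^{i-d}$ against $v_1,\dots,v_{n+i-d}$, multiplied by $c(\chi_1)\cdots c(\chi_d)$, when $d\le i$. For $d>i$ both sides vanish by a degree count ($H^\ell(B)=0$ for $\ell>k$ on the left; the polarization of a degree-$i$ polynomial has character-order at most $i$ on the right). Hence for $d>0$ each split reduces to a strictly lower-order instance of $(\ast)$ with all-non-character arguments.

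The remaining all-non-character core of $(\ast)$ is then handled by induction on $i$. The base case $i=0$ is exactly Theorem~\ref{thm:MABKK} applied fiberwise, since $p_*\colon H^{2n}(E_X)\to H^0(B)=\R$ records the fiber intersection number of the restrictions $\sum h_{v_l}(e_j)[D_j]$. For the inductive step the principal obstacle is that $\rho(v)^{n+i}$ carries $n+i>n$ factors, more than the fiber cohomology can absorb via Theorem~\ref{thm:MABKK}, so the surplus must be transferred to the base. I would handle this by systematically using the global relations $\sum_j\chi(e_j)[D_j]=p^*c(\chi)$ in $H^2(E_X)$ (Proposition~\ref{cherneq} rewritten) to trade surplus $\rho(v)$-factors for pullbacks $p^*c(\cdot)$; the projection formula then reduces fiber integration to the $i=0$ case, and a combinatorial bookkeeping against the polarization of $c(x)^i$ produces the correct coefficient $\frac{(n+i)!}{i!}$. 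A potentially cleaner alternative is to prove the exponential form $p_*(e^{\rho(\Delta)})=\int_\Delta e^{c(x)}\,d\mu$ in one stroke via an Atiyah-Bott/Brion-style fiberwise localization at the $T$-fixed points of each fiber, packaging all instances of $(\ast)$ simultaneously.
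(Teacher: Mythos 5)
Your high-level strategy matches the paper's: induct on $i$, use Theorem~\ref{thm:MABKK} (Ayzenberg--Masuda) for the base case $i=0$, and exploit Proposition~\ref{cherneq} ($\rho(\chi)=p^*c(\chi)$) together with the projection formula to ``peel off'' characters and drop $i$. Your polarization reduction to pure-type splits in $\Pm_{\Sigma,\Lambda}=M_\R\oplus V$ is a legitimate first move, and the degree count for $d>i$ is fine. However, what remains after that reduction---the ``all-non-character core'' with $i'>0$---is exactly where the real work lies, and there you only write ``trade surplus $\rho(v)$-factors for pullbacks \dots\ combinatorial bookkeeping produces the correct coefficient.'' This is a genuine gap, not an omitted routine calculation.

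Concretely, the difficulty is this: for $v\in V$ non-character, $\rho(v)=\sum_k h_v(e_k)[D_k]$ has no pullback structure, so ``trading for pullbacks'' requires (a) expanding each $v_j$ in the coordinate basis $h_1,\dots,h_s$, (b) deciding whether a given monomial $[D_{k_1}]\cdots[D_{k_{n+i'}}]$ vanishes because the corresponding rays do not span a cone, and if it does not vanish, (c) choosing a character $\chi$ adapted to that cone (with $\chi(e_{k_1})=1$, $\chi(e_{k_j})=0$ for $j\ge2$) to rewrite a repeated $[D_{k_1}]$ as $p^*c(\chi)-\sum_{k\notin\{k_1,\dots,k_r\}}\chi(e_k)[D_k]$. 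Notice this \emph{reintroduces} characters, so your ``all-non-character'' split does not stay closed under the reduction. What actually controls termination is not the character count $d$ but the multiplicity of the coordinate differential monomial: the paper sets this up precisely via a double induction---outer on $i$, inner on multiplicity of $\partial_{i_1}^{k_1}\cdots\partial_{i_r}^{k_r}$---with the inner base case (square-free monomials) given a clean geometric evaluation via vertices of the multi-polytope (Lemmas~\ref{Ider} and~\ref{Fder}), and the inner step (Lemma~\ref{Lvind}) using $\partial_1=L_v-\sum_{j>r}\langle v,e_j\rangle\partial_j$ for the adapted character $v$. Your sketch is aware of the correct tools but has not assembled them into an argument that terminates; the decomposition $M_\R\oplus V$ as stated is not the right bookkeeping device. (Your alternative suggestion of a one-stroke fiberwise GKM/Atiyah--Bott localization proof of the exponential identity is plausible and genuinely different from the paper's route, but it is mentioned only as an aside and not developed.)
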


For the reader's convenience, we recall the concept of polarization (or equivalently mixed integrals). Let $V$ be a  vector space. Recall that for a homogeneous polynomial $f\colon V \to \R$ of degree $m$, the \emph{polarization of $f$} is the unique symmetric multilinear form $g\colon V^m \to \R$ such that $g(v,\ldots,v)=f(v)$. 
It is a well-known fact that for any vector space $V$ and any homogeneous polynomial $f$ of degree $m$, the polarization exists and can be defined as follows:
\begin{equation}\label{eq:polarisation_formula}
  g(v_1,\ldots,v_m) = \frac{1}{m!}L_{v_1}\ldots L_{v_m}f,
\end{equation}
where by $L_v$ we denote the Lie derivative in direction of $v$. For a homogeneous polynomial $f\colon M_\R\to \R$, we will call the polarization of $I_f$ on the space of analogous  multi-polytopes $\Pm_{\Sigma,\Lambda}$  the \emph{mixed integral of $f$}.

We conclude this section with an alternative interpretation of Theorem~\ref{BKK} which can be favourable for certain applications.
By the Leray-Hirsch Theorem (see Corollary~\ref{cor:cohmodule}), $H^*(E_X, \R) \simeq H^*(B, \R)\otimes H^*(X, \R)$, and so for $\Delta \in \Pm_{\Sigma,\Lambda}$, the cycle $\rho(\Delta)^{n+i}\in H^{2n+2i}(E_X, \R)$ can be written as
\[
  \rho(\Delta)^{n+i} =  b_{2n+2i}\otimes x_{0} +  b_{2n+2i-2}\otimes x_{2} +\ldots + b_{2i+2}\otimes x_{2n-2} +  b_{2i}\otimes x_{2n},
\]
with $b_{s}\in H^s(B, \R)$ and $x_{r}\in H^{r}(X, \R)$.
As $X$ is a quasitoric manifold, its cohomology groups in odd degrees vanish, and therefore $x_{2k+1}=0$ for any $k$.
If $x_{2n}$ is normalized such that it is dual to a point, we call $b_{2i}$ the \emph{horizontal part} of $\rho(\Delta)^{n+i}$.
Equivalently, the horizontal part $b_{2i}$ of $\rho(\Delta)^{n+i}$ is the unique class in $H^{2i}(B,\R)$ such that
\[
  \langle\rho(\Delta)^{n+i}\cdot p^*(\eta),[E_X]\rangle = \langle b_{2i} \cdot \eta, [B]\rangle  \text{,}
\]
for any $\eta\in H^{k-2i}(B,\R)$.
Then Theorem~\ref{BKK} accepts the following reformulation.

\begin{theorem}
  \label{BKK1}
  For any $\Delta \in \Pm_{\Sigma,\Lambda}$, the horizontal part of $\rho(\Delta)^{n+i}$ can be computed as
  \[
    b_{2i} = \frac{(n+i)!}{i!} \int_\Delta c(x)^i \diff x \text{.}
  \]
\end{theorem}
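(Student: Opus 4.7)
The plan is to derive Theorem~\ref{BKK1} directly from Theorem~\ref{BKK} by testing against an arbitrary cohomology class on the base and then invoking Poincar\'e duality. By definition, the horizontal part $b_{2i} \in H^{2i}(B, \R)$ is the unique class satisfying
\[
  \langle \rho(\Delta)^{n+i} \cdot p^*(\eta),[E_X]\rangle = \langle b_{2i} \cdot \eta,[B]\rangle
\]
for every $\eta \in H^{k-2i}(B,\R)$; uniqueness is a consequence of Poincar\'e duality on the closed oriented manifold $B$. The strategy is to rewrite both sides using Theorem~\ref{BKK} and then peel off $\eta$.

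First I would apply Theorem~\ref{BKK} with $\gamma = \eta$ to obtain $(n+i)!\cdot I_\eta(\Delta) = i!\cdot F_\eta(\Delta) = i!\cdot \langle b_{2i} \cdot \eta,[B]\rangle$. By the definition of $I_\eta$, the left-hand side equals $(n+i)!\int_\Delta \langle c(x)^i \cdot \eta,[B]\rangle \diff\mu$. The key observation is that $x\mapsto c(x)^i$ is a polynomial map of degree $i$ from $M_\R$ to the finite-dimensional vector space $H^{2i}(B,\R)$, so it is integrable componentwise over $\Delta$; the resulting vector-valued integral $\int_\Delta c(x)^i \diff\mu \in H^{2i}(B,\R)$ is well defined by Theorem~\ref{ultimatepoly}, and by $\R$-linearity of the pairing with $[B]$ one may exchange integration and pairing:
\[
  I_\eta(\Delta) = \left\langle \left(\int_\Delta c(x)^i \diff\mu\right)\cdot \eta,[B]\right\rangle.
\]
Combining these identities yields $\tfrac{(n+i)!}{i!}\left\langle \left(\int_\Delta c(x)^i\diff\mu\right)\cdot \eta,[B]\right\rangle = \langle b_{2i}\cdot\eta,[B]\rangle$ for every $\eta \in H^{k-2i}(B,\R)$, and Poincar\'e duality on $B$ then forces $b_{2i} = \tfrac{(n+i)!}{i!}\int_\Delta c(x)^i\diff\mu$, as desired.

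Thus the entire substance of Theorem~\ref{BKK1} is transferred to Theorem~\ref{BKK}, which is where the real obstacle lies. The natural strategy for Theorem~\ref{BKK} is to establish the polynomial identity $(n+i)!\,I_\gamma = i!\,F_\gamma$ on $\Pm_{\Sigma,\Lambda}$ by (a) verifying that both sides are homogeneous polynomials of degree $n+i$ in $\Delta$ (Theorem~\ref{ultimatepoly} for $I_\gamma$, and a direct cup-product computation for $F_\gamma$), and (b) comparing their polarizations on a generating family of multi-polytopes. The linchpin of (b) is Proposition~\ref{cherneq}, which identifies $\rho(\lambda) = p^*c(\lambda)$ for characters $\lambda\in M$: it is the bridge that transports the polynomial $f_\gamma$ on $M_\R$ into intersection numbers on $E_X$. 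Combined with Leray--Hirsch (Corollary~\ref{cor:cohmodule}), which separates the base and fiber contributions, and with the fiberwise statement Theorem~\ref{thm:MABKK}, the combinatorial factor $(n+i)!/i!$ should fall out of the multinomial expansion of $\rho(\Delta)^{n+i}$ once the $n$ factors intersecting the top class of the fiber are matched against the $i$ factors intersecting $c(x)^i \cdot \gamma$ on the base.
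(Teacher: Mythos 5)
Your proof is correct and follows essentially the same route as the paper's: test against an arbitrary $\eta\in H^{k-2i}(B,\R)$, invoke Theorem~\ref{BKK}, exchange integration with the pairing against $[B]$, and conclude by Poincar\'e duality. The one wrinkle is that you take the pairing characterization of the horizontal part $b_{2i}$ as the definition; the paper takes the Leray--Hirsch decomposition (with $x_{2n}$ dual to a point of the fiber) as primary and spends most of its proof verifying that the two agree, by identifying the lift $y_{2n}$ of $x_{2n}$ with the class Poincar\'e dual to a section of $p$ --- a step you implicitly rely on when writing $F_\eta(\Delta)=\langle b_{2i}\cdot\eta,[B]\rangle$.
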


Note that $c(\cdot)^i \colon M_\R  \to H^{2i}(B,\R)$ is a vector valued map whose components (after choosing suitable coordinates) are given by homogeneous polynomials of degree $i$.
Thus the integral in Theorem~\ref{BKK1} exists.
Furthermore, although we show that Theorem~\ref{BKK} implies Theorem~\ref{BKK1}, in fact they are equivalent.

\begin{proof}
  Since $H^*(B, \R)$ satisfies Poincar\'{e} duality, it suffices to check that for any $\gamma \in H^{k-2i}(B, \R)$, we have
  \[
   \left\langle i! \cdot \gamma \cdot b_{2i},[B]\right \rangle= (n+i)! \left\langle\gamma \cdot \int_\Delta c(x)^i \diff x, [B]\right \rangle,
  \]
  Recall from Theorem~\ref{thm:Leray-Hirsch}, that there are $u_1, \ldots, u_r \in H^*(E_X, \R)$ such that the restrictions $i^*(u_i)$ form a basis for the cohomology of each fiber $X$ where $i \colon X \to E_X$ is the inclusion.
  In particular, there are $y_i \in \R u_1 \oplus \ldots \oplus \R u_r$ such that $i^*(y_i) = x_i$.
  Since $x_{2n}$ is Poincar\'{e} dual to a point (say to a torus fixed point $x \in X$), it follows that $y_{2n} = E \times_T \{x\}$, i.e. $y_{2n}=[S]^*$ is the class Poincar\'e dual to a section of $p$.
  
  Let $[\pt]^* \in H^k(B, \R)$ be the class dual to a point in $B$.
  For $\gamma \in H^{k-2i}(B, \R)$, let $\gamma \cdot b_{2i} = a \cdot [\pt]^*$ for some real number $a \in \R$.
  Then,
  \begin{align*}
    p^*(\gamma) \cdot \rho(\Delta)^{n+i} &= p^*(\gamma) \cdot (p^*(b_{2n+2i}) \cdot y_0 + p^*(b_{2n+2i-2}) \cdot y_2+ \ldots + p^*(b_{2i+2}) \cdot y_{2n-2}+p^*(b_{2i}) \cdot y_{2n})\\
    &= p^*(\gamma \cdot b_{2i}) \cdot y_{2n} = a \cdot p^*([\pt]^*) \cdot E\times_T \{x\} =a \cdot [X]^* \cdot [S]^*= a \in H^{2n+k}(E_X, \R) \simeq \R \text{,}
  \end{align*}
  where $[X]^*$ is the class dual to a fiber of $p$.
  Hence, we get
  \[
    \langle\gamma \cdot b_{2i},[B]\rangle = \langle p^*(\gamma) \cdot \rho(\Delta)^{n+i},[E_X]\rangle  = F_\gamma(\Delta) \text{.}
  \]
  On the other hand,
  \[
  \left \langle \gamma \cdot \int_\Delta c(x)^i \diff x,[B]\right\rangle = \int_\Delta \langle\gamma \cdot c(x)^i ,[B]\rangle\diff x = I_\gamma(\Delta).
  \]
  The statement follows by Theorem~\ref{BKK}.
\end{proof}

Like Theorem~\ref{BKK}, Theorem~\ref{BKK1} admits a polarized version.


\section{Proof of the \BKK theorems}
\label{BKKproof}

This section is devoted to the proof of Theorem~\ref{BKK}. Before we begin with the proof, let us summarize needed results on the polynomial measures on the space of analogous  multi-polytopes. For the details and proofs please see \cite{quasimulti,key}.

As before, let $(\Sigma, \Lambda)$ be a characteristic pair and let $\Sigma(1)=\{\rho_1,\ldots,\rho_s\}$ be the set of rays of $\Sigma$ as before. To a pair $(\Sigma, \Lambda)$ we associate the space of \emph{multi-polytopes} $\Pm_{\Sigma,\Lambda}$ which is identified with the space of $\Lambda$-piecewise linear functions on $\Sigma$.

Note that since the fan $\Sigma$ is simplicial, a $\Lambda$-piecewise linear function on $\Sigma$ is uniquely determined by its values on the unit vectors in the directions of rays of $\Sigma$. Therefore the space  $\Pm_{\Sigma,\Lambda}$ is naturally isomorphic to $\R^s$ with the natural coordinates $h_1,\ldots,h_s$ given by evaluation of  $\Lambda$-piecewise linear functions on the unit vectors in the directions of rays $\rho_1,\ldots,\rho_s$.

There is a correspondence between the cones of $\Sigma$ and faces of the multi-polytope $\Delta \in \Pm_{\Sigma,\Lambda}$. Indeed, for each cone $\sigma\in \Sigma$ of dimension $i$ there exists an affine subspace $M_\sigma\subset M_\R$ of codimension $i$ with a multi-polytope $\Delta_\sigma$ in it. We will only use this correspondence in the case of maximal dimensional cones. Let $\sigma$ be a maximal cone of $\Sigma$. Without loss of generality we may assume that $\sigma$ is generated by the rays $\rho_1,\ldots,\rho_n$. Then the corresponding affine subspace $M_\sigma$ is zero-dimensional given by the system of linear equations
\[
\langle \Lambda(\rho_i), x \rangle = h_{\Delta}(e_i), \quad i = 1,\ldots, n,
\]
and we identify the corresponding vertex of $\Delta$ with the unique solution of the system above.

The main result we will need is the following construction. Let $f:M_\R \to \R$ be a homogeneous polynomial of degree $d$, then there is a well-defined integration functional on the space of virtual polytopes:
\[
I_f(\Delta) = \int_{\Delta} f \diff \mu, \quad \text{for } \Delta\in \Pm_{\Sigma,\Lambda}.\footnote{To give a proper construction one has to use a more geometric realization of multi-polytopes as convex chains. This is the approach we use in \cite{quasimulti}. However, here we will not give the actual definition of $I_f$: we will only formulate some of its properties as this is enough for the purposes of this paper.}
\]

The main result we will need is the following theorem.

\begin{theorem}
  \label{ultimatepoly}
  Let $f\colon M_\R \to\R$ be a homogeneous polynomial of degree $d$, then the function $I_f\colon \Pm_{\Sigma,\Lambda}\to \R$ is a homogeneous polynomial of degree $n+d$ on $\Pm_{\Sigma,\Lambda}$.
\end{theorem}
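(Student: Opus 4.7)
The plan is to separate the two assertions---that $I_f$ is a polynomial, and that it is homogeneous of degree $n+d$. The homogeneity is the easier of the two and follows from a change-of-variables argument, so I would begin there. Under the dilation $\Delta \mapsto t\Delta$ (i.e.\ $h_i \mapsto t h_i$ for every ray $\rho_i$), the convex-chain realization of $\Delta$ scales homogeneously, and substituting $y=tx$ in the integral yields a factor $t^n$ from the Jacobian together with a factor $t^d$ because $f$ is homogeneous of degree $d$. Hence $I_f(t\Delta)=t^{n+d}I_f(\Delta)$, so once polynomiality is established, the degree must automatically be $n+d$.

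For polynomiality, the key structural observation is that the vertices of $\Delta$ depend linearly on the coordinates $h_1,\dots,h_s$ of $\Pm_{\Sigma,\Lambda}$. For each maximal cone $\sigma \in \Sigma(n)$ spanned by rays $\rho_{i_1},\dots,\rho_{i_n}$, the characteristic condition forces $\Lambda(\rho_{i_1}),\dots,\Lambda(\rho_{i_n})$ to be a lattice basis, so the linear system $\langle \Lambda(\rho_{i_j}), v_\sigma\rangle = h_{i_j}$ has a unique solution $v_\sigma(\Delta)\in M_\R$, and Cramer's rule makes the linear dependence on the $h_{i_j}$ explicit.

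Given that, I would cut $[\Delta]$ into pieces on which $f$ can be integrated by elementary means. Using the convex-chain decomposition of multi-polytopes developed in \cite{quasimulti}, one obtains a signed decomposition
\[
[\Delta] = \sum_{\sigma \in \Sigma(n)} \s(\sigma)\,[S_\sigma(\Delta)],
\]
in which each $S_\sigma(\Delta)$ is an $n$-simplex whose vertices are affine-linear functions of $h_1,\dots,h_s$ (concretely, each vertex is either the origin or some $v_\tau(\Delta)$). Integrating over $[\Delta]$ term by term and applying the classical formula for $\int_S f\,d\mu$ over an $n$-simplex $S$---obtained by affine reparameterization from the standard simplex---produces in each summand a polynomial in the vertex coordinates of $S$ of total degree $n+d$. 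Precomposing with the linear maps $h\mapsto v_\tau$ yields a polynomial in $h_1,\dots,h_s$ of degree $n+d$, and summing over $\sigma$ gives $I_f(\Delta)$ as required.

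The main obstacle is not the integration step but the rigorous justification of the signed convex-chain decomposition above for arbitrary---in particular non-convex---multi-polytopes, together with the fact that term-by-term integration is legitimate and independent of auxiliary choices (orientation conventions, choice of triangulation of each cone piece). This is precisely the content of the convex-chain formalism of \cite{quasimulti}; granted that, the remaining argument reduces to elementary polynomial manipulations and a dimension count.
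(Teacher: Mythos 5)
The paper does not actually prove Theorem~\ref{ultimatepoly}: it is stated as a result recalled from the companion references \cite{quasimulti,key}, and the authors explicitly say they will only quote its properties. So there is no in-paper argument against which to compare your sketch; I can only assess the proposal on its own terms.

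Your homogeneity argument is fine: the substitution $x\mapsto tx$ produces the factor $t^{n+d}$, and once polynomiality is known this pins down the degree, as you say. For polynomiality, however, the specific decomposition you invoke is imprecise in a way that matters. You claim a signed decomposition $[\Delta]=\sum_{\sigma\in\Sigma(n)}\s(\sigma)[S_\sigma(\Delta)]$ with one $n$-simplex per maximal cone, whose vertices are the origin or vertices $v_\tau(\Delta)$. But the natural ``starring from the origin'' decomposition of a simple polytope is indexed by a triangulation of $\partial\Delta$, i.e.\ by pieces of facets (rays of $\Sigma$), not by vertices of $\Delta$ (maximal cones of $\Sigma$); and for $n\ge 3$ the facets are simple, not simplicial, so further subdivision is required, producing simplices whose vertices are generically barycentres of faces rather than true vertices of $\Delta$. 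So either the indexing set, or the set of allowed vertices, or both, needs to change before the inner integration step can be carried out, and the signs have to be derived rather than asserted. You do flag that the rigorous convex-chain decomposition for non-convex multi-polytopes is precisely what the reference supplies, which is fair; but the concrete shape you wrote down would not survive a check already in the convex case in dimension~$3$. A cleaner route to polynomiality, avoiding any explicit triangulation, is induction on $n$ via facet derivatives: $\partial_{h_i}I_f(\Delta)$ is (up to normalization) the integral of $f$ over the $i$-th facet multi-polytope, which by induction is a polynomial of degree $(n-1)+d$ in quantities that are affine in $h_1,\dots,h_s$; integrating back recovers polynomiality of $I_f$, with the base case $n=0$ being trivial. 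That argument sidesteps the decomposition entirely and matches the style used elsewhere in Section~\ref{BKKproof}, where differentiation with respect to the $h_i$ coordinates is the basic operation (cf.~Lemma~\ref{Ider}).
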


We will further need the following result on the functional $I_f$. Let  $\partial_i= \partial/\partial_{h_i}$ be the partial derivatives along the coordinate vectors of $\Pm_{\Sigma,\Lambda} \simeq \R^s$.

\begin{lemma}
  \label{Ider}
  Let $I = \{ i_1, \ldots, i_r\} \subseteq \{ 1, \ldots, s \}$ be a subset and $k_1, \ldots, k_r$ positive integers.
  Let $\Delta\in \Pm_{\Sigma,\Lambda}$ be a multi-polytope and $\rho_{i_1},\ldots,\rho_{i_r}$ do not span a cone in $\Sigma$, then we have
  \[
    \partial_{i_1}^{k_1} \cdots \partial_{i_r}^{k_r} \left(I_f|_{\Pm_\Sigma}\right)(\Delta) = 0 \text{.}
  \]
  However, if $r = n$ and $\rho_{i_1}, \ldots, \rho_{i_n}$ span a cone in $\Sigma$ dual to the vertex $A\in M_\R$, we have 
  \[
    \partial_I \left(I_f|_{\Pm_\Sigma}\right)(\Delta) = \s(I)  f(A) \cdot |\det (e_{i_1}, \ldots, e_{i_n})| \text{.}
  \]
\end{lemma}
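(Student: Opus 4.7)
My plan is to prove the lemma by iteratively reducing partial derivatives of $I_f$ to integrals of $f$ over faces of the multi-polytope. The fundamental building block is a \emph{facet-differentiation formula}
\[
\partial_i I_f(\Delta) = \int_{F_i(\Delta)} f \, d\nu_i,
\]
where $F_i(\Delta)$ denotes the facet of $\Delta$ dual to the ray $\rho_i$ (living in the affine hyperplane $\langle \Lambda(\rho_i),x\rangle = h_\Delta(e_i)$ of $M_\R$) and $\nu_i$ is the lattice-induced Lebesgue measure on this hyperplane. For a genuine convex polytope $\Delta = \bigcap_j \{\langle \Lambda(\rho_j),x\rangle \leq h_j\}$, this is the standard first-variation computation: perturbing $h_i$ by $dh_i$ adds to $\Delta$ an infinitesimal slab over $F_i$ of lattice thickness $dh_i$. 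Since Theorem~\ref{ultimatepoly} guarantees that $I_f$ is a polynomial on $\Pm_{\Sigma,\Lambda}$ and the support functions of actual polytopes form a Zariski-dense subset, the formula extends to every $\Delta \in \Pm_{\Sigma,\Lambda}$, with the right-hand side interpreted via the convex-chain realization of multi-polytopes from~\cite{quasimulti, key}.

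Iterating this formula and using that the link of a cone $\sigma \in \Sigma$ is again a complete simplicial multi-fan, I obtain
\[
\partial_{i_1}\cdots\partial_{i_r} I_f(\Delta) = \int_{F_{i_1,\ldots,i_r}(\Delta)} f \, d\nu_{i_1,\ldots,i_r}
\]
whenever $\rho_{i_1},\ldots,\rho_{i_r}$ span a cone of $\Sigma$; here $F_{i_1,\ldots,i_r}(\Delta)$ is the codimension-$r$ face of $\Delta$ living in the affine subspace $M_\sigma$. If $\rho_{i_1},\ldots,\rho_{i_r}$ do not span a cone, then taking the smallest $k$ at which $\rho_{i_1},\ldots,\rho_{i_k}$ fail to form a cone, the ray $\rho_{i_k}$ does not index any facet of the previously-sliced face $F_{i_1,\ldots,i_{k-1}}(\Delta)$ (its corresponding facet is disjoint from this face), and the facet-differentiation formula produces an identically zero polynomial on $\Pm_{\Sigma,\Lambda}$. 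Since $I_f$ is polynomial, vanishing of the first-order mixed derivative $\partial_{i_1}\cdots\partial_{i_r} I_f$ immediately forces the vanishing of all higher mixed derivatives $\partial_{i_1}^{k_1}\cdots\partial_{i_r}^{k_r} I_f$, proving the first assertion.

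For the second assertion, suppose $r=n$ and $\sigma = \mathrm{cone}(\rho_{i_1},\ldots,\rho_{i_n})$ is a maximal cone with dual vertex $A$. Iterating the facet-differentiation formula $n$ times collapses the face to the single point $A$, so
\[
\partial_I I_f(\Delta) = f(A)\cdot c(\sigma)
\]
for a constant $c(\sigma)$ depending only on the cone. I compute $c(\sigma)$ by comparing the iterated lattice measure $\nu_{i_1,\ldots,i_n}$ at the vertex with the reference Lebesgue measure $\mu$ on $M_\R$: since $\Lambda(\rho_{i_1}),\ldots,\Lambda(\rho_{i_n})$ is a signed basis of $N$ by the characteristic condition, the orientation of this basis relative to the fixed orientation of $N_\R$ supplies the sign $\s(I)$, while re-expressing the supporting hyperplanes through the primitive ray generators $e_{i_j}$ rather than the characteristic vectors $\Lambda(\rho_{i_j})$ contributes the Jacobian correction $|\det(e_{i_1},\ldots,e_{i_n})|$.

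The main technical obstacle will be a careful treatment of the facet-differentiation formula in the multi-polytope setting, since multi-polytopes are convex chains rather than literal subsets of $M_\R$ and boundary formulas are usually stated only for genuine polytopes. I will handle this by verifying the formula on the dense subset of actual convex polytopes using classical first-variation arguments and then invoking polynomiality of $I_f$ together with the convex-chain construction of~\cite{quasimulti,key}; once this is in place, the bookkeeping of orientations and lattice indices that identifies $c(\sigma)$ is a direct calculation.
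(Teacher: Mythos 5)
Your overall plan --- prove a facet-differentiation formula for $I_f$, iterate it down to lower-dimensional faces, read off vanishing when the rays fail to span a cone, and compute the constant by collapsing to a vertex --- has the right shape and parallels the derivative computation for the volume polynomial in \cite{ayzenberg2016volume}. Note, though, that the paper itself does not prove Lemma~\ref{Ider}: it cites \cite{quasimulti,key}, so there is no in-text argument to compare yours against.

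There is a genuine gap in your extension of the facet-differentiation formula from convex polytopes to general multi-polytopes. You propose to verify it on ``the dense subset of actual convex polytopes'' and extend by polynomiality. But for a generic characteristic pair $(\Sigma,\Lambda)$ the space $\Pm_{\Sigma,\Lambda}$ contains \emph{no} actual convex polytopes at all: the characteristic vectors $\Lambda(\rho_i)$ are in general unrelated to the primitive ray generators of $\Sigma$, so the half-spaces $\langle \Lambda(\rho_j),x\rangle \le h_j$ do not cut out a bounded region whose normal fan refines $\Sigma$; and even in the purely toric case $\Lambda(\rho_i)=e_i$, the fan $\Sigma$ need not be polytopal, which is exactly the regime where multi-polytopes become necessary. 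So there is no dense subset of actual polytopes from which to extend, and the classical first-variation computation has nothing to latch onto. One must prove the facet-differentiation identity directly at the level of convex chains --- for instance via a Lawrence--Varchenko-type decomposition of the Duistermaat--Heckman density of the multi-polytope into signed simple cones at the vertices $A_\sigma$, which is what the cited constructions do. A secondary issue: your explanation of $|\det(e_{i_1},\ldots,e_{i_n})|$ as a Jacobian from re-expressing the supporting hyperplanes through $e_{i_j}$ rather than $\Lambda(\rho_{i_j})$ does not match the setup. The hyperplanes are $\langle\Lambda(\rho_j),x\rangle = h_j$ with $h_j=h_\Delta(e_j)$, so the change of variables at a vertex yields $|\det(\Lambda(\rho_{i_1}),\ldots,\Lambda(\rho_{i_n}))|^{-1}=1$; indeed the determinant factor in the statement must equal $1$ for consistency with Lemma~\ref{Fder} and Corollary~\ref{sqfree}, and your bookkeeping should reflect that.
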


\subsection{Proof of Theorem~\ref{BKK}}
For the reader's convenience, we recall the used notation.
Let $p \colon E \to B$ be a principal torus bundle with respect to an $n$-dimensional torus $T$ over a smooth compact orientable manifold $B$ of real dimension $k$.
The character lattice of $T$ we denote by $M$.
Let $\Sigma \subseteq \R^n$ be a simplicial complete fan with rays $\rho_1, \ldots, \rho_s$ and let $\Lambda:\Sigma(1) \to M$ be a characteristic map.
Let $X = X_{\Sigma,\Lambda}$ be the generalized quasitoric manifold corresponding to $(\Sigma,\Lambda)$ and denote the total space of the associated quasitoric bundle by $E_X$.
To keep notation simple we use the same notation $p \colon E_X \to B$ for the projection map of the toric bundle.
Fix $i \le \frac{k}{2}$ and a class $\gamma \in H^{k-2i}(B,\R)$.
The rays $\rho_i$ correspond to codimension $2$ characteristic submanifolds in $X$ which give rise to codimension $2$ submanifolds $D_i$ in $E_X$.
For a multi-polytope $\Delta \in \Pm_{\Sigma,\Lambda}$, we introduced
\[
  \rho(\Delta) = \sum_{i=1}^s h_\Delta(e_i) [D_i] \in H^2(E_X, \R) \text{,}
\]
where $[D_i]$ is the class dual to $D_i \subseteq E_X$ (oriented using omniorientation of $X_{\Sigma,\Lambda}$) and $h_\Delta$ is the support function of $\Delta$.
Further, we introduce
\[
  F_\gamma \colon \Pm_{\Sigma,\Lambda} \to \R; F_\gamma(\Delta) = \langle \rho(\Delta)^{n+i} \cdot p^*(\gamma),[E_X] \rangle \text{.}
\]
We noticed that $F_\gamma$ is a homogeneous polynomial of degree $n+i$ on $\Pm_{\Sigma,\Lambda}$. 

Recall that for any character $\lambda \in M$ we have an associated complex line bundle $\mathcal{L}_\lambda$ on $B$.
Taking Chern classes and extending by linearity, we obtain a morphism of vector spaces $c \colon M_\R \to H^2(B,\R)$.
Let $f_\gamma \colon M_\R \to \R$ be the function $f_\gamma(x) = \langle  c(x)^i \cdot \gamma, [B] \rangle$.
We defined a map $I_\gamma \colon \Pm_{\Sigma,\Lambda}\to \R$ to be 
\[
  I_\gamma(\Delta) = \int_\Delta f_\gamma(x) \diff\mu\text{.}
\]
By Theorem~\ref{ultimatepoly},  $I_\gamma$ is a homogeneous polynomial of degree $n+i$ on $\Pm_{\Sigma,\Lambda}$.

We prove Theorem~\ref{BKK} by induction on the parameter $0 \le i \le \frac{k}{2}$.

Let us start with the base case $i=0$.
If $i=0$, then $\gamma \in H^k(B, \R)$ is a multiple of the class dual to a point.
For simplicity, let us assume that this multiple is $1$.
So $p^*(\gamma)$ is the class dual to a fiber, which is a generilized quasitoric manifold, and hence $\rho(\Delta)^n\cdot p^*(\gamma)$ coincides with the self-intersection index of the codimesion 2 submanifold in $X$ corresponding to $\Delta$.
By the Theorem~\ref{thm:MABKK}, this can be computed by the normalized volume of $\Delta$ which equals to $n! \cdot I_\gamma(\Delta)$.
In other words, for $i=0$, Theorem~\ref{BKK} reduces to Theorem~\ref{thm:MABKK}.

As induction hypothesis, suppose that we know Theorem~\ref{BKK} for some $i-1\ge0$.
The induction step consists of proving that Theorem~\ref{BKK} is also true for $i$.
Since both $F_\gamma$ and $I_\gamma$ are homogeneous polynomials of the same degree $n+i$, in order to show equality between $(n+i)!\cdot I_\gamma(\Delta)$ and $i!\cdot F_\gamma(\Delta)$, it suffices to show that all their partial derivatives of order $n$ coincide.
In other words, it suffices to consider differential monomials $\partial_{i_1}^{k_1}\dots\partial_{i_r}^{k_r}$ where  $\partial_i= \partial/\partial_{h_i}$ are the partial derivatives along the coordinate vectors of $\Pm_{\Sigma,\Lambda} \simeq \R^s$ and $\sum_{i=1}^r k_i =n$.
Let us call the number $\sum_{i=1}^r (k_i-1)$ the \emph{multiplicity of the monomial} $\partial_{i_1}^{k_1}\dots\partial_{i_r}^{k_r}$.
In particular, a monomial has multiplicity $0$ if and only if it is square free.

The proof of equality of the partial derivatives of order $n$ of $(n+i)! \cdot I_\gamma(\Delta)$ and $i! \cdot F_\gamma(\Delta)$ is by induction on the multiplicity $m$ of the applied differential monomial.
We will refer to the induction over $i$ as the ``outer induction'' and we are going to call the induction over $m$ the ``inner induction''.

The base case of the inner induction (i.e., the case of square free differential monomials) is treated in the subsections.
The result of these calculations is summarized in Corollary~\ref{sqfree}.


\subsection{Differentiation with respect to square free monomials}

The results of differentiation of $I_\gamma$ with respect to square free differential monomials are given in Lemma~\ref{Ider}. Now we verify the base case of square free differential monomials for $F_\gamma$:

\begin{lemma}
  \label{Fder}
  Let $I = \{ i_1, \ldots, i_r\} \subseteq \{ 1, \ldots, s \}$ be a subset such that $\rho_{i_1},\ldots,\rho_{i_r}$ do not span a cone in $\Sigma$ and $k_1, \ldots, k_r$ are positive integers.
  If $\Delta \in \Pm_{\Sigma,\Lambda}$ is a multi-polytope, then we have
  \[
    \partial_{i_1}^{k_1} \cdots \partial_{i_r}^{k_r} \left(F_\gamma|_{\Pm_{\Sigma,\Lambda}}\right)(\Delta) = 0 \text{.}
  \]
  If $r = n = \dim(N_\R)$ and $\rho_{i_1}, \ldots, \rho_{i_n}$ span a cone in $\Sigma$ dual to the vertex $A\in M_\R$, we have 
  \[
    \partial_I \left(F_\gamma|_{\Pm_{\Sigma,\Lambda}}\right)(\Delta) = \s(I) \frac{(n+i)!}{i!}f_\gamma(A)\text{.}
  \]
\end{lemma}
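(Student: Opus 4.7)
The plan is to compute the partial derivatives of $F_\gamma$ directly from its polynomial expression in the coordinates $h_j := h_\Delta(e_j)$ on $\Pm_{\Sigma,\Lambda}$. Since $\rho(\Delta) = \sum_{j=1}^s h_j[D_j]$, the multinomial expansion of $F_\gamma(\Delta) = \langle \rho(\Delta)^{n+i}\cdot p^*(\gamma), [E_X]\rangle$ shows that every term surviving the differentiation $\partial_{i_1}^{k_1}\cdots\partial_{i_r}^{k_r}$ is divisible by the cup product $[D_{i_1}]\cdots[D_{i_r}]$. Consequently, for the first (vanishing) part of the lemma it suffices to prove $[D_{i_1}]\cdots[D_{i_r}] = 0$ in $H^*(E_X,\R)$ whenever the rays $\rho_{i_1},\ldots,\rho_{i_r}$ do not span a cone of $\Sigma$. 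I would establish this by a transversality argument: since $D_{i_j} = E\times_T D^X_{i_j}$ and the fiber characteristic submanifolds $D^X_{i_j}\subset X$ intersect transversally by construction of a generalized quasitoric manifold, this transversality transfers to $E_X$, where the intersection is empty precisely when the rays fail to form a cone; hence the Poincar\'e dual of the (empty, transverse) intersection vanishes.

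For the second part, with $r = n$ and all $k_j = 1$, the same differentiation yields
\[
\partial_I F_\gamma(\Delta) = \tfrac{(n+i)!}{i!}\bigl\langle [D_{i_1}]\cdots[D_{i_n}]\cdot \rho(\Delta)^i\cdot p^*(\gamma),\,[E_X]\bigr\rangle.
\]
The crucial move is to replace $\rho(\Delta)$ by $\rho(A)$, where $A\in M_\R$ is regarded as a multi-polytope via the global linear function it defines on $\Sigma$. Since $A$ is by definition the vertex of $\Delta$ dual to $\sigma$, one has $h_\Delta(e_{i_j}) = \langle A,e_{i_j}\rangle$ for $j = 1,\ldots,n$, so
\[
\rho(\Delta)-\rho(A) = \sum_{j\notin I}\bigl(h_\Delta(e_j)-\langle A,e_j\rangle\bigr)[D_j].
\]
For every $j\notin I$ the $n+1$ rays $\rho_{i_1},\ldots,\rho_{i_n},\rho_j$ span $\R^{n+1}$ and thus cannot lie in a common cone; the vanishing established in the first part then forces $[D_{i_1}]\cdots[D_{i_n}]\cdot[D_j]=0$, which allows the substitution.

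The final computation combines three ingredients: Proposition~\ref{cherneq} gives $\rho(A) = p^*c(A)$; fiberwise $[D^X_{i_1}]\cdots[D^X_{i_n}] = \s(I)[\pt]^*$, so in the bundle the product becomes $\s(I)[S_\sigma]$, where $S_\sigma := E\times_T\{p_\sigma\}$ is the section of $p$ through the torus-fixed point $p_\sigma$ dual to $\sigma$; and since $p|_{S_\sigma}$ is a diffeomorphism onto $B$, the projection formula yields
\[
\partial_I F_\gamma(\Delta) = \tfrac{(n+i)!}{i!}\,\s(I)\,\langle c(A)^i\cdot\gamma,\,[B]\rangle = \tfrac{(n+i)!}{i!}\,\s(I)\, f_\gamma(A).
\]
The main obstacle I anticipate is the transversality/vanishing $[D_{i_1}]\cdots[D_{i_r}] = 0$ that underpins both parts: all of the bundle geometry enters there, and one must verify the transverse intersection directly in $E_X$ without circularly appealing to any generalized Stanley--Reisner description of $H^*(E_X,\R)$, which is itself one of the goals of the paper.
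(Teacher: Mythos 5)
Your plan is essentially identical to the paper's proof: expand $F_\gamma$ multinomially to extract the factor $[D_{i_1}]\cdots[D_{i_r}]$, kill it by transversality and emptiness of the intersection when the rays do not span a cone, and in the square-free $r=n$ case substitute $\rho(A)$ for $\rho(\Delta)$ (your formula $\rho(\Delta)-\rho(A)=\sum_{j\notin I}(\,\cdot\,)[D_j]$ is the same as the paper's translation $\widetilde\Delta=\Delta-A$), finishing with Proposition~\ref{cherneq} and the diffeomorphism $p|_{E_A}\colon E_A\to B$. One small wording slip: the $n+1$ rays $\rho_{i_1},\ldots,\rho_{i_n},\rho_j$ live in $\R^n$ and cannot span $\R^{n+1}$; the correct reason they fail to form a cone is that a simplicial cone in $\R^n$ has at most $n$ linearly independent generators.
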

\begin{proof}
  Without loss of generality we may assume that $i_j = j$ for $j = 1, \ldots, r$. For a monomial $\partial_1^{k_1} \cdots\partial_r^{k_r}$, let $\partial_I = \partial_1 \cdots \partial_r$ be the corresponding square free monomial.
  It is enough to show that $\partial_I F_\gamma(\Delta)  = 0$ in order  to prove that $\partial_1^{k_1} \cdots \partial_r^{k_r} F_\gamma(\Delta)  = 0$.

  We compute the expansion of the polynomial $F_\gamma$ at $\Delta$.
  This amounts to expressing $F_\gamma(\Delta+ \sum_{i=1}^s \lambda_i h_i)$ in terms of the monomials $\lambda_1^{\alpha_1} \cdots \lambda_s^{\alpha_s}$ for non-negative integers $\alpha_1, \ldots, \alpha_s$.
  A straightforward computation yields:
  \begin{align*}
    F_\gamma\left( \Delta+\sum_{i=1}^s\lambda_ih_i \right) &= \left( \rho(\Delta) + \sum_{i=1}^s \lambda_i [D_i] \right)^{n+i} \cdot p^*(\gamma)\\
    &= \sum_{\alpha_0 + \alpha_1 + \ldots + \alpha_s = n+i} \binom{n+i}{\alpha_0, \alpha_1, \ldots, \alpha_s} \cdot \rho(\Delta)^{\alpha_0} \cdot [D_1]^{\alpha_1} \cdots [D_s]^{\alpha_s} \cdot p^*(\gamma) \cdot \lambda_1^{\alpha_i} \cdots \lambda_s^{\alpha_s}\\
    &= \frac{(n+i)!}{(n+i-r)!} \cdot \rho(\Delta)^{n+i-r} \cdot [D_1] \cdots [D_r] \cdot p^*(\gamma) \cdot \lambda_1 \cdots \lambda_r + \text{(other terms)}\\
  \end{align*}
  where $\binom{n+i}{\alpha_0,\alpha_1,\ldots,\alpha_s}=\frac{(n+i)!}{\alpha_0! \cdot \alpha_1! \cdots \alpha_s!}$ denotes the usual multinomial coefficient.
  The derivative $\partial_I F_\gamma(\Delta)$ is equal to the coefficient in front of the monomial $\lambda_1 \cdots \lambda_r$ in the expression $F_\gamma(\Delta + \sum_{i=1}^s \lambda_i h_i)$:
  \[
    \partial_I F_\gamma(\Delta)= \frac{(n+i)!}{(n+i-r)!} \left\langle \rho(\Delta)^{n+i-r} \cdot [D_1] \cdots [D_r] \cdot p^*(\gamma), [E_X]\right\rangle\text{.}
  \]
  Characteristic submanifolds $D_1,\ldots, D_r$ intersect transversely in $E_X$, so the product $[D_1] \cdots [D_r]$ is the class Poincar\'{e} dual to their intersection.
  In the case that $e_1, \ldots, e_r$ do not generate a cone in $\Sigma$ the such intersection is empty, and so $\partial_I F_\gamma(\Delta)=0$. 

  For the proof of the second part, let $r = n = \dim(N_\R)$.
  We have
   \[
    \partial_I F_\gamma(\Delta)= \frac{(n+i)!}{i!} \left \langle \rho(\Delta)^{i} \cdot [D_1] \cdots [D_n] \cdot p^*(\gamma) , [E_X]\right\rangle\text{.}
  \]
 
 If $e_1, \ldots, e_n$ generate a cone in $\Sigma$ dual to the vertex $A$ of $\Delta$, then $[D_1] \cdots [D_n] = [E_A]$, where $E_A = E \times_T A$ is the torus invariant submanifold in $E_X$ corresponding to $A$. Note that the omniorientation of $X$ defines an orientation of $E_A$, hence the class $[E_A]$ is well-defined. In particular, the restriction of  the projection map $p\colon E_A \to B$ is a diffeomorphism which is orientation preserving if $\s(I)=1$ and orientaion reversing if $\s(I)=-1$.

Now, let $\widetilde \Delta =\Delta - A$ be a multi-polytope which is the translation of the multi-polytope $\Delta$ for which the vertex $A$ is at the origin.
  Since the vertex of $\widetilde \Delta$ corresponding to $A$ is at the origin, we get
  \[
    h_{\widetilde \Delta}(e_{1}) = \ldots = h_{\widetilde \Delta}(e_{n}) = 0, \qquad \text{and so} \qquad \rho(\widetilde \Delta) = \sum_{j>n}h_{\widetilde \Delta}(e_j) \cdot [D_j].
  \]
  Hence, by the first part, we get $\rho(\widetilde \Delta) \cdot [D_1] \cdots [D_n] = 0$ as there is no cone in $\Sigma$ with more than $n$ rays.
  Therefore:
  \[
    \rho(\Delta)^i \cdot [D_1] \cdots [D_n]  \cdot p^*(\gamma)= \rho(\widetilde \Delta + A)^i \cdot [D_1] \cdots [D_n] \cdot p^*(\gamma)= \rho(A)^i  \cdot [E_A] \cdot p^*(\gamma) \text{.}
  \]
  By Proposition~\ref{cherneq}, $\rho(A)=p^* c(A)$.
  Since $p\colon E_A\to B$ is a diffeomorphism, we get:
  \[
    \left\langle\rho(A)^i  \cdot p^*(\gamma)  \cdot [E_A]  , [E_X]\right\rangle = \left\langle(p^* c(A))^i\cdot p^*(\gamma)  \cdot [E_A] , [E_X]\right\rangle = \left\langle c(A)^i\cdot \gamma , \s(I)[B]\right\rangle= \s(I)f_\gamma(A), 
  \]
  and therefore $\partial_I F_\gamma(\Delta) = \s(I)\frac{(n+i)!}{i!}f_\gamma(A)$.
\end{proof}

The base case of the inner induction is an immediate corollary of Lemmas~\ref{Ider} and~\ref{Fder}:
\begin{corollary}[Base case of the inner induction]
  \label{sqfree} 
  For any $i\le \frac{k}{2}$, any $\gamma \in H^{k-2i}(B,\R)$ and any square free differential monomial $\partial_I=\partial_{i_1}\dots\partial_{i_n}$ of order $n$ (where $I = \{ i_1, \ldots, i_n\} \subseteq \{1, \ldots, s\}$), we have:
  \[
    \partial_I \left( (n+i)!\cdot I_\gamma(\Delta)\right)= \partial_I \left( i!\cdot F_\gamma(\Delta)\right) \text{.}
  \]
\end{corollary}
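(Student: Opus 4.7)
The plan is to read off the corollary directly from Lemmas~\ref{Ider} and~\ref{Fder}. Both lemmas express $\partial_I$ applied to a polynomial of degree $n+i$ on $\Pm_{\Sigma,\Lambda}$ as a case-split on whether the rays $\rho_{i_1},\ldots,\rho_{i_n}$ span a maximal cone of $\Sigma$, and the two cases match after multiplying by the corresponding factorials. So the strategy is just: apply both lemmas, compare the two cases.

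In the non-cone case, both Lemma~\ref{Ider} (applied to $f=f_\gamma$) and Lemma~\ref{Fder} give zero; multiplying by $(n+i)!$ or $i!$ leaves both sides zero, so the equality is trivial.

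In the cone case, let $A$ be the vertex of $\Delta$ dual to the cone generated by $\rho_{i_1},\ldots,\rho_{i_n}$. Lemma~\ref{Ider} gives
\[
\partial_I I_\gamma(\Delta) \;=\; \s(I)\, f_\gamma(A)\cdot |\det(\Lambda(\rho_{i_1}),\ldots,\Lambda(\rho_{i_n}))|,
\]
and since $\Lambda$ is a characteristic map, the vectors $\Lambda(\rho_{i_1}),\ldots,\Lambda(\rho_{i_n})$ form a $\Z$-basis of $N$, so the absolute determinant is $1$. Hence $\partial_I\bigl((n+i)!\,I_\gamma(\Delta)\bigr) = (n+i)!\,\s(I)\,f_\gamma(A)$. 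On the other side, Lemma~\ref{Fder} gives $\partial_I F_\gamma(\Delta) = \s(I)\,\tfrac{(n+i)!}{i!}\,f_\gamma(A)$, whence $\partial_I\bigl(i!\,F_\gamma(\Delta)\bigr) = (n+i)!\,\s(I)\,f_\gamma(A)$, agreeing with the previous line.

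There is no real obstacle: the substance of the base case has been absorbed into the two preceding lemmas, and the corollary is a one-line comparison of factorial prefactors once the characteristic-map condition is used to normalize the determinant to $1$. Its purpose in the global argument is to anchor the inner induction on the multiplicity of the differential monomial, which will later reduce higher-multiplicity monomials to square-free ones using the linear relations generating the ideal $J$ from Theorem~\ref{cohgenqtrmfd} together with the outer induction hypothesis on $i$.
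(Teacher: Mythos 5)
Your proof is correct and matches the paper's intent exactly: the paper states the corollary as an ``immediate corollary'' of Lemmas~\ref{Ider} and~\ref{Fder}, and you have simply written out the two-case comparison (including the observation that the characteristic-map condition forces $|\det(e_{i_1},\ldots,e_{i_n})|=1$) that the paper leaves implicit.
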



\subsection{ The inner induction step}
In this subsection, the index set $I \subseteq \{1, \ldots, s \}$ is considered as being a multiset.
As the induction hypothesis, suppose that $\partial_I \left( (n+i)!\cdot I_\gamma(\Delta)\right) = \partial_I \left( i!\cdot F_\gamma(\Delta) \right)$, for each differential monomial $\partial_I$ of multiplicity $m-1\ge0$.
It remains to show that the equality is true for differential monomials of multiplicity $m$.
As before, to keep notation simple assume $i_j=j$ for $j =1, \ldots, r$, so that $\partial_I=\partial_{1}^{k_1}\ldots\partial_{r}^{k_r}$ for positive integers $k_1, \ldots, k_r$.
By reordering the coordinates of $\Pm_{\Sigma,\Lambda}$, we may assume $k_1>1$. 
By Lemmas~\ref{Ider} and~\ref{Fder}, it suffices to consider the case where the vectors $e_1, \ldots, e_r$ form a cone in $\Sigma$ (as otherwise $\partial_I I_\gamma(\Delta) = 0 = \partial_I F_\gamma(\Delta)$).

We are going to express $\partial_1$ as a combination of Lie derivative $L_v$ (for some $v \in M_\R$) and other partial derivatives.
Then the (inner) induction step will follow by an explicit computation of $L_vI_\gamma(\Delta)$ and $L_vF_\gamma(\Delta)$.

As $e_1, \ldots, e_r$ form a cone in the smooth fan $\Sigma$, they can be completed to a basis of $N_\R$.
We take $v$ to be the first vector of the dual basis of $M_\R$.
Then $\langle v, e_1 \rangle = 1$, and  $\langle v, e_j \rangle = 0$ for $j = 2, \ldots, r$.
Since $v\in M_\R$ (considered as an element of $\Pm_{\Sigma,\Lambda}$) is given by $v = \sum_{i=1}^s \langle v, e_i \rangle h_i$, we obtain that $L_v = \sum_{i=1}^s \langle v,e_i \rangle \partial_i$, and thus $\partial_1 = L_v - \sum_{j>r}\langle v, e_j \rangle \partial_j$.
We get:
\begin{align*}
  \partial_I = \partial_1^{k_1} \ldots \partial_r^{k_r} &= \left( L_v - \sum_{j>r}\langle v, e_j \rangle \partial_j \right) \partial_1^{k_1-1} \cdot \partial_2^{k_2} \cdots \partial_r^{k_r} \\
  &= L_v \cdot \partial_1^{k_1-1} \cdot \partial_2^{k_2} \cdots \partial_r^{k_r} - \sum_{j>r}\langle v, e_j \rangle \cdot \partial_1^{k_1-1} \cdot \partial_2^{k_2} \cdots \partial_r^{k_r} \cdot \partial_j \text{.}
\end{align*}
Since $k_1>1$ and $j>r$, each monomial in the sum $\sum_{j>r}\langle v, e_j \rangle \partial_1^{k_1-1} \cdot \partial_2^{k_2} \cdots \partial_r^{k_r} \cdot \partial_j$ has multiplicity $m-1$, so by the  inner induction hypothesis, we get
\[
  (n+i)! \cdot \left( \sum_{j>r} \langle v, e_j \rangle \partial_1^{k_1-1} \cdot \partial_2^{k_2} \cdots \partial_r^{k_r} \cdot \partial_j \right) \cdot I_\gamma(\Delta) = i! \cdot \left( \sum_{j>r} \langle v, e_j \rangle \partial_1^{k_1-1} \cdot \partial_2^{k_2} \cdots \partial_r^{k_r} \cdot \partial_j \right) \cdot F_\gamma(\Delta) \text{.}
\]
It remains to consider the first summand:
\begin{lemma}
  \label{Lvind}
  In the situation above, we have $(n+i)! \cdot L_v I_\gamma(\Delta) = i! \cdot L_v F_\gamma(\Delta)$.
\end{lemma}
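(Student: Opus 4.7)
The plan is to compute $L_vI_\gamma(\Delta)$ and $L_vF_\gamma(\Delta)$ separately and to express both in terms of the \BKK quantities at index $i-1$ associated to the class $\gamma' := c(v)\cdot\gamma \in H^{k-2(i-1)}(B,\R)$, after which the outer induction hypothesis closes the argument. Throughout we regard $v\in M_\R$ as sitting in $\Pm_{\Sigma,\Lambda}$ via the inclusion of globally linear functions.

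For $F_\gamma$, since $\rho\colon \Pm_{\Sigma,\Lambda}\to H^2(E_X,\R)$ is linear we have $\rho(\Delta+tv)=\rho(\Delta)+t\rho(v)$. Extending Proposition~\ref{cherneq} by linearity from $M$ to $M_\R$ gives $\rho(v)=p^{*}c(v)$. Substituting into the definition of $F_\gamma$ and differentiating at $t=0$ yields, using that $p^{*}$ is a ring homomorphism,
\[
 L_vF_\gamma(\Delta)=(n+i)\,\bigl\langle \rho(\Delta)^{n+i-1}\cdot p^{*}(c(v)\cdot\gamma),[E_X]\bigr\rangle=(n+i)\,F_{\gamma'}(\Delta),
\]
where on the right $F_{\gamma'}$ denotes the $F$-polynomial at index $i-1$ attached to $\gamma'$.

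For $I_\gamma$, the key observation is that the inclusion $M_\R\hookrightarrow\Pm_{\Sigma,\Lambda}$ corresponds geometrically to translation of multi-polytopes: $\Delta+tv$ is the translate of $\Delta$ by $tv$. The translation-equivariance of the integration functional $I_f$ from the convex-chain construction in \cite{quasimulti} then gives $I_\gamma(\Delta+tv)=\int_\Delta f_\gamma(x+tv)\diff\mu$. Since $c\colon M_\R\to H^2(B,\R)$ is linear and $f_\gamma(x)=\langle c(x)^{i}\cdot\gamma,[B]\rangle$, the product rule produces $L_v f_\gamma(x)=i\,\langle c(x)^{i-1}\cdot c(v)\cdot\gamma,[B]\rangle=i\,f_{\gamma'}(x)$, whence
\[
 L_vI_\gamma(\Delta)=i\,I_{\gamma'}(\Delta).
\]

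To conclude, the outer induction hypothesis applied to $\gamma'\in H^{k-2(i-1)}(B,\R)$ at index $i-1$ reads $(n+i-1)!\,I_{\gamma'}(\Delta)=(i-1)!\,F_{\gamma'}(\Delta)$. Multiplying both sides by $i(n+i)$ gives $i\cdot(n+i)!\,I_{\gamma'}(\Delta)=(n+i)\cdot i!\,F_{\gamma'}(\Delta)$, which combined with the two displays above is exactly the desired identity $(n+i)!\,L_vI_\gamma(\Delta)=i!\,L_vF_\gamma(\Delta)$. The one genuinely nontrivial ingredient is the translation identity $I_\gamma(\Delta+tv)=\int_\Delta f_\gamma(x+tv)\diff\mu$: for honest polytopes it is an immediate change of variables, but for multi-polytopes it is the place where the translation-equivariant construction of $I_f$ from \cite{quasimulti} must be invoked. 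Everything else is formal manipulation of linear maps and Chern classes.
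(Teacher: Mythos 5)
Your proof is correct and follows essentially the same route as the paper: compute $L_vF_\gamma(\Delta)=(n+i)F_{c(v)\gamma}(\Delta)$ via linearity of $\rho$ and Proposition~\ref{cherneq}, compute $L_vI_\gamma(\Delta)=i\,I_{c(v)\gamma}(\Delta)$ via the translation identity for multi-polytope integrals, and close with the outer induction hypothesis at index $i-1$. Your explicit flagging of the translation-equivariance of $I_f$ as the one step that genuinely relies on the convex-chain construction is a nice touch that the paper uses silently.
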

\begin{proof}
  A direct calculation shows:
  \begin{align*}
    L_v  I_\gamma(\Delta) &= \partial_t \Bigg|_{t=0} \left(  \int_{\Delta+tv} c(x)^{i} \cdot \gamma \diff \mu \right) =  \partial_t\Bigg|_{t=0} \left(  \int_{\Delta} c(x+tv)^{i} \cdot \gamma \diff \mu \right) =\\
    &= \partial_t \Bigg|_{t=0}\left(  \int_{\Delta} \left(\sum_{a=0}^i \binom{i}{a} t^a\cdot c(v)^{a}c(x)^{i-a} \right) \cdot \gamma \diff \mu \right) =  \int_{\Delta}  i\cdot   c(v)c(x)^{i-1}  \cdot \gamma \diff \mu\\
    &= i\cdot\int_{\Delta} c(x)^{i-1}  \cdot  (c(v) \cdot \gamma) \diff \mu = i\cdot I_{c(v)\cdot \gamma}(\Delta) \text{.}
  \end{align*}
  Similarly, by a direct calculation and Proposition~\ref{cherneq}, we get
  \begin{align*}
    L_v F_\gamma(\Delta) &= \partial_t \Big|_{t=0} \Big(\rho(\Delta+tv)^{n+i}\cdot p^*(\gamma)  \Big)=
\partial_t \Big|_{t=0} \Big((\rho(\Delta)+t\rho(v))^{n+i}\cdot p^*(\gamma)  \Big)=\\
    &= \partial_t \Bigg|_{t=0} \left( \left( \sum_{a=0}^{n+i} \binom{n+i}{a} t^a \cdot \rho(v)^a \cdot \rho(\Delta)^{n+i-a}  \right)\cdot p^*(\gamma) \right) = (n+i)\cdot \rho(v) \cdot \rho(\Delta)^{n+i-1} \cdot p^*(\gamma)\\
    &= (n+i)\cdot  \rho(\Delta)^{n+i-1} \cdot p^*(c(v) \cdot \gamma) = (n+i)\cdot  F_{c(v)\gamma}(\Delta).
  \end{align*}
  Since $c(v) \cdot \gamma \in H^{k-2(i-1)}(B,\R)$, by the induction hypothesis of the outer induction, we have
  \[
    (n+i-1)! \cdot I_{c(v) \cdot \gamma}(\Delta) = (i-1)!\cdot F_{c(v) \cdot \gamma}(\Delta).
  \]
  Therefore,
  \[
    (n+i)! \cdot L_v I_\gamma(\Delta) = (n+i)! \cdot  i \cdot I_{c(v) \cdot \gamma}(\Delta) = i! \cdot (n+i) \cdot F_{c(v) \cdot \gamma}(\Delta) =  i! \cdot  L_v F_\gamma(\Delta) \text{.} \qedhere
  \]
\end{proof}


\section{\texorpdfstring{Graded-commutative $n$-self-dual  algebras}{Graded-commutative n-self-dual algebras}}
\label{commalg}

In this section we briefly summarize some algebraic results obtained in the recent papers~\cite{hof2020} and~\cite{KhoM} that we are going to rely on when describing cohomology rings of quasitoric bundles in a way similar to the classical approach by Khovanskii and Pukhlikov~\cite{PK92}.

\subsection{The general case}

Here, we introduce all the necessary algebraic preliminaries that will be used to describe the entire cohomology rings of quasitoric bundles. Our presentation follows that in~\cite{hof2020}.

Recall that a graded $\K$-algebra $A = A^0 \oplus A^1 \oplus \cdots \oplus A^k \oplus \cdots$ over a field $\K$ is called \emph{graded-commutative} if for any pair of homogeneous elements $x$, $y$, the following relation holds:
  \[
    xy = (-1)^{\deg(x)\deg(y)} yx
  \]
where $\deg(x)$ (resp.~$\deg(y)$) denotes the degree of $x$ (resp.~of~$y$).

\begin{definition}  
  Let  $n$ be a natural number. A graded-commutative $\K$-algebra $A$ is said to be \emph{Poincar\'{e} $n$-self-dual} (or just, \emph{$n$-self-dual}) if the following conditions hold:
  \begin{enumerate}
  \item The algebra $A$ has a multiplicative unit element $e$ which is homogeneous of degree zero, i.e. $e \in A^0 \subseteq A$.
  \item The homogeneous components $A^k$ for $k>n$ vanish, i.e. $A^k=0$ for $k>n$, and $\dim_{\K}(A^n) = 1$.
  \item The pairing $A^k \times A^{n-k} \rightarrow A^n$ induced by multiplication in the algebra $A$ is non-degenerate for $0 \le k \le n$.
  \end{enumerate}
\end{definition}

Here is a key example we are most interested in throughout this paper.

\begin{example}
  \label{ex2}
  Let $M$ be a connected compact oriented $n$-dimensional manifold.
  By Poincar\'{e} duality, the cohomology ring $A = H^*(M, \K)$ is an $n$-self-dual graded-commutative $\K$-algebra.
  Moreover, $A$ is equipped with two extra structures:
  \begin{enumerate}
  \item A $\K$-linear function $\ell_* \colon A \to \K$  given as follows.
    For $\alpha\in A^n$, we let $\ell_*(\alpha)$ be equal to the value of the cohomology class $\alpha$ on the fundamental class of the oriented manifold $M$.
    For $\alpha\in A^k, k\neq n$, we set $\ell_*(\alpha)=0$. We extend $\ell_*$ to the whole $A$ by linearity.
  \item A $\K$-bilinear intersection form $F_{\ell_*}$ on $A$ defined by the identity $F_{\ell_*}(\alpha, \beta) = \ell_*(\alpha \cdot \beta)$ (recall that we denote the cup product in the cohomology ring $A$ by ``$\cdot$'').
\end{enumerate}
\end{example}

\begin{remark}
Note that $F_{\ell_*}$ is non-degenerate; in fact, the non-degeneracy of $F_{\ell_*}$ is equivalent to the Poincar\'{e} duality on $A$. 
\end{remark}

Let $ B = B^0 \oplus B^1 \oplus \cdots \oplus B^k \oplus \cdots$ be a graded-commutative $\K$-algebra with multiplicative unit element $e \in B^0$ and $\dim_{\K}(B^0) = 1$. Our goal is to describe all $n$-self dual factor-algebras of $B$.
To that extent we introduce the following notion.

\begin{definition} 
  An ideal $I\subseteq B$  is called \emph{$n$-self-dual} (or just, \emph{$n$-sd ideal}) if $I$ is a two-sided homogeneous ideal in $B$ and the factor-algebra $A = B/I$ is $n$-self-dual.
\end{definition}

It's easy to see that if $I\subseteq B$ is an $n$-sd ideal, then $L=I\cap B^n$ is a hyperplane in $B^n$. As hyperplanes are in correspondence with linear functions, we introduce the next definition.

\begin{definition}  
  A  linear function $\ell \colon B\to \K$ on a graded-commutative $\K$-algebra $B$ is \emph{$n$-homogeneous} if it is not identically equal to zero on $B$, but for any $m \ne n$ its restriction to the homogeneous component $B^m$ vanishes.
\end{definition}

Denote by $L_\ell\subseteq B^n$ the hyperplane in $B^n$ defined by the identity 
$$
L_\ell: = \{\ell = 0\} \cap B^n.
$$
Observe that any hyperplane $L \subseteq B^n$ is equal to a hyperplane of the type $L_\ell$ for a unique (up to a non-zero scalar multiple) $n$-homogeneous linear function $\ell$.

In the rest of this subsection, we explain how to obtain an $n$-self-dual factor-algebra from an $n$-homogeneous linear function $\ell \colon B \to \K$.

\begin{definition}
  \label{def4} 
  For an $n$-homogeneous linear function $\ell \colon B \to \K$ let $I_1(L_\ell)$ and $I_2(L_\ell)$ be the subsets of $B$ defined by the following conditions:
  \begin{enumerate}
  \item an element $a \in B$ belongs to $I_1(L_\ell)$ if and only if $\ell(ab) = 0$ for all $b \in B$;
  \item an element $b \in B$ belongs to $I_2(L_\ell)$ if and only if $\ell(ab) = 0$ for all $a \in B$.
  \end{enumerate}
\end{definition}

For any hyperplane $L_\ell\subseteq B^n$ one has:
$$
I_1(L_\ell)=I_2(L_\ell)=:I(L_\ell)\text{ is an }n-\text{sd ideal in }B,
$$
where $\ell$ is an $n$-homogeneous linear function. Note that the ideals $I_1(L_\ell)$ and $I_2(L_\ell)$ depend only on the hyperplane $L: = L_\ell$, and so the notation $I(L):=I(L_\ell)$ makes sense.
 
\begin{definition}
  Let $\ell \colon B \to \K$ be an $n$-homogeneous linear function on a graded-commutative algebra $B$.
  The \emph{Frobenius bilinear form} $F_\ell \colon B \times B \to \K$ associated with $\ell$ is the form defined by the identity $F_\ell(a,b) = \ell(a \cdot b)$.
\end{definition}

It is easy to see that the $n$-sd ideal $I(L)$ coincides with the left and also the right kernel of $F_\ell$ for $L=L_\ell$. 

It turns out that the set of $n$-sd factor-algebras $A=B/I$ equipped with non-degenerate intersection forms is in one-to-one correspondence with the set of $n$-homogeneous linear functions on $B$. More precisely, the following fundamental result takes place.

\begin{theorem}
  \label{thm-nsdquatient}
  Let $B$ be a graded-commutative algebra over a field $\K$, and let $A$ be an $n$-sd factor-algebra of $B$ with a chosen isomorphism $\phi\colon A^n\to \K$.
  Let $\rho \colon B \to A$ be the natural epimorphism of algebras.
  Then
  \[
    A\simeq B/I(L_\ell),
  \]
  where $\ell$ is the $n$-homogeneous linear function on $B$ defined by
  \[
    \ell\colon B^n\to \K, \quad \ell(b)=\phi(\rho(b)),
  \]
  and extended by 0 to $B^i$ with $i\ne n$.

  Moreover, for any $n$-homogeneous linear function $\ell$, the algebra $A=B/I(L_\ell)$ is $n$-self dual and the Frobenius form $F_{\ell}$ induces a non-degenerate pairing on $A$.
\end{theorem}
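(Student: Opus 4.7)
The plan is to establish the one-to-one correspondence between $n$-sd quotients of $B$ (equipped with an identification of the top piece with $\K$) and $n$-homogeneous linear functions $\ell\colon B\to\K$. This is symmetric in flavor: the map $A\mapsto \ell\coloneqq\phi\circ\rho|_{B^n}$ and the map $\ell\mapsto B/I(L_\ell)$ should be mutually inverse. I will prove the two directions separately, treating the ``moreover'' statement as one half and the main identity $A\simeq B/I(L_\ell)$ as the other.

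For the first half, fix an $n$-sd quotient $A=B/J$ with $\phi\colon A^n\to\K$, and let $\ell(b)=\phi(\rho(b))$ for $b\in B^n$, extended by zero to other degrees. The containment $J\subseteq I(L_\ell)$ is immediate: if $b\in J$ then $\rho(ab)=\rho(a)\rho(b)=0$ for every $a\in B$, so $\ell(ab)=0$. For the reverse containment, take $b\in I(L_\ell)$ and decompose $b=\sum_i b_i$ with $b_i\in B^i$; by the definition of an $n$-sd algebra, it is enough to show $\rho(b_i)=0$ for $0\le i\le n$, since $A^k=0$ for $k>n$. For $i=n$, test with $a=e$ to get $\phi(\rho(b_n))=0$, hence $\rho(b_n)=0$. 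For $i<n$, pick any $c\in A^{n-i}$ and, using surjectivity of $\rho$, lift it to $a\in B^{n-i}$; then $0=\ell(ab_i)=\phi(c\cdot\rho(b_i))$, so $c\cdot\rho(b_i)=0$ in $A^n$ for every $c\in A^{n-i}$. The non-degeneracy of the pairing $A^i\times A^{n-i}\to A^n$ forces $\rho(b_i)=0$, and hence $b\in J$.

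For the ``moreover'' half, start with an arbitrary $n$-homogeneous linear function $\ell$ and set $A=B/I(L_\ell)$. Since $I(L_\ell)$ is (by the preceding discussion in the paper) precisely the left and right kernel of the Frobenius form $F_\ell$, this form descends to a non-degenerate bilinear form on $A$. I then verify the three defining conditions of $n$-self-duality. The unit element $e\in B^0$ does not lie in $I(L_\ell)$ because $\ell(e\cdot b)=\ell(b)\not\equiv 0$, so $A^0\ne 0$ and still has dimension at most $\dim B^0=1$. For the top piece, a direct check shows $I(L_\ell)\cap B^n=L_\ell$: the inclusion ``$\subseteq$'' follows by testing with $a=e$, and the reverse inclusion uses $n$-homogeneity of $\ell$ to kill $\ell(ab)$ for any $a\in B^i$ with $i>0$, together with the one-dimensionality of $B^0$. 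Hence $A^n\cong B^n/L_\ell$ is one-dimensional. Furthermore, for $k>n$ every $b\in B^k$ lies in $I(L_\ell)$ because $ab$ always sits in a degree exceeding $n$, so $A^k=0$. The non-degeneracy of the pairing $A^k\times A^{n-k}\to A^n$ is then exactly the non-degeneracy of the descended Frobenius form restricted to graded pieces.

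The only step that is not essentially formal is the downward direction $I(L_\ell)\subseteq J$ in the first half: it is where the hypothesis that $A$ is $n$-self-dual is genuinely used, via the non-degeneracy of the graded pairing together with the surjectivity of $\rho$. Everything else reduces to unwinding the definitions of an $n$-homogeneous linear function, of $I(L_\ell)$ as the kernel of $F_\ell$, and of the grading on $B$. No new algebraic machinery beyond Frobenius forms and graded non-degeneracy is required.
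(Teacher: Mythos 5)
Your proof is correct, and it takes the standard Frobenius-form route that the cited sources ([hof2020], [KhoM]) use; the paper itself states this theorem without proof, so there is no in-text argument to compare against. Both directions are sound: for $J\subseteq I(L_\ell)$ and $I(L_\ell)\subseteq J$ you correctly use the graded surjectivity of $\rho$ together with the non-degeneracy of the pairing $A^i\times A^{n-i}\to A^n$, and the case split by degree (testing $b_n$ against $e$, $b_i$ for $i<n$ against lifts of $A^{n-i}$, and $b_i$ for $i>n$ against the vanishing of $A^i$) is exactly what is needed. The ``moreover'' half is also right.

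One small omission worth filling in: in the ``moreover'' direction you implicitly treat $A=B/I(L_\ell)$ as a graded algebra (for instance when you compute $A^n\cong B^n/L_\ell$ and when you speak of the pairing $A^k\times A^{n-k}\to A^n$), but you never verify that $I(L_\ell)$ is a homogeneous ideal. This is a one-line degree check --- if $b=\sum_i b_i\in I(L_\ell)$ then for any $a\in B$ and any $i$, $\ell(ab_i)=\ell(a_{n-i}b_i)=\ell(a_{n-i}b)=0$, so each $b_i\in I(L_\ell)$ --- and it should be stated, since quoting the paper's preceding assertion that ``$I(L_\ell)$ is an $n$-sd ideal'' would be circular (that assertion is essentially what the ``moreover'' clause asks you to prove). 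Using only the weaker fact that $I(L_\ell)$ equals the kernel of $F_\ell$, as you do, is fine and not circular.
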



\subsection{The even-dimensional case}

In this subsection, we are going to introduce the algebraic results we need in order to obtain a more convenient way to describe the subring $A^*(E_X)$ of all even-dimensional classes in the cohomology ring $H^*(E_X)$ of a quasitoric bundle $E_X$ with an even-dimensional base $B$. Our presentation follows that in~\cite{KhoM}.

Let $\K$ be a field of zero characteristic, in this paper we assume $\K=\Q,\R$, or $\C$. Throughout this subsection $A$ will be a $2n$-self-dual graded-commutative algebra over $\K$ with vanishing odd-degree part (therefore, $A$ is a commutative algebra in the usual sense). 

Let $W\subset A$ be a vector subspace, which multiplicatively generates $A$. 
Then $W$ has a natural graded vector space structure: $W=\oplus_{k=0}^{n}W_{2k}$, where $W_{2k}:=W\cap A^{2k}$.

Let $\pi\colon V\to \oplus_{k=1}^{n}W_{2k}$ be a surjective linear mapping for a finite-dimensional vector space $V$. This makes $V$ into a graded vector space via $V=\oplus_{k=1}^{n}V_{2k}$, where $V_{2k}:=\pi^{-1}(W_{2k})$.

Observe that $\pi$ extends to an epimorphism of algebras
$$
\pi\colon \Sym(V)\to A.
$$

Furthermore, the grading on $V$ gives rise to a grading on $\Sym(V)$:
$$
\deg(v_1^{i_1}\ldots v_{n}^{i_n})=2\sum\limits_{k=1}^{n}ki_k,\text{ for any }v_k\in V_{2k},
$$
making $\pi$ into a graded morphism.

Firstly, we are going to describe the ideal $I_\ell\subset\Sym(V)$ such that
$$
A\simeq\Sym(V)/I_\ell.
$$
Here, $\ell$ is defined on $\Sym(V)$ as a composition of maps:
$$
\ell=\alpha\circ\pi\colon\Sym(V)\to\K,
$$
where $\alpha\colon A\to\K$ is a $2n$-homogeneous linear function such that $\alpha\colon A^{2n}\simeq\K$. 
Note that $\ell$ is a $2n$-quasi-homogeneous linear function.

Let us denote by $I_\ell$ the following ideal in $\Sym(V)$:
$$
I_\ell=\{P\in\Sym(V)\,|\,\ell(P\cdot Q)=0\text{ for any }Q\in\Sym(V)\}.
$$

By Theorem~\ref{thm-nsdquatient} (see also \cite{kazarnovskii2021newton} for similar results), the kernel of the graded epimorphism of algebras $\pi\colon\Sym(V)\to A$ is given by the ideal just defined: $\mathrm{ker}(\pi)=I_\ell$, and hence
$$
A\simeq\Sym(V)/I_\ell.
$$
Below we describe the ideal $I_\ell$ in an alternative  way, which is more useful in some applications.

Observe that the symmetric algebra $\Sym(V)$ can be identified with the space $\Diff(V)$ of differential operators with constant coefficients on $V$ as follows. The zero degree component $\Sym_0(V)\simeq\K$ corresponds to operators of multiplication by a number. For positive degree elements one can define a map
$$
\mathcal D\colon v_1\ldots v_k\mapsto L_{v_1}\ldots L_{v_k},
$$
where $L_v$ is the Gateaux derivative in the direction $v\in V$. For an element $P\in\Sym(V)$ we denote by $\mathcal D_P\in\Diff(V)$ the corresponding differential operator.

Define a map
$$
\Exp^*\colon\Sym(V)^{\vee}\to\K[[V]],\quad\Exp^*\colon\alpha\mapsto\sum\limits_{i=0}^{\infty}\alpha\left(\frac{x^i}{i!}\right).
$$
It turns out that $\Exp^*(\ell)$ is a quasi-homogeneous polynomial of degree $2n$ for the $2n$-quasi-homogeneous linear function $\ell$.

Now we are ready to give an alternative description for the ideal $I_\ell$ in $\Sym(V)$.

\begin{theorem}\label{potential}
Let $A$ be a $2n$-self-dual graded-commutative algebra over $\K$ with vanishing odd-degree part. Let $V$ be a finite-dimensional graded vector space over $\K$ and $\pi\colon\Sym(V)\to A$ be a graded epimorphism of algebras.

Suppose $\ell\colon \Sym(V)\to\K$ is a $2n$-quasi-homogeneous linear function. Then we have
$$
A\simeq\Sym(V)/\Ann(\Exp^*(\ell)),
$$
where 
$$
\Ann(\Exp^*(\ell)):=\{P\in\Sym(V)\,|\,\mathcal D_P(\Exp^*(\ell))=0\}.
$$
\end{theorem}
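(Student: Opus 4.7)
The plan is to use Theorem~\ref{thm-nsdquatient} as the skeleton and reduce Theorem~\ref{potential} to an identification of two ideals in $\Sym(V)$. Under the standing setup $\ell=\alpha\circ\pi$ with $\alpha\colon A^{2n}\xrightarrow{\sim}\K$, Theorem~\ref{thm-nsdquatient} applied to $B=\Sym(V)$ already yields $A\simeq\Sym(V)/I_\ell$, where $I_\ell=\{P\in\Sym(V)\mid \ell(P\cdot Q)=0\text{ for all }Q\in\Sym(V)\}$. Thus the entire content of the theorem is the equality of ideals
$$
I_\ell \;=\; \Ann(\Exp^*(\ell)).
$$

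The core step is the \emph{shift identity}
$$
\mathcal{D}_P\bigl(\Exp^*(\ell)\bigr) \;=\; \Exp^*(\ell_P), \qquad \ell_P(Q):=\ell(P\cdot Q),
$$
valid for every $P\in\Sym(V)$. I would prove it first for $P=v\in V$. Since $\Sym(V)$ is commutative, expanding $(w+tv)^i$ by the binomial theorem in $\Sym(V)\otimes\K[t]$ and differentiating $\Exp^*(\ell)(w)=\sum_{i\ge 0}\ell(w^i/i!)$ at $t=0$ gives
$$
L_v\bigl(\Exp^*(\ell)\bigr)(w)\;=\;\sum_{i\ge 1}\ell\!\left(\frac{i\,v\,w^{i-1}}{i!}\right)\;=\;\sum_{j\ge 0}\ell_v\!\left(\frac{w^j}{j!}\right)\;=\;\Exp^*(\ell_v)(w).
$$
The general case then follows by iteration (for $v_1,\dots,v_k\in V$ one checks $(\ell_{v_k})_{v_{k-1}\cdots v_1}=\ell_{v_1\cdots v_k}$) and linearity extends the identity to all of $\Sym(V)$. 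Quasi-homogeneity of $\ell$ guarantees that every sum involved terminates.

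With the shift identity in hand, it remains to observe that $\Exp^*$ is injective on linear functionals: fixing a basis $e_1,\dots,e_m$ of $V$, one reads off that the coefficient of $w^\alpha$ in $\Exp^*(\beta)(w)$ is $\beta(e^\alpha)/\alpha!$, so $\Exp^*(\beta)=0$ forces $\beta$ to vanish on all monomials and hence on $\Sym(V)$. Combining these two observations,
$$
P\in\Ann(\Exp^*(\ell))\iff \Exp^*(\ell_P)=0\iff \ell_P=0\iff P\in I_\ell,
$$
which is the required identification of ideals, and the theorem follows. I do not anticipate a genuine obstacle here; the main delicacy is keeping the identification $\Sym(V)\simeq\Diff(V)$ consistent with the pairing $P\otimes Q\mapsto \mathcal{D}_P(Q)|_{0}$ between symmetric tensors and constant-coefficient differential operators, so that the shift identity above is both the expected Leibniz rule and the classical statement that $\Exp^*(\ell)$ serves as a Macaulay inverse system for $I_\ell$.
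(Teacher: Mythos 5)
The paper itself states Theorem~\ref{potential} without proof, citing \cite{KhoM}; the surrounding exposition only signals the intended strategy (``Below we describe the ideal $I_\ell$ in an alternative way''), namely to identify $\Ann(\Exp^*(\ell))$ with the ideal $I_\ell$ obtained from Theorem~\ref{thm-nsdquatient}. Your proposal carries out exactly this identification and is correct: the shift identity $\mathcal{D}_P(\Exp^*(\ell))=\Exp^*(\ell_P)$ (proved first for $P=v\in V$, then iterated using $(\ell_{v_1})_{v_2}=\ell_{v_1v_2}$ and extended by linearity) combined with injectivity of $\Exp^*$ on linear functionals (read off by matching coefficients $\beta(e^\alpha)/\alpha!$) gives $\Ann(\Exp^*(\ell))=I_\ell$, and Theorem~\ref{thm-nsdquatient} finishes. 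Two small points worth flagging: you rightly read ``$\ell$ is a $2n$-quasi-homogeneous linear function'' together with the preceding paragraph's normalization $\ell=\alpha\circ\pi$ with $\alpha\colon A^{2n}\xrightarrow{\sim}\K$ (the bare statement would be false for arbitrary $\ell$); and the injectivity argument implicitly uses that $\Exp^*$ lands in formal power series, which determine the functional coefficientwise — quasi-homogeneity is only needed to ensure $\Exp^*(\ell)$ and each $\Exp^*(\ell_P)$ are actual polynomials so that $\mathcal{D}_P$ and the computations make pointwise sense. With those understood, the argument is the standard Macaulay inverse system argument and matches what the cited source must do.
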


We call the quasi-homogeneous polynomial $\Exp^*(\ell)$ the \emph{potential} of $A$ and denote it by $P_A$.


\section{Applications}
\label{sec:applications}


\subsection{Pukhlikov-Khovanskii description of cohomology ring of a quasitoric bundle}\label{sec:cohtoricbundle}

In this section, we apply the results from Sections~\ref{BKKproof} and~\ref{commalg} to give a Pukhlikov-Khovanskii-type description of the cohomology ring of a quasitoric bundle that generalizes the descriptions obtained in~\cite{ayzenberg2016volume} for quasitoric manifolds and in~\cite{hof2020} for toric bundles.

Let, as before, $p\colon E\to B$ be a principal $T$-bundle, $\Sigma$ a complete simplicial fan with $\Sigma(1) = \{\rho_1, \ldots, \rho_r\}$, $X = X_{\Sigma,\Lambda}$ a generalized quasitoric manifold, and $E_X$ the corresponding quasitoric bundle.

By the Leray-Hirsch theorem (see Theorem~\ref{thm:Leray-Hirsch}) the cohomology ring $H^*(E_X,\R)$ is a quotient of the polynomial algebra $R[x_1,\ldots,x_r]$, where $R=H^*(B,\R)$ is the cohomology ring of the base.
\begin{theorem}
  \label{cohbundle}
  In the notation from above, the cohomology ring of $E_X$ is given by
  \[
    H^*(E_X,\R) \simeq R[x_1,\ldots,x_r]/I(L_\ell)
  \]
 where $\ell \colon R[x_1,\ldots,x_r]\to \R$ is a $(k+2n)$-homogeneous linear function defined by: 
  \[
    \ell(\gamma \cdot x_{i_1} \cdots x_{i_s}) = I_\gamma(\rho_{i_1},\ldots, \rho_{i_s})
  \]
  for any monomial $\gamma\cdot x_{i_1}\cdots x_{i_s}$ with $\deg(\gamma)+2s=k+2n$.
\end{theorem}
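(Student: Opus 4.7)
The strategy is to realize $H^*(E_X,\R)$ as a $(k+2n)$-self-dual quotient of $R[x_1,\ldots,x_r]$ and then invoke Theorem~\ref{thm-nsdquatient} together with the BKK-type Theorem~\ref{BKK} to pin down the defining linear function.

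First I would introduce the graded algebra homomorphism
\[
  \phi\colon R[x_1,\ldots,x_r]\to H^*(E_X,\R),\qquad x_i\mapsto [D_i],\quad \gamma\mapsto p^*(\gamma),
\]
and check surjectivity using Corollary~\ref{cor:cohmodule} (Leray--Hirsch) together with Theorem~\ref{cohgenqtrmfd}, which says that the cohomology ring of a generalized quasitoric manifold is generated as an algebra by the classes $[D_i]$ dual to codimension-$2$ characteristic submanifolds. Since $E_X$ is a closed oriented smooth manifold of real dimension $k+2n$, Poincar\'e duality makes $H^*(E_X,\R)=R[x_1,\ldots,x_r]/\ker\phi$ into a $(k+2n)$-self-dual graded commutative $\R$-algebra.

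By Theorem~\ref{thm-nsdquatient}, there is a $(k+2n)$-homogeneous linear function on $R[x_1,\ldots,x_r]$, unique up to normalization, whose associated ideal $I(L_\ell)$ equals $\ker\phi$; explicitly it is obtained by composing $\phi$ on the top-degree component with the fundamental-class pairing $\langle-,[E_X]\rangle$. For a top-degree monomial $\gamma\cdot x_{i_1}\cdots x_{i_s}$ with $\deg\gamma=k-2i$ and $s=n+i$, this reads
\[
  \bigl\langle p^*(\gamma)\cdot [D_{i_1}]\cdots [D_{i_s}],\,[E_X]\bigr\rangle.
\]
The key step is to identify each ray $\rho_j$ with the multi-polytope whose support function satisfies $h(e_k)=\delta_{jk}$, so that $\rho(\rho_j)=[D_j]$, and then recognize the intersection number above as the value of the polarization of $F_\gamma$ at $(\rho_{i_1},\ldots,\rho_{i_s})$. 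Applying the polarized form of Theorem~\ref{BKK}, which asserts that the polarizations of $F_\gamma$ and $I_\gamma$ are proportional, one converts this into $I_\gamma(\rho_{i_1},\ldots,\rho_{i_s})$ and so matches the canonical linear function coming from Theorem~\ref{thm-nsdquatient} with the $\ell$ defined in the statement.

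The main technical difficulty lies in this last matching: the BKK proportionality constant $(n+i)!/i!$ depends on $i$, so one has to verify carefully that the function $\ell$ specified in the statement (possibly after the rescaling that Theorem~\ref{thm-nsdquatient} allows) defines the same ideal $I(L_\ell)$ as the canonical linear function obtained by pairing with $[E_X]$. Once this matching is secured, the isomorphism $H^*(E_X,\R)\simeq R[x_1,\ldots,x_r]/I(L_\ell)$ follows immediately from Theorem~\ref{thm-nsdquatient}.
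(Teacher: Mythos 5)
Your strategy is the same as the paper's: use Leray--Hirsch and Poincar\'e duality to realize $H^*(E_X,\R)$ as a $(k+2n)$-self-dual quotient of $R[x_1,\ldots,x_r]$, invoke Theorem~\ref{thm-nsdquatient} to see that the defining ideal is $I(L_{\ell_{\mathrm{can}}})$ for the canonical $\ell_{\mathrm{can}}(b)=\langle\phi(b),[E_X]\rangle$, identify the value of $\ell_{\mathrm{can}}$ on a top-degree monomial with the polarized $F_\gamma$, and then use Theorem~\ref{BKK} to pass to the polarized $I_\gamma$. This is exactly what the paper does, and its proof is even terser than yours (one line: ``the statement follows by Theorem~\ref{BKK}'').

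The normalization worry you flag in your last paragraph is genuine, and in fact your parenthetical ``(possibly after the rescaling that Theorem~\ref{thm-nsdquatient} allows)'' underestimates it. The rescaling allowed by Theorem~\ref{thm-nsdquatient} is a single global scalar (a hyperplane determines $\ell$ only up to one scalar), whereas the discrepancy between $\ell_{\mathrm{can}}$ and the $\ell$ of the statement is the factor $\tfrac{(n+i)!}{i!}=\tfrac{s!}{(s-n)!}$, which depends on $i=s-n$ and hence varies across monomials of the same total degree $k+2n$ when $\dim B>0$. Concretely, with $s=n$ the factor is $n!$ while with $s=n+1$ it is $(n+1)!$, so $\ell$ and $\ell_{\mathrm{can}}$ are not proportional linear functionals on $(R[x_1,\ldots,x_r])^{k+2n}$ and a priori define different ideals $I(L)$. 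To make the matching step go through cleanly one should take $\ell(\gamma\cdot x_{i_1}\cdots x_{i_s})=\tfrac{s!}{(s-n)!}\,I_\gamma(\rho_{i_1},\ldots,\rho_{i_s})$, which by Theorem~\ref{BKK} equals $F_\gamma(\rho_{i_1},\ldots,\rho_{i_s})=\langle p^*(\gamma)\cdot[D_{i_1}]\cdots[D_{i_s}],[E_X]\rangle=\ell_{\mathrm{can}}$. So you have located a real gap, but note that it is a gap inherited from the statement as printed and is not addressed by the paper's proof either; apart from this missing degree-dependent normalization your argument is complete and correct.
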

\begin{proof}
  By Theorem~\ref{thm-nsdquatient}, as $H^*(E_X,\R)$ is a graded commutative $(k+2n)$-self dual factor algebra of $R[x_1,\ldots,x_r]$, it is given by $R[x_1,\ldots,x_r]/I(L_\ell)$ for a $(k+2n)$-homogeneous linear function $\ell$ which is obtained by pairing cohomology classes with the fundamental class of $E_X$.
  Hence the statement follows by Theorem~\ref{BKK}.
\end{proof}

For the next two theorems, let us assume that $\dim(B)=2k$ is even. In this case, $\dim E_X = 2(k+n)$ is also even. Then let us denote by $A^*(B), A^*(E_X)$ the corresponding rings of even cohomology classes: 
\[
A^*(B) = \bigoplus_{i=0}^{2k} H^{2i}(B,\R), \quad A^*(E_X) =  \bigoplus_{i=0}^{2(k+n)} H^{2i}(E_X,\R).
\]
Rings $A^*(B), A^*(E_X)$ are commutative graded rings satisfying Poincar\'e duality. Hence we can use Theorem~\ref{potential} to obtain their presentations.

Denote by $P_B$ the potential of $A^*(B)$ with respect to the generating vector space $A^*(B)$ and by $P_{E_X}$ the potential of $A^*(E_{X})$ with respect to $A^*(B) \oplus \Pm_{\Sigma,\Lambda}$. The following theorem provides a relation between potentials $P_{E_X}$ and $P_B$.

\begin{theorem}\label{thm:torpot}
  For any $\Delta \in \Pm_{\Sigma,\Lambda}$ and $\gamma\in A^*(B)$ we have
  \[
P_{E_{X}}(\gamma,\Delta) = \int_\Delta  P_B(c(\lambda)+\gamma) \diff \lambda.
  \]
\end{theorem}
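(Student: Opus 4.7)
The plan is to unpack both potentials as exponential generating series, apply Theorem~\ref{BKK} term by term, and recognize the resulting sum as the integral of $P_B$. By Theorem~\ref{potential}, since the generating epimorphism sends a pair $(\gamma,\Delta) \in A^*(B) \oplus \Pm_{\Sigma,\Lambda}$ to $p^*\gamma + \rho(\Delta) \in A^*(E_X)$, the potential of $E_X$ is
$$
P_{E_X}(\gamma,\Delta) = \sum_{m \ge 0} \frac{1}{m!} \bigl\langle (p^*\gamma + \rho(\Delta))^m, [E_X]\bigr\rangle,
$$
and similarly $P_B(\eta) = \sum_m \tfrac{1}{m!}\langle \eta^m,[B]\rangle$, with only the top-degree summand contributing to each pairing.

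First I would binomially expand $(p^*\gamma + \rho(\Delta))^m$, reindex the sum by $a$ and $j$ (the exponents of $p^*\gamma$ and $\rho(\Delta)$, respectively), and use multiplicativity of $p^*$ to obtain
$$
P_{E_X}(\gamma,\Delta) = \sum_{a,j \ge 0} \frac{1}{a!\,j!} \bigl\langle p^*(\gamma^a)\cdot \rho(\Delta)^j,[E_X]\bigr\rangle.
$$
Next, matching degrees shows that only terms with $j = n+i$ paired against the degree $2(k-i)$ component of $\gamma^a$ can contribute to the fundamental-class pairing on $E_X$ (real dimension $2(k+n)$). For each such pair, applying Theorem~\ref{BKK} with cohomology class $(\gamma^a)_{2(k-i)} \in H^{2(k-i)}(B,\R)$ gives
$$
\bigl\langle p^*(\gamma^a)\cdot \rho(\Delta)^{n+i},[E_X]\bigr\rangle = \frac{(n+i)!}{i!}\int_\Delta \langle c(\lambda)^i \cdot \gamma^a,[B]\rangle\,d\lambda,
$$
where one may keep all of $\gamma^a$ inside the $B$-pairing (not just its degree $2(k-i)$ part), since $\langle\cdot,[B]\rangle$ automatically selects the top-degree contribution.

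The decisive cancellation is $\tfrac{1}{(n+i)!}\cdot \tfrac{(n+i)!}{i!}=\tfrac{1}{i!}$; substituting this into the double sum and swapping it with the integral yields
$$
P_{E_X}(\gamma,\Delta) = \int_\Delta \sum_{a,i \ge 0} \frac{1}{a!\,i!}\langle c(\lambda)^i \gamma^a,[B]\rangle\,d\lambda,
$$
and the binomial identity $\sum_{a+i=m}\binom{m}{a}c(\lambda)^i\gamma^a = (c(\lambda)+\gamma)^m$ collapses the inner double sum to $\sum_m \tfrac{1}{m!}\langle(c(\lambda)+\gamma)^m,[B]\rangle = P_B(c(\lambda)+\gamma)$, which is the desired identity. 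The main obstacle will be bookkeeping: carefully tracking degree-matching so that the correct instance of Theorem~\ref{BKK} applies to each summand, and verifying that all factorials cancel exactly to reassemble the exponential series defining $P_B$. A secondary technical point is making precise that the integrand on the right-hand side is a cohomology-valued polynomial on $M_\R$ whose various graded pieces all contribute through their pairings with $[B]$ of the appropriate codegree.
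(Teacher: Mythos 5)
Your proof is correct and follows essentially the same strategy as the paper's: expand the potential $P_{E_X}$ as an exponential series, apply the \BKK{} theorem term by term, and reassemble the result into $P_B$ via the binomial theorem. The only cosmetic difference is that the paper keeps $\exp(\gamma)$ bundled and invokes the ``horizontal part'' reformulation (Theorem~\ref{BKK1}) of the \BKK{} theorem, whereas you expand both factors and apply Theorem~\ref{BKK} directly to the homogeneous pieces $(\gamma^a)_{2(k-i)}$; since Theorems~\ref{BKK} and~\ref{BKK1} are equivalent, the two arguments are the same in substance.
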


\begin{proof}
The proof relies on Theorem~\ref{BKK1} and will be obtain by a direct calculation. 
Recall that by Theorem~\ref{potential}, the potentials $P_{E_X}, P_B$ are given by
\[
P_{E_X}= \Exp^*\langle [E_X], \cdot \rangle,\quad P_B= \Exp^*\langle [B], \cdot \rangle
\]
In other words, the evaluations of potentials $P_{E_X}$ and $P_B$ on elements of $A^*(B) \oplus \Pm_{\Sigma,\Lambda}$ and $A^*(B)$ respectively are given by the following formulas 
\[
P_{E_X}(\gamma,\Delta) = \left\langle [E_X],\sum_{j=0}^{\infty} \frac{(p^*(\gamma)+\rho(\Delta))^{j}}{j!}\right\rangle, \quad P_B(\gamma) =  \left\langle [B],\sum_{j=0}^{\infty} \frac{\gamma^{j}}{j!}\right\rangle
\]
By definition of horizontal parts $h_i(\Delta)$ of $\Delta$ we have
  \[
P_{E_X}(\gamma,\Delta) =   \left\langle [E_X], p^*(\exp(\gamma)) \cdot \sum_{j=0}^{\infty} \frac{\rho(\Delta)^{j}}{j!}\right\rangle =  \left\langle [B],\exp(\gamma) \cdot \sum_{j=0}^{\infty} \frac{h_j(\Delta)}{j!}\right\rangle.
  \]
Now, by Theorem~\ref{BKK1} we get $h_j(\Delta) = 0$ for $j<n$ and
 \[
    h_{n+i}(\Delta) = \frac{(n+i)!}{i!} \int_\Delta c(\lambda)^i \diff \lambda \text{,}
 \]
otherwise. Hence we obtain
\begin{equation*}
\begin{split}
P_{E_X}(\gamma,\Delta) = \left\langle [B],\exp(\gamma) \cdot \sum_{j=0}^{\infty} \frac{h_j(\Delta)}{j!}\right\rangle &= \left\langle [B],\exp(\gamma) \cdot \sum_{i=0}^{\infty} \frac{h_{n+i}(\Delta)}{(n+i)!}\right\rangle = \\
\left\langle [B],\exp(\gamma) \cdot \sum_{i=0}^{\infty} \frac{1}{i!} \int_\Delta c(\lambda)^i \diff \lambda \right\rangle &= \left\langle [B],\int_\Delta\exp(\gamma+c(\lambda)) \diff \lambda\right\rangle = \\
\int_\Delta \langle [B],\exp(c(\lambda)+\gamma)\rangle \diff \lambda &= \int_\Delta  P_B(c(\lambda)+\gamma) \diff \lambda.
\end{split}    
\end{equation*}
This finishes the proof.
\end{proof}

The next result is a direct generalization of~\cite[Theorem 5.8]{KhoM} to quasitoric bundles.

\begin{theorem}\label{subringevendim}
In the above notation, suppose $\dim B$ is even. Then the ring of even-degree cohomology classes of a quasitoric bundle is given by
$$
A^*(E_X)=\Sym(A^*(B)\oplus\Pm_{\Sigma,\Lambda})/\Ann(P_{E_X}),
$$
where the quasi-homogeneous polynomial $P_{E_X}$ is given by
$$
P_{E_{X}}(\gamma,\Delta) = \int_\Delta  P_B(c(\lambda)+\gamma) \diff \lambda.
$$
\end{theorem}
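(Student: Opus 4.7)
The approach is essentially a direct application of Theorem~\ref{potential} (which describes all $2m$-self-dual graded commutative algebras with vanishing odd part) combined with Theorem~\ref{thm:torpot} (which computes the potential explicitly). Almost no new work is needed beyond verifying the hypotheses of Theorem~\ref{potential}.

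First, I would check that $A^*(E_X)$ is a $2(k+n)$-self-dual graded commutative algebra with vanishing odd-degree part. Since $\dim B = 2k$ is even, $E_X$ is a closed oriented manifold of even real dimension $2(k+n)$, and Poincar\'{e} duality on $H^*(E_X,\R)$ gives a non-degenerate pairing into $H^{2(k+n)}(E_X,\R) \simeq \R$. The restriction to even degrees $H^{2i}(E_X,\R) \times H^{2(k+n)-2i}(E_X,\R)$ is still non-degenerate because the complementary degree $2(k+n)-2i$ is again even, so the even and odd subspaces are orthogonal complements in the full Poincar\'{e} pairing. By definition $A^*(E_X)$ has no odd part, so all hypotheses hold.

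Second, I would construct a graded epimorphism
\[
\pi: \Sym(A^*(B) \oplus \Pm_{\Sigma,\Lambda}) \to A^*(E_X), \qquad \gamma \mapsto p^*(\gamma), \quad \Delta \mapsto \rho(\Delta),
\]
placing $\Pm_{\Sigma,\Lambda}$ in grading degree $2$ so that $\pi$ is graded. Surjectivity follows from the Leray--Hirsch Theorem~\ref{thm:Leray-Hirsch} together with the Stanley--Reisner description in Theorem~\ref{cohgenqtrmfd}: Leray--Hirsch identifies $H^*(E_X,\R)$ as a free module over $H^*(B,\R)$ generated by classes restricting to a basis of $H^*(X_{\Sigma,\Lambda},\R)$, while Theorem~\ref{cohgenqtrmfd} shows that $H^*(X_{\Sigma,\Lambda},\R)$ is multiplicatively generated by the classes $[D_1],\dots,[D_s]$, which are images of standard elements of $\Pm_{\Sigma,\Lambda}$ under $\rho$. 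Hence $p^*(A^*(B))$ together with $\rho(\Pm_{\Sigma,\Lambda})$ multiplicatively generate $A^*(E_X)$.

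With these in hand, I would apply Theorem~\ref{potential} to the epimorphism $\pi$ and the $2(k+n)$-quasi-homogeneous linear function
\[
\ell: \Sym(A^*(B) \oplus \Pm_{\Sigma,\Lambda}) \to \R, \qquad \ell(Q) = \langle \pi(Q), [E_X]\rangle,
\]
which immediately yields $A^*(E_X) \simeq \Sym(A^*(B) \oplus \Pm_{\Sigma,\Lambda}) / \Ann(P_{E_X})$ with $P_{E_X} = \Exp^*(\ell)$. The final step is to invoke Theorem~\ref{thm:torpot}, which identifies this potential with $\int_\Delta P_B(c(\lambda)+\gamma)\,d\lambda$. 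The only mildly subtle point in the whole argument is the verification that Poincar\'{e} duality descends to the even subring; all other ingredients are either formal or have been done in the preceding sections, so I expect no real obstacle.
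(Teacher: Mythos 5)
Your proposal is correct and follows exactly the same route as the paper: verify the hypotheses of Theorem~\ref{potential} (even self-duality of $A^*(E_X)$, the graded epimorphism from $\Sym(A^*(B)\oplus\Pm_{\Sigma,\Lambda})$ via $p^*$ and $\rho$), apply that theorem, and then substitute the formula from Theorem~\ref{thm:torpot} for the potential. The paper compresses all this into a single sentence, so your write-up is simply a more explicit version of the intended argument.
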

\begin{proof}
The proof goes by a direct application of Theorem~\ref{potential} and Theorem~\ref{thm:torpot}.  
\end{proof}


\subsection{Stanley-Reisner description of cohomology ring of a quasitoric bundle}

In this subsection we provide a generalization of the Stanley-Reisner description for the cohomology ring to quasitoric bundle case. A  version of Theorem~\ref{thm:SR} which works for more general torus manifold bundles was obtained in \cite{dasgupta2019cohomology}.

\begin{theorem}
  \label{thm:SR}
  Let $E_X$ be a quasitoric bundle associated with a generalized quasitoric manifold $X=X_{\Sigma,\Lambda}$ as before. 
  Then $H^*(E_X,\R)$ is isomorphic (as an $H^*(B, \R)$-algebra) to the quotient of $H^*(B,\R)[x_1, \ldots, x_r]$ by
  \[
     \rleft( \left\langle x_{j_1} \cdots x_{j_k} \colon \rho_{j_1}, \ldots, \rho_{j_k} \; \text{do not span a cone of} \; \Sigma \right\rangle + \left\langle c\left( \lambda \right) - \sum_{i=1}^n \langle e_i, \lambda \rangle x_i \colon \lambda \in M \right\rangle \rright) \text{.}
  \]
\end{theorem}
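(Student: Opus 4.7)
My plan is to construct a natural $H^*(B,\R)$-algebra map from the free polynomial algebra $H^*(B,\R)[x_1,\ldots,x_r]$ onto $H^*(E_X,\R)$, verify that both families of ideal generators in the theorem lie in its kernel, and then confirm by a graded Nakayama-type dimension count that the induced map on the quotient is an isomorphism. Throughout, $D_i:=E\times_T D_i^X\subseteq E_X$ denotes the codimension-$2$ characteristic submanifold of $E_X$ associated with the ray $\rho_i$.

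I would define
\[
  \psi\colon H^*(B,\R)[x_1,\ldots,x_r]\longrightarrow H^*(E_X,\R),\qquad \gamma\mapsto p^*(\gamma),\quad x_i\mapsto[D_i],
\]
as the unique $H^*(B,\R)$-algebra extension. The Stanley-Reisner relations lie in $\ker\psi$ because, in any local trivialization of $p$, the submanifolds $D_{j_1},\ldots,D_{j_k}$ meet transversely and their intersection equals $E\times_T(D_{j_1}^X\cap\cdots\cap D_{j_k}^X)$, which is empty whenever $\rho_{j_1},\ldots,\rho_{j_k}$ fail to span a cone of $\Sigma$. The linear relations lie in $\ker\psi$ by Proposition~\ref{cherneq}: viewing $\lambda\in M$ as a multi-polytope with support function $h_\lambda(v)=\langle v,\lambda\rangle$, we have
\[
  p^*c(\lambda)=\rho(\lambda)=\sum_{i=1}^r h_\lambda(e_i)[D_i]=\sum_{i=1}^r\langle e_i,\lambda\rangle[D_i].
\]
Thus $\psi$ descends to a map $\bar\psi\colon Q\to H^*(E_X,\R)$, where $Q$ denotes the quotient in the statement. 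Surjectivity of $\bar\psi$ follows from Leray--Hirsch (Corollary~\ref{cor:cohmodule}) combined with Theorem~\ref{cohgenqtrmfd}: the fiber ring $H^*(X,\R)$ is generated by the classes $[D_i^X]$, so their global lifts $[D_i]$ together with $p^*H^*(B,\R)$ generate all of $H^*(E_X,\R)$, and these lie in the image of $\bar\psi$.

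For injectivity, let $\mathfrak m\subset H^*(B,\R)$ be the ideal of positive-degree classes. Reducing $Q$ modulo $\mathfrak m$ kills each $c(\lambda)$, turning the linear generator $c(\lambda)-\sum_i\langle e_i,\lambda\rangle x_i$ into $-\sum_i\langle e_i,\lambda\rangle x_i$; hence $Q/\mathfrak m Q$ is precisely the presentation of $H^*(X,\R)$ given by Theorem~\ref{cohgenqtrmfd}, and in particular $\dim_\R Q/\mathfrak m Q=\dim_\R H^*(X,\R)$. Because $H^*(B,\R)$ is a graded connected finite-dimensional $\R$-algebra, the ideal $\mathfrak m$ is nilpotent, so the graded Nakayama lemma promotes homogeneous lifts of an $\R$-basis of $Q/\mathfrak m Q$ to a set of $H^*(B,\R)$-module generators of $Q$. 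This yields
\[
  \dim_\R Q\leq \dim_\R H^*(B,\R)\cdot\dim_\R H^*(X,\R)=\dim_\R H^*(E_X,\R),
\]
with the last equality again by Leray--Hirsch, and combined with the surjectivity of $\bar\psi$ this forces $\bar\psi$ to be an isomorphism.

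The main obstacle is the injectivity step, where one must rule out unexpected relations coming from the interaction between the Stanley-Reisner ideal and the $c(\lambda)$-twist in $J_\Lambda$. The Nakayama reduction handles this cleanly, but it depends on correctly identifying $Q/\mathfrak m Q$ with the fiberwise Stanley-Reisner presentation of $H^*(X,\R)$; this identification is transparent from the explicit linear form of the $J_\Lambda$ generators, yet it is the pivot on which the whole argument turns.
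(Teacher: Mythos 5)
The proposal is correct and is logically complete. You define the natural $H^*(B,\R)$-algebra map $\psi$ sending $x_i\mapsto[D_i]$, check that the Stanley--Reisner monomials lie in $\ker\psi$ because the corresponding characteristic submanifolds of $E_X$ have empty intersection, check that the linear relations lie in $\ker\psi$ via Proposition~\ref{cherneq}, and then close the argument with a Leray--Hirsch plus graded Nakayama dimension count, using Theorem~\ref{cohgenqtrmfd} to identify $Q/\mathfrak m Q$ with $H^*(X,\R)$. The only place that would benefit from one extra sentence is the surjectivity claim: Leray--Hirsch gives a free $H^*(B,\R)$-basis $u_1,\dots,u_r$ with $\ker i^*=\mathfrak m\cdot H^*(E_X,\R)$, so a short Nakayama step is needed to pass from ``$i^*$ restricted to $\operatorname{im}\psi$ is onto $H^*(X,\R)$'' to ``$\operatorname{im}\psi=H^*(E_X,\R)$'' — but this is routine and clearly fill-in-able.

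Your route is genuinely different from the paper's. The paper delegates the proof to \cite[Theorem~3.5]{hof2020}, and its introduction makes clear that the Stanley--Reisner description is deduced from the Pukhlikov--Khovanskii description (Theorem~\ref{cohbundle}), which rests on the generalized BKK Theorem~\ref{BKK} and the self-dual-ideal machinery (Theorem~\ref{thm-nsdquatient}); in that route, the containment of the two ideal families inside $I(L_\ell)$ is verified through the derivative computations of Lemmas~\ref{Ider} and~\ref{Fder}. Your argument bypasses all of this: it uses only Leray--Hirsch, the known fiber cohomology, the Chern class identity of Proposition~\ref{cherneq}, and graded Nakayama. What your approach buys is self-containment and elementariness — no mixed integrals or volume polynomials are needed. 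What the paper's route buys is a single uniform framework in which the volume-polynomial, Stanley--Reisner, and Brion presentations all drop out of the same BKK-type computation.
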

\begin{proof}
The proof is identical to the proof of \cite[Theorem 3.5]{hof2020}.
\end{proof}

Note the similarities with the Stanley-Reisner description of the cohomology ring of toric varieties and quasitoric manifolds.
Indeed, the first ideal in Theorem~\ref{thm:SR} corresponds to the Stanley-Reisner ideal of the corresponding toric variety.


\subsection{Brion description of cohomology ring of a quasitoric bundle}

In this subsection we give a proof of a generalization of the Brion description for a cohomology ring to the quasitoric bundle case.

\begin{theorem}
  \label{thm:Br}
  Let $E_X$ be a quasitoric bundle associated with a generalized quasitoric manifold $X=X_{\Sigma,\Lambda}$ as before. 
  Then $H^*(E_X,\R)$ is isomorphic (as an $H^*(B,\R)$-algebra) to the quotient of $H^*(B,\R)\otimes\PP_{\Sigma,\Lambda}$ by the ideal
 $$
 \langle c\left( \lambda \right)\otimes 1 - 1\otimes\langle \cdot, \lambda \rangle \colon \lambda \in M\rangle,
 $$
 where $\langle\cdot,\lambda\rangle\in\PP^{1}_{\Sigma,\Lambda}$ is a ($\Lambda$--piecewise) linear function on $\Sigma$.
 \end{theorem}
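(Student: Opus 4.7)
The plan is to deduce Theorem~\ref{thm:Br} from the Stanley-Reisner description already established in Theorem~\ref{thm:SR} by using Theorems~\ref{cohgenqtrmfd} and~\ref{thm:Brion} to identify the Stanley-Reisner face ring with the piecewise polynomial ring, and then translating the linear relations appropriately.

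First, I would combine Theorem~\ref{cohgenqtrmfd} and Theorem~\ref{thm:Brion} to obtain an isomorphism of graded $\R$-algebras
\[
  \R[x_1,\ldots,x_r]/I_{SR} \xrightarrow{\sim} H^*_T(X_{\Sigma,\Lambda},\R) \xrightarrow{\sim} \PP_{\Sigma,\Lambda},
\]
where $I_{SR}$ denotes the Stanley-Reisner ideal of $K_\Sigma$. Under this composition the generator $x_i$ is sent to the equivariant class $[D_i]_T$ of the corresponding codimension-two characteristic submanifold, which under the GKM-type isomorphism of Theorem~\ref{thm:Brion} corresponds to the Courant function $\phi_i \in \PP^1_{\Sigma,\Lambda}$: the unique $\Lambda$-piecewise linear function with $\phi_i(e_i)=1$ and $\phi_i(e_j)=0$ for $j \ne i$. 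The characteristic hypothesis on $\Lambda$ is exactly what is needed to make $\phi_i$ well-defined on each maximal cone, since on $\sigma = \langle \rho_{j_1},\ldots,\rho_{j_n}\rangle$ the vectors $\Lambda(\rho_{j_l})$ form a basis of $N_\R$, and the prescribed values at the $e_{j_l}$ then extend uniquely to a linear function on $N_\R$ whose pullback through $\Lambda$ is $\phi_i|_\sigma$.

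Next I would observe that, under this isomorphism, the linear combination $\sum_{i=1}^r \langle e_i,\lambda \rangle x_i$ corresponds to the global linear function $\langle \cdot, \lambda \rangle \in \PP^1_{\Sigma,\Lambda}$: both sides are $\Lambda$-piecewise linear functions taking value $\langle e_j,\lambda \rangle$ at each ray generator $e_j$, and a $\Lambda$-piecewise linear function is determined by its values on the rays.

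Tensoring with $H^*(B,\R)$ over $\R$ then yields an isomorphism of $H^*(B,\R)$-algebras
\[
  H^*(B,\R)[x_1,\ldots,x_r]/I_{SR} \xrightarrow{\sim} H^*(B,\R) \otimes \PP_{\Sigma,\Lambda},
\]
under which the linear relations $c(\lambda) - \sum_i \langle e_i,\lambda \rangle x_i$ of Theorem~\ref{thm:SR} correspond exactly to the relations $c(\lambda) \otimes 1 - 1 \otimes \langle \cdot, \lambda \rangle$ appearing in the statement. Passing to the quotient and invoking Theorem~\ref{thm:SR} completes the proof. The main obstacle is correctly matching the generators under the composite isomorphism $\R[x_1,\ldots,x_r]/I_{SR} \simeq \PP_{\Sigma,\Lambda}$: one must verify that $x_i$, representing the equivariant Poincar\'{e} dual of $D_i$ in the Stanley-Reisner picture, really corresponds to the Courant function $\phi_i$ on the piecewise polynomial side. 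This is a localization computation which is standard in the smooth toric case but requires the characteristic condition on $\Lambda$ in this quasitoric generalization.
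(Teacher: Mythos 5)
Your proposal is correct and follows essentially the same route as the paper: identify $\R[\Sigma]\simeq H^*_T(X;\R)\simeq\PP_{\Sigma,\Lambda}$ via Theorems~\ref{cohgenqtrmfd}(1) and~\ref{thm:Brion}, send $x_i$ to the Courant function $\phi_i$, rewrite the linear relations, and then apply Theorem~\ref{thm:SR}. Your spelling-out of why the $\phi_i$ are well-defined under the characteristic condition and the explicit tensoring step are fuller than what the paper records, but the logical structure is the same.
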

\begin{proof}
For the generalized quasitoric manifold $X$, Theorem~\ref{cohgenqtrmfd} (1) and Theorem~\ref{thm:Brion} imply that
$$
\R[\Sigma]\simeq H^*_{T}(X;\R)\simeq \PP_{\Sigma,\Lambda}.
$$

It is easy to see that the composition isomorphism of $\R$-algebras is given by the mapping $v_i\mapsto \phi_i$, for all $1\leq i\leq s$.
Hence the set of piecewise linear functions $\{\phi_i\colon 1\leq i\leq s\}$ forms a basis of $\PP^{1}_{\Sigma,\Lambda}$. 

Then any $\Lambda$-piecewise linear function $\langle\cdot,\lambda\rangle$ on $\Sigma$ can be uniquely expressed as a linear combination of elements of this basis, namely
$$
\langle\cdot,\lambda\rangle=\sum_{i=1}^n \langle e_i, \lambda \rangle\phi_{i}. 
$$

It remains to apply the Stanley-Reisner description given by Theorem~\ref{thm:SR} proved in the previous subsection.
\end{proof}


\bibliographystyle{alpha}
\bibliography{tbc}
\end{document}